\documentclass[review]{elsarticle}
 \biboptions{sort&compress}
\usepackage[utf8]{inputenc}
  
\usepackage{hyperref}
\usepackage{xcolor}
\usepackage{mathtools,amsfonts,amsthm,amssymb}
\usepackage{subcaption,float}

\newtheorem{thm}{Theorem}[section]
\newtheorem{theorem}[thm]{Theorem}

\newtheorem{lem}[thm]{Lemma}
\newtheorem{lemma}[thm]{Lemma}
\newtheorem{cor}[thm]{Corollary}

\allowdisplaybreaks

\newtheoremstyle{hdef}{1em}{0em}{}{}{\bfseries}{.}{.5em}{\thmname{#1}\thmnumber{ #2}\thmnote{ (\hspace{-.01pt}{#3})}}
\theoremstyle{hdef}

\newtheorem{dfn}[thm]{Definition}

\newtheorem{rem}[thm]{Remark}

\makeatletter
\newtheoremstyle{premark}{1em}{0em}{
\addtolength{\@totalleftmargin}{1.5em}
\addtolength{\linewidth}{-1.5em}
\parshape 1 1.5em \linewidth}{}{\scshape}{.}{.5em}{}
\makeatother

\theoremstyle{premark}


\journal{Journal of \LaTeX\ Templates}









\bibliographystyle{elsarticle-num}

\begin{document}

\begin{frontmatter}

\title{Stieltjes Bochner spaces and applications to the study of parabolic equations\tnoteref{mytitlenote}}
\tnotetext[mytitlenote]{The authors were partially supported by Xunta de Galicia, project ED431C 2019/02, and by projects MTM2015-65570-P and MTM2016-75140-P of MINECO/FEDER (Spain).}

\author{
 Francisco J. Fernández\\
 \normalsize\emph{ e-mail:} fjavier.fernandez@usc.es \\
F. Adri\'an F. Tojo\\
\normalsize\emph{ e-mail:} fernandoadrian.fernandez@usc.es\\
\normalsize \textit{Instituto de Ma\-te\-m\'a\-ti\-cas, Facultade de Matem\'aticas,} \\ \normalsize\textit{Universidade de Santiago de Com\-pos\-te\-la, Spain.}
}





\begin{abstract}
This work is devoted to the mathematical analysis of 
Stieltjes Bochner spaces and their applications to the resolution of a 
parabolic equation with Stieltjes time derivative. This novel formulation 
allows us to study parabolic equations that present impulses at certain 
times or lapses where the system does not evolve at all and presents an 
elliptic behavior. We prove several theoretical results related to existence 
of solution, and propose a full algorithm for its computation, illustrated with 
some realistic numerical examples related to population dynamics.
\end{abstract}

\begin{keyword}
Stieltjes derivative\sep  Bochner spaces \sep partial differential 
equations
\MSC[2010] 28B05 \sep 46G10\sep 35D30\sep 35K65\sep 65N06
\end{keyword}

\end{frontmatter}


\section{Introduction}

The main goal of this work is to analyze the existence of solution of the partial differential equation
\begin{equation} \label{eq:estado1}
\begin{dcases}
u_g' -\nabla \cdot (k_1 \nabla u) + k_2 u = f, & \text{in}\; 
[(0,T)\setminus C_g] \times \Omega, \vspace{0.1cm} \\
u = 0,& \text{on}\; (0,T) \times \partial \Omega, \vspace{0.1cm} \\
u(0,x) = u_0(x), & \text{in}\; \Omega,
\end{dcases}
\end{equation}
where $\Omega \subset \mathbb{R}^3$ is a domain with a smooth enough 
boundary $\partial \Omega$ and $u_g'$ is the Stieltjes derivative in some Banach space $V$ with respect to a left-continuous nondecreasing function
$g:\mathbb{R} \rightarrow \mathbb{R}$. This is, given a function
$u:[0,T] \rightarrow V$, we define for each $t \in [0,T] \setminus C_g$, 
$u_g'(t)$ as the following limit in $V$
in the case it exists:
\begin{equation} \label{eq:gcontinuidad}
u_g'(t):=\begin{dcases}
 \lim_{s\to t} \frac{u(s)-u(t)}{g(s)-g(t)}, & \text{if}\; t \notin D_g, 
\\ 
 \frac{u(t^+)-u(t)}{g(t^+)-g(t)}, & \text{if}\; t \in D_g,
\end{dcases}
\end{equation}
where 
\begin{equation}
D_g=\{s \in \mathbb{R}:\; g(s^+)-g(s)>0\}
\end{equation} 
and 
\begin{equation}
C_g=\{s \in \mathbb{R}:\; 
g \text{ is constant on } (s-\varepsilon,s+\varepsilon) \text{ for some } \varepsilon\in{\mathbb R}^+\}.
\end{equation}

The study of this type of derivatives and its application to the field of ODEs 
appears in \cite{POUSO2015,POUSO2017,POUSO2018,FrigonTojo}. We use the 
notation established in previous works. We further assume that $k_1>0$ is a 
positive constant, $k_2 \geq 0$, $u_0 \in L^2(\Omega)$ and 
$f \in L^2_g([0,T],L^2(\Omega))$, with $([0,T],\mathcal{M}_g,\mu_g)$ a suitable 
measure space associated to $g$ \cite{POUSO2015}. It is important to mention 
 that if $u:A \subset \mathbb{R} 
\rightarrow H$ is $g$-continuous for every $t_0 \in A$ in the sense of:
\begin{equation}
\forall \varepsilon >0\, \exists \delta >0\; :\; \left[t \in A,\; |g(t)-g(t_0)|<\delta \right] 
\Rightarrow \|u(t)-u(t_0)\|_H<\varepsilon,
\end{equation}
then $f$ is constant in the same intervals as $g$ \cite[Proposition 3.2]{POUSO2017}. Moreover, continuity in the previous sense does not imply continuity in the classical sense, but if $g$ is continuous at 
$t_0 \in [0,T]$, then so is $f$ \cite{POUSO2015}. Taking into account that $g$ es left-continuous, we observe that the spaces of bounded $g$-continuous functions ${\mathcal B}{\mathcal C}_g([0,T],L^2(\Omega))$ and ${\mathcal B}{\mathcal C}_g([0,T),L^2(\Omega))$ are basically the same since any function in ${\mathcal B}{\mathcal C}_g([0,T],L^2(\Omega))$ must be continuous at $T$.

 Observe that the 
Stieltjes derivative is not defined at the points of $C_g$. The connected 
components of $C_g$ correspond 
to lapses when our system does not evolve at all and presents an 
elliptic behavior. The set $D_g$ of discontinuities of $g$ correspond 
with times when sudden changes occur and which are usually 
introduced in the form of impulses. Finally, in the remaining set of times 
$[0,T]\setminus (C_g \cup D_g)$ the system presents a parabolic 
behavior and the different slopes of the derivator $g$ (see \cite{POUSO2018}) 
correspond to different influences of the corresponding times, namely, 
the bigger the slope of $g$ the more important the corresponding 
times are for the process. In a certain sense, system~\eqref{eq:estado1} 
can be considered as a degenerate parabolic system.

The main difficulty in the mathematical 
analysis of system~\eqref{eq:estado1} lies in the fact that we cannot 
consider the distributional derivative in time for defining the concept of 
solution. Thence, we will define the solution in terms of its integral 
representation and prove new Lebesgue-type differentiation results 
in order to recover the Stieltjes derivative $g$-almost everywhere in $[0,T]$. 
Results proven in the appendix of \cite{BREZIS1973} suggest  that 
it might be possible to define the concept of $g$-distributional 
derivative, thus proving the relationship between the $g$-absolute continuous 
functions and the $W^{1,1}_g$-type spaces. It is important to mention that 
in the case where $g(t)=t$, we recover the standard derivative, so 
all of the results that we will prove  extend the classical theory.

In this work we will establish the basis 
of the mathematical analysis for system~\eqref{eq:estado1} as well as a 
first numerical approximation of its solution. In order to organize the contents 
of the paper, we will divide the work in the following sections: In Section 2 we 
will introduce the Stieltjes-Bochner spaces in which we will define the concept 
of solution. We will also prove new Lebesgue-differentiation--type results for the 
Stieltjes derivatives and some continuous injections. In Section~3 we will 
define the concept of solution of problem~\eqref{eq:estado1}. In Section 4 
we will prove an existence result for system~\eqref{eq:estado1} that generalizes 
some aspects of the classical theory of parabolic partial differential equations. 
Finally, in Section 5, we will present a realistic example and we will propose a 
numerical scheme. In this example we will have a parabolic-elliptical behavior, 
showing the advantage of considering derivatives of the Stieltjes type.

\section{Stieltjes Bochner spaces}

We start by defining the spaces in which to look for the solution of the problem 
and its fundamental properties. In order to achieve this, and for convenience of 
the reader, we start by reviewing 
some concepts related to Bochner spaces 
\cite{YOSIDA1995,SHOWALTER1997,BREZIS1973,leoni2009}. Let us consider 
the measure space $(\mathbb{R},\mathcal{M}_g,\mu_g)$ induced by 
$g$ \cite{POUSO2015} and $V$ a real Banach space.

\begin{dfn}[$\mathbf g$-measurable functions] Given $f:\mathbb{R} \rightarrow V$ we say:
\begin{itemize}
\item $f$ is a \emph{simple $g$-measurable function} if there exits a finite 
set $\{x_k\}_{k=1}^n \subset V$ such that $A_k=f^{-1}(\{x_k\})\in \mathcal{M}_g$, with 
$\mu_g(A_k)<\infty$ and $f=\sum_{k=1}^n x_k \chi_{A_k}$. In this case, we its 
\emph{integral} as
 \begin{equation}
 \int f(s) \, \operatorname{d} \mu_g(s)=\sum_{k=1}^n x_k \mu_g(A_k) \in V.
 \end{equation}
\item $f$ is a \emph{strongly $g$-measurable function} 
 (or simply $g$-measurable function) if there exists a sequence $\{f_n\}_{n \in \mathbb{N}}$ of simple $g$-measurable functions such that
 $f_n(s)\rightarrow f(s)$ in $V$ for $g$-a.e. $s \in \mathbb{R}$.
 \item $f$ is a \emph{weakly $ g$-measurable function} if $s \in \mathbb{R} 
 \rightarrow v(f(s)) \in \mathbb{R}$ is $g$-measurable for every $v \in V'$.
 \end{itemize}
\end{dfn}

 Pettis' Theorem (cf. \cite[\S V.4, Theorem 1]{YOSIDA1995}) establishes that a function 
$f:\mathbb{R} \rightarrow V$ is strongly $g$-measurable if and only if it is weakly $ g$-measurable and 
$g$-almost separably-valued. Therefore if we consider a separable Banach space $V$ both concepts 
are equivalent.

Now we define the concept of a $g$-integrable $V$-valued function.

\begin{dfn}[$\mathbf g$-integrable $\mathbf V$-valued function] A $g$-measurable function
 $f:\mathbb{R} \rightarrow V$ is said to be a \emph{$g$-integrable $V$-valued function} if there exists a sequence of simple $g$-measurable functions $\{\varphi_n\}_{n \in \mathbb{N}}$ such that $\varphi_n(t)\to f$ in $V$ for $g$-a.e. $t\in\mathbb{R}$ and
 \begin{equation}
 \lim_{n \to \infty} \int \|\varphi_n(s)-f(s)\|_{V} \, \operatorname{d} \mu_g(s)=0.
 \end{equation}
 The integral of $f$ in $B \in \mathcal{M}_g$ is defined as
 \begin{equation}
 \int_B f(s) \, \operatorname{d} \mu_g(s) = \lim_{n \to \infty} \int \chi_B(s) \varphi_n(s) \, 
 \operatorname{d} \mu_g(s) \in V.
 \end{equation}
\end{dfn}

Bochner's Theorem (cf. \cite[\S V.5, Theorem 1]{YOSIDA1995}) allows us to 
characterize the $g$-integrable $V$-valued functions in terms of the $g$-integrability of its norm, 
that is a $g$-measurable function $f:\mathbb{R} \rightarrow V$ is $g$-integrable if and only if 
 \begin{equation}
 \int \|f(s)\|_V \, \operatorname{d}\mu_g(s) < \infty,
 \end{equation}
 and, in such a case,
 \begin{equation}
 \left\|\int_B f(s) \, \operatorname{d} \mu_g(s) \right\|_V \leq \int_B \|f(s)\|_V \, \operatorname{d} \mu_g(s),
 \end{equation}
 for every $B \in \mathcal{M}_g$.

Furthermore, we have the following lemma that we will allow us to establish the concept 
of solution for our problem. From now on, given $v\in V$ and $w\in V'$ we will write $\left<w,v\right>:=w(v)$.

\begin{lemma}[{\cite[\S V.5, Corollary 2]{YOSIDA1995}}] Let $W$ be a Banach space. $T:V\to W$ a bounded linear operator. Then, if $f:\mathbb{R} 
 \rightarrow V$ is $g$-integrable, we have that $T\circ f:\mathbb{R} \rightarrow 
 W$ is $g$-integrable and
 \begin{equation}
 \int_B (T\circ f)(s)\, \operatorname{d} \mu_g(s)=T\left( \int_B f(s) \, \operatorname{d} \mu_g(s)\right) ,\; \forall B \in \mathcal{M}_g.
 \end{equation}
 In particular, for $f:\mathbb{R} 
 \rightarrow V'$ and $v\in V$, 
 \begin{equation}
 \int_B \left<f(s),v\right> \, \operatorname{d} \mu_g(s)=\left<\int_B f(s) \, \operatorname{d} \mu_g(s),v\right>.
 \end{equation}
\end{lemma}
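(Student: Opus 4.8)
The plan is to reduce the statement about the vector-valued integral commuting with a bounded linear operator to the already-established theory of $g$-integration, via the standard three-step approximation argument. First I would recall that since $f:\mathbb{R}\to V$ is $g$-integrable, there is a sequence $\{\varphi_n\}$ of simple $g$-measurable functions with $\varphi_n(s)\to f(s)$ for $g$-a.e.\ $s$ and $\int\|\varphi_n(s)-f(s)\|_V\,\operatorname{d}\mu_g(s)\to 0$. Writing $\varphi_n=\sum_{k=1}^{m_n} x_k^{(n)}\chi_{A_k^{(n)}}$, the composition $T\circ\varphi_n=\sum_k (Tx_k^{(n)})\chi_{A_k^{(n)}}$ is again simple $g$-measurable (the same sets $A_k^{(n)}$, now with values $Tx_k^{(n)}\in W$), and by linearity of the integral of simple functions one has $\int_B (T\circ\varphi_n)\,\operatorname{d}\mu_g = T\bigl(\int_B \varphi_n\,\operatorname{d}\mu_g\bigr)$ directly from the definition.

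Next I would show $T\circ f$ is $g$-integrable with $\{T\circ\varphi_n\}$ as the approximating sequence: pointwise $g$-a.e.\ convergence $T\varphi_n(s)\to Tf(s)$ follows from continuity of $T$, and the $L^1$-type convergence follows from the bound $\|T\varphi_n(s)-Tf(s)\|_W \le \|T\|\,\|\varphi_n(s)-f(s)\|_V$, so that $\int\|T\varphi_n(s)-Tf(s)\|_W\,\operatorname{d}\mu_g(s) \le \|T\|\int\|\varphi_n(s)-f(s)\|_V\,\operatorname{d}\mu_g(s)\to 0$. (One should also note $T\circ f$ is $g$-measurable as an a.e.\ limit of simple $g$-measurable functions, or invoke Bochner's Theorem since $\int\|Tf\|_W\,\operatorname{d}\mu_g \le \|T\|\int\|f\|_V\,\operatorname{d}\mu_g<\infty$.) Then I would pass to the limit in the identity $\int_B (T\circ\varphi_n)\,\operatorname{d}\mu_g = T\bigl(\int_B\varphi_n\,\operatorname{d}\mu_g\bigr)$: the left side converges to $\int_B (T\circ f)\,\operatorname{d}\mu_g$ by definition of the $W$-valued integral, while on the right side $\int_B\varphi_n\,\operatorname{d}\mu_g\to\int_B f\,\operatorname{d}\mu_g$ in $V$ by definition of the $V$-valued integral, and continuity of $T$ gives $T\bigl(\int_B\varphi_n\,\operatorname{d}\mu_g\bigr)\to T\bigl(\int_B f\,\operatorname{d}\mu_g\bigr)$ in $W$. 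Uniqueness of limits yields the claimed equality for every $B\in\mathcal{M}_g$.

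Finally, the ``in particular'' statement is the special case $W=\mathbb{R}$ with the bounded linear operator $T_v:V'\to\mathbb{R}$, $T_v(w):=\langle w,v\rangle$ for fixed $v\in V$, which has norm $\|v\|_V$; applying the first part to $T_v$ and $f:\mathbb{R}\to V'$ gives $\int_B\langle f(s),v\rangle\,\operatorname{d}\mu_g(s)=\langle\int_B f(s)\,\operatorname{d}\mu_g(s),v\rangle$. I do not anticipate a genuine obstacle here: this is essentially a transcription of \cite[\S V.5, Corollary 2]{YOSIDA1995} to the measure space $(\mathbb{R},\mathcal{M}_g,\mu_g)$, and the only point demanding a little care is confirming that the approximation-based definitions of $g$-integrability used in this paper are stable under composition with $T$ and under passage to the limit — which is exactly what the norm estimate above secures.
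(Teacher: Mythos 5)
Your argument is correct and is exactly the standard three-step approximation proof behind this lemma; the paper itself gives no proof, deferring entirely to the cited result in Yosida, whose proof proceeds just as you describe (identity for simple functions by linearity, then passage to the limit using $\|T\varphi_n(s)-Tf(s)\|_W\le\|T\|\,\|\varphi_n(s)-f(s)\|_V$ and continuity of $T$). The only microscopic point to tidy is that, with the paper's definition of a simple $g$-measurable function via preimages of distinct values, a non-injective $T$ may merge values of $\varphi_n$, so one should group the corresponding sets $A_k^{(n)}$ — finite unions of $\mu_g$-finite sets in $\mathcal{M}_g$ — which changes nothing in the computation.
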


\begin{dfn}[$\mathbf{L^p_g}$ spaces] With the usual equivalence relation functions which are equal $g$-a.e., we define, for $1\leq p < \infty$, the space $L^p_g([0,T],V)$ as the set of $g$-measurable functions $f:[0,T]\to V$ such that
 \begin{equation}
 \int_{[0,T)} \|f(s)\|_V^p \, \operatorname{d} \mu_g(s) < \infty. 
 \end{equation}
 Analogously, we define the space $L_g^{\infty}([0,T],V)$ of those functions which are essentially bounded. 
\end{dfn}

\begin{rem} We have that the set $L^p_g([0,T],V)$ with $1\leq p \leq \infty$ is a
 Banach space with the norm
 \begin{equation}
 \|f\|_{L^p_g([0,T],V)}=\left\{\begin{array}{ll}
 \left[\int_{[0,T)} \|f(s)\|_{V}^p\, \operatorname{d} \mu_g(s)\right]^{\frac{1}{p}}, & 1\leq p <\infty, 
 \vspace{0.1cm} \\ 
 \operatorname*{sup~ess}_{t \in [0,T]} \|f(t)\|_{V}, & p=\infty,
 \end{array}\right.
 \end{equation}
\end{rem}
--see \cite[Theorem 8.15]{leoni2009}.

From now on, let $V$ be a real reflexive separable Banach space and let $H$ be a Hilbert space such that $V$ continuously and densely embedded in $H$. Identifying $H$ with its dual $H'$ we have that $V\subset H \equiv H' \subset V'$. 

Now we will adapt \cite[Theorem 2, p. 134]{YOSIDA1995} to our setting (see Theorem~\ref{td}) to guarantee that an indefinite Bochner $g$-integral is
$g$-differentiable. This result will be fundamental in order to recover the 
existence of $g$-derivative $g$-almost everywhere for the solutions of 
problem~\eqref{eq:estado1}. In order to check this we present some previous definitions and results.

\begin{thm}[{\cite[Theorem 2.4]{POUSO2015}}]\label{PoRo} Assume that $f:[0,T]\to\overline {\mathbb R}$ is integrable on $[0,T]$ with respect to $\mu_g$ and consider its indefinite Lebesgue-Stieltjes integral
 \begin{equation}F(t)=\int_{[0, t)} f(s)\, \operatorname{d} \mu_{g}(s) \quad \text { for all } t \in[0,T].\end{equation}
 Then there is a $g$-measurable set $N\subset[0,T]$ such that $\mu_g(N)=0$ and
 \begin{equation}F_{g}^{\prime}(t)=f(t) \quad \text { for all } t \in[0,T] \setminus N.\end{equation}
\end{thm}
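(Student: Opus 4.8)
The plan is to partition $[0,T]$ according to the three behaviours of the derivator $g$ and to treat each separately, collecting all exceptional points into the null set $N$. The flat set $C_g$ is open, hence a countable union of intervals on each of which $g$ is constant; therefore $\mu_g(C_g)=0$, and since the Stieltjes derivative is not even defined there I simply absorb $C_g$ into $N$. Likewise, the (at most countable) set of endpoints of the components of $C_g$ that are continuity points of $g$ is $\mu_g$-null and may be added to $N$. It then remains to verify the identity $F_g'(t)=f(t)$ on the jump set $D_g$ and on the parabolic part $P:=[0,T]\setminus(C_g\cup D_g)$.

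On $D_g$ I expect \emph{exact} equality at every point. Fix $t\in D_g$. Since $f$ is $\mu_g$-integrable and the sets $[0,s)$ decrease to $[0,t]$ as $s\downarrow t$, dominated convergence gives $F(t^+)=\int_{[0,t]}f\,\operatorname{d}\mu_g$, so that
\[
F(t^+)-F(t)=\int_{\{t\}}f\,\operatorname{d}\mu_g=f(t)\,\mu_g(\{t\}).
\]
Because $g$ is left-continuous, the atom satisfies $\mu_g(\{t\})=g(t^+)-g(t)>0$, whence the quotient defining $F_g'(t)$ collapses to $f(t)$. (Integrability forces $f(t)$ to be finite at every atom, so there is no ambiguity.) Thus $F_g'(t)=f(t)$ for all $t\in D_g$.

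On $P$ the function $g$ is continuous and locally nonconstant, so $\mu_g(\{t\})=0$ and, by left-continuity, $g(s)-g(t)=\pm\,\mu_g(I)$ where $I$ is the half-open interval with endpoints $t$ and $s$. The difference quotient is therefore the $\mu_g$-average
\[
\frac{F(s)-F(t)}{g(s)-g(t)}=\frac{1}{\mu_g(I)}\int_{I}f\,\operatorname{d}\mu_g,
\]
and the claim reduces to a Lebesgue-differentiation statement for $\mu_g$: the averages of $f$ over intervals $I$ shrinking to $t$ (from either side) converge to $f(t)$ for $\mu_g$-a.e.\ $t\in P$. I would prove this by the classical density argument. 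For continuous integrands the convergence is immediate. For general $f$, I approximate it in the $\mu_g$-integral norm by a continuous $\phi$ (continuous functions being dense, $\mu_g$ being a finite Borel measure on the compact interval), write $f=\phi+h$, and control the tail $h$ through the maximal operator
\[
Mh(t)=\sup\Big\{\tfrac{1}{\mu_g(I)}\int_{I}|h|\,\operatorname{d}\mu_g:\ I=[t,s)\ \text{or}\ I=[s,t),\ \mu_g(I)>0\Big\}.
\]

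The crux is a weak-type $(1,1)$ bound $\mu_g(\{Mh>\lambda\})\le C\lambda^{-1}\int_{[0,T)}|h|\,\operatorname{d}\mu_g$, which I would deduce from a Vitali-type covering lemma for half-open intervals with respect to $\mu_g$; in one dimension such a covering, with an absolute constant, is available even though $\mu_g$ may be singular. Granting this, the standard estimate shows that for each $\lambda>0$ the set of $t\in P$ where $\limsup_{s\to t}\big|\tfrac{1}{\mu_g(I)}\int_I f\,\operatorname{d}\mu_g-f(t)\big|>\lambda$ is $\mu_g$-null; letting $\lambda\downarrow0$ along a sequence and adding the resulting null set to $N$ completes the proof. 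The main obstacle I anticipate is precisely this covering/maximal-function step: one must justify the Vitali property for intervals \emph{anchored} at the point $t$ against the possibly singular measure $\mu_g$, and check that both one-sided averages converge, so that the genuine two-sided limit in the definition of $F_g'(t)$ exists and equals $f(t)$. The decomposition of $[0,T]$ and the reduction to averages are then routine.
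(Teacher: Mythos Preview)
The paper does not prove this theorem; it is quoted from \cite{POUSO2015} without argument, so there is no in-paper proof to compare your proposal against.

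Your outline is sound. The handling of $C_g$ and of $D_g$ is exactly right (in particular, your observation that integrability forces $f$ to be finite at every atom, so the quotient at a jump point is unambiguous). On the remaining set $P$ your reduction of the difference quotient to the $\mu_g$-average over a half-open interval is correct, and the one obstacle you flag---the weak-$(1,1)$ bound for the one-sided maximal operator relative to $\mu_g$---is genuinely available: in dimension one a greedy selection among half-open intervals gives a Vitali-type lemma with an absolute constant, valid for any locally finite Borel measure, so the standard density argument goes through exactly as you sketch. Your exclusion of the (countable, hence $\mu_g$-null) continuity endpoints of components of $C_g$ is also the right precaution, since at such points the quotient is undefined from one side.

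For comparison, the proof in the cited reference takes a different route: it introduces the pseudo-inverse of $g$ as a change of variable and thereby reduces the statement to the classical Lebesgue differentiation theorem for the ordinary Lebesgue measure, sidestepping any need to redo the maximal-function machinery for $\mu_g$. Your approach is more self-contained; theirs is shorter once the change-of-variables lemma is in hand.
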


\begin{dfn}Let $X$, $Y$ be vector spaces. An operator $L:X\to Y$ is said to be of \emph{finite rank} if $L(X)$ is contained in a finite dimensional vector subspace of $Y$.
\end{dfn}
Observe that any simple $g$-measurable function is of finite rank. The extension of the 
previous theorem to finite rank functions is straightforward, so we have the following theorem.

\begin{thm}[Generalized Lebesgue's differentiation Theorem for finite rank functions] 
 \label{ftc2}
 Let $f:[0,T]\to  V$ is a Bochner $g$-integrable finite rank function 
 and consider its indefinite Lebesgue-Stieltjes integral
 \begin{equation}F(t)=\int_{[0, t)} f(s) \, \operatorname{d} \mu_{g}(s) \in V \quad \text { for all } t \in[0,T].\end{equation}
 Then there is a $g$-measurable set $N\subset[0,T]$ such that $\mu_g(N)=0$ and
 \begin{equation}F_{g}^{\prime}(t)=f(t) \in V\quad \text { for all } t \in[0,T] \setminus N.\end{equation}
\end{thm}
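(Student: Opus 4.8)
The plan is to reduce the vector-valued statement to the scalar Theorem~\ref{PoRo} by exploiting the finite-dimensionality of the range. First I would observe that since $f$ is of finite rank, its image lies in a finite-dimensional subspace $W \subset V$; fix a basis $e_1,\dots,e_m$ of $W$ and let $\pi_1,\dots,\pi_m : W \to \mathbb{R}$ be the associated coordinate functionals, extended (by Hahn–Banach, or simply by a projection onto $W$) to bounded linear functionals on $V$. Then we may write $f(s) = \sum_{i=1}^m f_i(s)\, e_i$ with $f_i(s) = \pi_i(f(s))$. Each $f_i : [0,T] \to \mathbb{R}$ is $g$-measurable, being the composition of the $g$-measurable $f$ with the continuous $\pi_i$, and it is $\mu_g$-integrable on $[0,T]$ because $|f_i(s)| \le \|\pi_i\|\,\|f(s)\|_V$ and $\|f(\cdot)\|_V$ is $\mu_g$-integrable by Bochner's Theorem.

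Next I would apply the Lemma (Corollary 2 from Yosida) with $T = \pi_i$ to commute the functional with the integral, obtaining
\begin{equation}
\pi_i(F(t)) = \pi_i\!\left(\int_{[0,t)} f(s)\,\operatorname{d}\mu_g(s)\right) = \int_{[0,t)} f_i(s)\,\operatorname{d}\mu_g(s) =: F_i(t)
\end{equation}
for every $t \in [0,T]$. By Theorem~\ref{PoRo} applied to each scalar function $f_i$, there is a $g$-null set $N_i \subset [0,T]$ with $(F_i)_g'(t) = f_i(t)$ for all $t \in [0,T]\setminus N_i$. Set $N = \bigcup_{i=1}^m N_i$, which is still $g$-null since the union is finite.

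Finally, for $t \in [0,T]\setminus N$ I would reconstruct the $g$-derivative of $F$ in $V$ from the scalar ones. Since $F(t) = \sum_{i=1}^m F_i(t)\, e_i$ (this identity follows because $F(t) \in W$, as the Bochner integral of a $W$-valued function lies in the closed subspace $W$, so $F(t) = \sum_i \pi_i(F(t)) e_i = \sum_i F_i(t) e_i$), the difference quotient defining $F_g'(t)$ — whether the two-sided limit form or the jump form at a point of $D_g$ — is simply $\sum_{i=1}^m \big(\text{difference quotient of } F_i\big)\, e_i$, and a finite linear combination of convergent real sequences converges in the finite-dimensional space $W$ (hence in $V$) to $\sum_{i=1}^m f_i(t)\, e_i = f(t)$. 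Thus $F_g'(t) = f(t)$ in $V$ for all $t \in [0,T]\setminus N$. No real obstacle is expected here; the only mild technical point is the bookkeeping between the two cases in the definition~\eqref{eq:gcontinuidad} of the $g$-derivative, but linearity of limits handles both uniformly, which is precisely why the finite-rank hypothesis makes the extension "straightforward" as asserted.
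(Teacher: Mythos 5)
Your argument is correct and is precisely the ``straightforward extension'' the paper alludes to without writing out: decompose $f$ coordinatewise over a basis of the finite-dimensional subspace containing its range, apply the scalar Theorem~\ref{PoRo} to each coordinate, take the finite union of the exceptional null sets, and recombine by linearity of the difference quotients (with equivalence of norms on the finite-dimensional range handling convergence in $V$). No gaps; this fills in exactly the details the paper omits.
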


\begin{theorem}[Generalized Lebesgue's differentiation Theorem] \label{td} Let $T\in\mathbb R^+$ and
 $f:[0,T]\rightarrow V$ be a Bochner $g$-integrable function 
 and consider its indefinite Lebesgue-Stieltjes integral
 \begin{equation}
 F:t \in [0,T] \rightarrow \int_{[0,t)} f(s) \, \operatorname{d} \mu_g(s) \in V.
 \end{equation}
 Then there exists a $g$-measurable set $N\subset [0,T]$ such that
 $\mu_g(N)=0$ and
 \begin{equation}
 F_g'(t)=f(t),\; \forall t \in [0,T] \setminus N.
 \end{equation} 
\end{theorem}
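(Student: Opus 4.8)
The plan is to reduce the Banach-space statement to the two results already in hand: the finite-rank differentiation Theorem~\ref{ftc2} and the scalar Theorem~\ref{PoRo}. First I would dispose of two cheap reductions. Since $C_g$ is open and is therefore an at most countable disjoint union of open intervals on each of which $g$ is constant, $\mu_g(C_g)=0$; hence $C_g$ may be swallowed into the exceptional null set $N$, and it suffices to reason at points of $[0,T]\setminus C_g$. Next, if $t\in D_g$, left-continuity of $g$ gives $\mu_g(\{t\})=g(t^+)-g(t)$ and, by dominated convergence, $F(t^+)=\int_{[0,t]}f\,\operatorname{d}\mu_g$; therefore
\[
F(t^+)-F(t)=\int_{\{t\}}f\,\operatorname{d}\mu_g=f(t)\bigl(g(t^+)-g(t)\bigr),
\]
so that $F_g'(t)=f(t)$ for \emph{every} $t\in D_g$, with no exceptional set. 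What remains is to treat $t\in[0,T]\setminus(C_g\cup D_g)$, where $F_g'(t)$ is a genuine two-sided difference-quotient limit.

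For that case I would invoke Bochner's Theorem to pick simple $g$-measurable functions $\varphi_n$ with $\varphi_n(t)\to f(t)$ in $V$ for $g$-a.e.\ $t$ and $\int_{[0,T)}\|\varphi_n-f\|_V\,\operatorname{d}\mu_g\to0$. Each $\varphi_n$ is finite rank, so by Theorem~\ref{ftc2} its indefinite integral $F_n(t)=\int_{[0,t)}\varphi_n\,\operatorname{d}\mu_g$ satisfies $(F_n)_g'(t)=\varphi_n(t)$ off some $g$-null set $N_n$; and since $h_n:=\|\varphi_n-f\|_V$ is $\mu_g$-integrable (Bochner's Theorem applied to $\varphi_n-f$), Theorem~\ref{PoRo} gives that $G_n(t):=\int_{[0,t)}h_n\,\operatorname{d}\mu_g$ satisfies $(G_n)_g'(t)=h_n(t)$ off some $g$-null set $N_n'$. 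I would then let $N$ be the union of $C_g$, a $g$-null set off which $\varphi_n\to f$ pointwise, and all the $N_n$, $N_n'$, which is still $g$-null. Fix $t\in[0,T]\setminus N$ with $t\notin D_g$, and for $s\ne t$ let $J$ be the interval with endpoints $s$ and $t$. Bochner's inequality gives
\[
\bigl\|(F(s)-F(t))-(F_n(s)-F_n(t))\bigr\|_V\le\int_{J}\|f-\varphi_n\|_V\,\operatorname{d}\mu_g=|G_n(s)-G_n(t)|,
\]
so, dividing by $|g(s)-g(t)|$ and comparing first with $\varphi_n(t)$ and then with $f(t)$,
\[
\Bigl\|\tfrac{F(s)-F(t)}{g(s)-g(t)}-f(t)\Bigr\|_V
\le\tfrac{|G_n(s)-G_n(t)|}{|g(s)-g(t)|}
+\Bigl\|\tfrac{F_n(s)-F_n(t)}{g(s)-g(t)}-\varphi_n(t)\Bigr\|_V
+\|\varphi_n(t)-f(t)\|_V .
\]
As $s\to t$ the first term — a nonnegative ratio of increments of the nondecreasing functions $G_n$ and $g$ — converges to $(G_n)_g'(t)=\|\varphi_n(t)-f(t)\|_V$, and the second term converges to $0$ by Theorem~\ref{ftc2}; hence $\limsup_{s\to t}\bigl\|\tfrac{F(s)-F(t)}{g(s)-g(t)}-f(t)\bigr\|_V\le 2\|\varphi_n(t)-f(t)\|_V$. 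This bound is independent of $s$ and holds for every $n$, and since $t\notin N$ its right-hand side tends to $0$ as $n\to\infty$; therefore the $\limsup$ is $0$, i.e.\ $F_g'(t)=f(t)$.

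I do not anticipate a serious conceptual difficulty, since the hard analytic content is already packaged in Theorems~\ref{ftc2} and~\ref{PoRo} and, crucially, no true interchange of limits is performed: for each fixed $n$ one gets an estimate uniform in the increment $s-t$, and only afterwards does one send $n\to\infty$. The parts that demand care are bookkeeping: checking $\mu_g(C_g)=0$ and $\mu_g(\{t\})=g(t^+)-g(t)$ precisely as used; handling the sign and the interval $J$ correctly in both cases $s<t$ and $s>t$; verifying $\mu_g$-integrability of $h_n$; and, above all, amalgamating the countably many exceptional null sets coming from the approximating sequence, together with $C_g$, into the single $g$-null set $N$ of the statement. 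I would also keep the one-sided argument on $D_g$ (done in the first step) cleanly separate from the two-sided argument on the complement, since $F_g'$ is defined by different formulas there.
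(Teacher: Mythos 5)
Your proposal is correct and, for the main case $t\in[0,T]\setminus(C_g\cup D_g)$, follows essentially the same route as the paper: approximate $f$ by simple (hence finite-rank) functions, split the difference quotient into three terms controlled by Theorem~\ref{ftc2}, Theorem~\ref{PoRo} applied to $\|f-\varphi_n\|_V$, and the pointwise error $\|\varphi_n(t)-f(t)\|_V$, obtaining the bound $2\|\varphi_n(t)-f(t)\|_V$ before letting $n\to\infty$. Your explicit one-sided treatment of $t\in D_g$ via $F(t^+)-F(t)=f(t)\,\mu_g(\{t\})$ is a genuine (and necessary) addition: points of $D_g$ carry positive $g$-measure and so cannot be absorbed into the null set $N$, yet the paper's proof only argues at points of $[0,T]\setminus(C_g\cup D_g)$ and leaves this case implicit.
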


\begin{proof} Let us consider the sequence of simple $g$-measurable functions 
 $\{f_n\}_{n \in \mathbb{N}}$ such that
 \begin{itemize}
 \item $ \|f_n(s)\|_{V} \leq \|f(s)\|_V \left(1+\frac{1}{n} \right)$.
 \item $ \lim\limits_{n \to \infty} f_n(s) =f(s)$, $g$-a.e. $s \in [0,T]$.
 \end{itemize}
 Let $t\in [0,T]\setminus(C_g\cup D_g)$. For every $s\in [0,T]$, $s\ne t$, we have that $g(s)\ne g(t)$ and we can consider, assuming that $s>t$,
 \begin{align*}
 & \frac{F(s)-F(t)}{g(s)-g(t)}-f(t)= \frac{1}{g(s)-g(t)}\int_{[t,s)}[f(\xi)-f(t)]\operatorname{d}\mu_g(\xi)\\= & \frac{1}{g(s)-g(t)}\int_{[t,s)}[f(\xi)-f_n(\xi)+f_n(\xi)-f_n(t)]\operatorname{d}\mu_g(\xi)+f_n(t)-f(t).
 \end{align*}
 Thus,
 \begin{align*}
& \left\|
  \frac{F(s)-F(t)}{g(s)-g(t)}-f(t) \right\|_{V}  \le  \frac{1}{g(s)-g(t)} \int_{[s,t)} \|f(\xi)-f_n(\xi)\|_{V} \, \operatorname{d}\mu_g(\xi) \\
 & + \left\|\frac{1}{g(s)-g(t)}\int_{[t,s)}[f_n(\xi)-f_n(t)]\operatorname{d}\mu_g(\xi)\right\|_V +\left\|f_n(t)-f(t)\right\|_V.
 \end{align*}
 Let us define
 \begin{equation}
 \widetilde{u}_n: t \in [0,T] \rightarrow \widetilde{u}_n(t)=\int_{[0,t)} \|f(\xi)-f_n(\xi)\|_{V} \, \operatorname{d} \mu_g(\xi).
 \end{equation}
 It is clear that $\widetilde{u}_n \in L^1_g([0,T])$. Hence, we can use Theorem~\ref{PoRo} to conclude that
 \begin{equation}
 \lim_{s \to t^+} \frac{\widetilde{u}_n(s)-\widetilde{u}_n(t)}{g(s)-g(t)} = \|f(t)-f_n(t)\|_{V}.
 \end{equation}
 Since $f_n$ is finite rank we can use Theorem~\ref{ftc2} so, in the topology of $V$,
 \begin{equation}\lim_{s\to t^+}\frac{1}{g(s)-g(t)}\int_{[t,s)}[f_n(\xi)-f_n(t)]\operatorname{d}\mu_g(\xi)=0.\end{equation}
 Finally,
 \begin{equation}
  \lim_{s \to t^+ } 
 \left\|
 \frac{F(s)-F(t)}{g(s)-g(t)}-f(t) \right\|_{V}\leq 2\left\|f(t)-f_n(t)\right\|_V.
  \end{equation}
 Hence, taking $n \to \infty$, we obtain the desired result. The case $s<t$ is analogous.
\end{proof}

We denote by $\mathcal{C}_g([0,T],V)$ the set of $g$-continuous 
 functions on interval $[0,T]$ in the sense of~\eqref{eq:gcontinuidad}, and by 
 $\mathcal{BC}_g([0,T],V)$ the subset of bounded $g$-continuous functions on $[0,T]$. We have that 
 the space $\mathcal{BC}_g([0,T],V)$ equipped with the supremum norm
 \begin{equation}
 \|h\|_{0}=\sup_{t \in [0,T]} \|h(t)\|_{V}, \; \forall h \in \mathcal{C}_g([0,T],V),
 \end{equation}
 is a Banach space. The proof is analogous to one given in \cite[Theorem 3.4]{POUSO2017}. 

Given $1\leq p,q \leq \infty$, we define 
\begin{equation}
\begin{array}{c}
\displaystyle
\widetilde{W}_g^{1,p,q}([0,T],V,V'):=\Big\{
u \in L^p_g([0,T],V)\ :\ \exists\, \widetilde{u} \in L^q_g([0,T],V'),
\vspace{0.1cm}\\ \displaystyle
 u(t)=u(0)+\int_{[0,t)} 
\widetilde{u}(s)\, 
\operatorname{d} \mu_g(s)\in V',\, t\in[0,T]\Big\}.
\end{array}
\end{equation}

\begin{rem} Observe that given $u \in \widetilde{W}_g^{1,p,q}([0,T],V,V')$, 
 $\widetilde{u} \in L^q_g([0,T],V')$ is unique up to a set of $g$-measure zero. To 
 see this assume there are two, $\widetilde u_1$ and $\widetilde u_2$, such functions. Then, 
 for every $v\in V'$,
 \begin{equation}\int_{[0,t)} [v(\widetilde u_2(s))-v(\widetilde u_1(s))] \, \operatorname{d} \mu_g(s)=0.
 \end{equation}
 Now, using Theorem~\ref{td} and differentiating on both sides, $v(\widetilde u_2(s))-v(\widetilde u_1(s))=0$ for $g$-a.e. $t\in [0,T]$ and every $v\in V'$, so $\widetilde u_2(t)=\widetilde u_2(t)$ $g$-a.e. Furthermore, by Theorem~\ref{td}, $\widetilde u=u'_g$ $g$-a.e.
\end{rem}

If we endow the space $\widetilde{W}_g^{1,p,q}([0,T],V,V')$ with the norm
\begin{equation}
\|u\|_{\widetilde{W}^{1,p,q}_g([0,T],V,V')}=\|u\|_{L^p_g([0,T],V)}+\|u'_g\|_{L^q_g([0,T],V')},
\end{equation}
it is clear that $\left( \widetilde{W}^{1,p,q}_g([0,T],V,V'),\|\cdot\|_{\widetilde{W}^{1,p,q}_g([0,T],V,V')}\right) $ is a normed vector space.

\begin{lemma}\label{lempq} Given $1\leq p,q \leq \infty$ we get the following continuous inclusion
 \begin{equation}
 \left( \widetilde{W}^{1,p,q}_g([0,T],V,V'),\|\cdot\|_{\widetilde{W}^{1,p,q}_g([0,T],V,V')} \right) 
 \hookrightarrow \left( \mathcal{BC}_g([0,T],V'),\|\cdot\|_0\right) .
 \end{equation}
\end{lemma}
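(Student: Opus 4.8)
The plan is to show that every $u\in\widetilde{W}^{1,p,q}_g([0,T],V,V')$ has a canonical everywhere-defined $V'$-valued representative, namely $\widehat u(t):=u(0)+\int_{[0,t)}\widetilde u(s)\,\operatorname{d}\mu_g(s)$, that $\widehat u$ is bounded and $g$-continuous, and that $\|\widehat u\|_0$ is controlled by $\|u\|_{\widetilde{W}^{1,p,q}_g([0,T],V,V')}$. Throughout I use that $\mu_g$ is a finite measure on $[0,T)$, with total mass $\mu_g([0,T))=g(T)-g(0)$ by left-continuity of $g$; I assume this is positive, the case $\mu_g([0,T))=0$ being degenerate. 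Then Hölder's inequality gives the continuous inclusions $L^q_g([0,T],V')\hookrightarrow L^1_g([0,T],V')$ and $L^p_g([0,T],V)\hookrightarrow L^1_g([0,T],V)$, with constants $\mu_g([0,T))^{1-1/q}$ and $\mu_g([0,T))^{1-1/p}$ (read as $\mu_g([0,T))$ when the exponent is $\infty$). In particular $\widetilde u\in L^1_g([0,T],V')$, so $s\mapsto\|\widetilde u(s)\|_{V'}$ lies in $L^1_g([0,T])$ and the set function $A\mapsto\int_A\|\widetilde u(s)\|_{V'}\,\operatorname{d}\mu_g(s)$ is absolutely continuous with respect to $\mu_g$: for every $\varepsilon>0$ there is $\delta>0$ such that $\mu_g(A)<\delta$ forces $\int_A\|\widetilde u\|_{V'}\,\operatorname{d}\mu_g<\varepsilon$.

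Next I would prove that $F(t):=\int_{[0,t)}\widetilde u(s)\,\operatorname{d}\mu_g(s)$ is $g$-continuous on $[0,T]$ with values in $V'$. For $t<s$ in $[0,T]$, additivity of the integral and the Bochner inequality give $\|F(s)-F(t)\|_{V'}=\big\|\int_{[t,s)}\widetilde u\,\operatorname{d}\mu_g\big\|_{V'}\le\int_{[t,s)}\|\widetilde u\|_{V'}\,\operatorname{d}\mu_g$, while $\mu_g([t,s))=g(s)-g(t)$ since $g$ is left-continuous. Hence $|g(s)-g(t)|<\delta$ forces $\|F(s)-F(t)\|_{V'}<\varepsilon$, which is exactly (uniform) $g$-continuity of $F$; adding the constant $u(0)\in V'$ preserves this, so $\widehat u\in\mathcal{C}_g([0,T],V')$. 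Moreover $\|\widehat u(t)\|_{V'}\le\|u(0)\|_{V'}+\int_{[0,T)}\|\widetilde u\|_{V'}\,\operatorname{d}\mu_g=\|u(0)\|_{V'}+\|\widetilde u\|_{L^1_g([0,T],V')}$ for every $t$, so $\widehat u\in\mathcal{BC}_g([0,T],V')$; and by the very definition of $\widetilde{W}^{1,p,q}_g$ one has $\widehat u(t)=u(t)$ in $V'$ for every $t\in[0,T]$, so $\widehat u$ is the representative we want — it is independent of the choice of $\widetilde u$ by the uniqueness remark, and $\widetilde u=u'_g$ $g$-a.e.

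It remains to estimate $\|\widehat u\|_0=\sup_{t\in[0,T]}\|\widehat u(t)\|_{V'}$. From the displayed bound, $\|\widehat u\|_0\le\|u(0)\|_{V'}+\|\widetilde u\|_{L^1_g([0,T],V')}$, and the second term is at most $\mu_g([0,T))^{1-1/q}\|\widetilde u\|_{L^q_g([0,T],V')}=\mu_g([0,T))^{1-1/q}\|u'_g\|_{L^q_g([0,T],V')}$. For the first term, write $u(0)=u(t)-F(t)$, valid for every $t$; taking $V'$-norms and using the continuity constant $c$ of the embedding $V\hookrightarrow H\equiv H'\hookrightarrow V'$ gives $\|u(0)\|_{V'}\le c\,\|u(t)\|_V+\|\widetilde u\|_{L^1_g([0,T],V')}$ for $g$-a.e.\ $t$. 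Integrating this over $[0,T)$ against $\mu_g$, dividing by $\mu_g([0,T))>0$, and using $\int_{[0,T)}\|u\|_V\,\operatorname{d}\mu_g\le\mu_g([0,T))^{1-1/p}\|u\|_{L^p_g([0,T],V)}$, we get $\|u(0)\|_{V'}\le c\,\mu_g([0,T))^{-1/p}\|u\|_{L^p_g([0,T],V)}+\|\widetilde u\|_{L^1_g([0,T],V')}$. Combining the estimates yields $\|\widehat u\|_0\le C\big(\|u\|_{L^p_g([0,T],V)}+\|u'_g\|_{L^q_g([0,T],V')}\big)=C\,\|u\|_{\widetilde{W}^{1,p,q}_g([0,T],V,V')}$ for a constant $C$ depending only on $g$, $T$, $p$, $q$ and $c$, which is the asserted continuous inclusion.

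I expect the main obstacle to be the $g$-continuity of the indefinite integral $F$, i.e.\ marrying absolute continuity of the $g$-integral with the identity $\mu_g([t,s))=g(s)-g(t)$ (and checking the analogous estimate when $s<t$); the bound on $\|u(0)\|_{V'}$ is the other spot requiring a small trick, namely averaging the pointwise identity $u(0)=u(t)-F(t)$ over $[0,T)$ instead of evaluating it at a single time.
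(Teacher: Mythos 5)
Your proof is correct and follows essentially the same route as the paper: represent $u$ through its indefinite $g$-integral, bound $\|u\|_0$ by $\|u(0)\|_{V'}+\|\widetilde u\|_{L^1_g([0,T],V')}$, control $\|u(0)\|_{V'}$ by averaging the identity $u(0)=u(t)-F(t)$ over $[0,T)$ (the paper does the same averaging via $\|u(0)\|_{V'}=\mu_g([0,T))^{-1/p}\|u(0)\|_{L^p_g([0,T],V')}$), and convert between $L^1_g$, $L^p_g$ and $L^q_g$ norms with Hölder. Your explicit verification that the indefinite integral is (uniformly) $g$-continuous is a welcome elaboration of a step the paper merely asserts by claiming membership in $\mathcal{AC}_g([0,T],V')$.
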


\begin{proof} Let $u \in \widetilde{W}^{1,p,q}_g([0,T],V,V')$ and  define 
 \begin{equation}v: t \in [0,T] \rightarrow v(t)=\int_{[0,t)} \widetilde{u}(s)\, \operatorname{d} \mu_g(s).\end{equation}
  We have that 
 $v \in \mathcal{AC}_g([0,T],V')$. Moreover,
 \begin{equation}
 \begin{aligned}
 \|v(t)\|_{V'} \leq & \int_{[0,t)} \|\widetilde{u}(s)\|_{V'}\, \operatorname{d} \mu_g(s) \\
 = &
 \|\widetilde{u}\|_{L^1_g([0,T],V')} \leq \mu_g([0,T])^{1-\frac{1}{q}} \|\widetilde{u}\|_{L^q_g([0,T],V')},
 \end{aligned}
 \end{equation}

 where $\mu_g([0,T])^{1-\frac{1}{q}}$ is the embedding constant of $L^q_g([0,T],V')$ into\\$L^1_g([0,T],V')$ --cf. \cite[Theorem 13.17]{hewitt1965}. 
 Thus,
 \begin{equation}\label{redb}
 \begin{aligned}
 & \|v\|_{L^p_g([0,T],V')}=  \left( \int_{[0,T)} \|\widetilde{u}(s)\|_{V'}^p\, \operatorname{d} \mu_g(s)\right) ^{\frac{1}{p}}\\ & \le \left( \int_{[0,T)} (\mu_g([0,T))^{1-\frac{1}{q}}\|\widetilde{u}\|_{L^q_g([0,T],V')})^p\, \operatorname{d} \mu_g(s)\right) ^{\frac{1}{p}} \\ & =  \mu([0,T))^{1+\frac{1}{p}-\frac{1}{q}}\|\widetilde{u}\|_{L^q_g([0,T],V')},
 \end{aligned}
 \end{equation}

 Now, $v=u+c$ with $c=u(0) \in V'$ and, if $\widetilde N$ is the embedding constant of $V$ in $V'$, using~\eqref{redb},
 \begin{equation}
 \begin{aligned}
 \|c\|_{V'} = & \mu_g([0,T))^{-\frac{1}{p}} \left[ \int_{[0,T)} \|c\|_{V'}^p \, \operatorname{d} \mu_g(s) \right]^{\frac{1}{p}}  \\ 
 \leq & \mu_g([0,T))^{-\frac{1}{p}} \|v-u\|_{L^p_g([0,T],V')} \\
 \leq & \mu_g([0,T))^{-\frac{1}{p}} \|u\|_{L^p_g([0,T],V')} + \mu_g([0,T))^{-\frac{1}{p}} 
 \|v\|_{L^p_g([0,T],V')} \\ \leq & \widetilde N \mu_g([0,T))^{-\frac{1}{p}} \|u\|_{L^p_g([0,T],V)}
 +\mu_g([0,T))^{1-\frac{1}{q}}\|\widetilde{u}\|_{L^q_g([0,T],V')}.
 \end{aligned}
 \end{equation}
 Finally,
 \begin{equation}
 \begin{aligned}
 \|u\|_{0} = & \sup_{t \in [0,T]} \left\| \int_{[0,t)} \widetilde{u}(s) \, 
 \operatorname{d} \mu_g(s)-c \right\|_{V'} \\
 \leq & \int_{[0,T)} \|\widetilde{u}(s)\|_{V'} \, \operatorname{d} \mu_g(s)+\|c\|_{V'} \\
 \leq & 2\mu_g([0,T))^{1-\frac{1}{q}} \|\widetilde{u}\|_{L^q_g([0,T],V')}+\widetilde N \mu_g([0,T))^{-\frac{1}{p}} \|u\|_{L^p_g([0,T],V)}.
 \end{aligned}
 \end{equation}
 Thus,
 \begin{equation}
 \|u\|_{0} \leq \max\left\{2\mu_g([0,T))^{1-\frac{1}{q}},\widetilde N \mu_g([0,T))^{-\frac{1}{p}}\right\} \|u\|_{\widetilde{W}^{1,p,q}_g([0,T],V,V')}.
 \end{equation}

\end{proof}
\begin{cor} The space $(\widetilde{W}^{1,p,q}_g([0,T],V,V'),\|\cdot\|_{\widetilde{W}^{1,p,q}_g([0,T],V,V')} )$ 
 is a Banach space.
\end{cor}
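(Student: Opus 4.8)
The plan is the standard completeness argument for a space defined through two $L^r_g$-norms, with Lemma~\ref{lempq} supplying the control needed to pass to the limit in the integral representation. First I would take a Cauchy sequence $\{u_n\}_{n\in\mathbb N}$ in $\widetilde{W}^{1,p,q}_g([0,T],V,V')$. By the very definition of the norm, $\{u_n\}$ is Cauchy in $L^p_g([0,T],V)$ and $\{\widetilde u_n\}$ (the associated functions, which coincide $g$-a.e.\ with $(u_n)'_g$) is Cauchy in $L^q_g([0,T],V')$. Since both $L^p_g([0,T],V)$ and $L^q_g([0,T],V')$ are Banach spaces, there are $u\in L^p_g([0,T],V)$ and $w\in L^q_g([0,T],V')$ with $u_n\to u$ in $L^p_g([0,T],V)$ and $\widetilde u_n\to w$ in $L^q_g([0,T],V')$.

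Next I would identify a $g$-continuous (in $V'$) representative of $u$. By Lemma~\ref{lempq}, $\{u_n\}$ is also Cauchy in $(\mathcal{BC}_g([0,T],V'),\|\cdot\|_0)$, which is complete, so $u_n\to \bar u$ uniformly on $[0,T]$ for some $\bar u\in\mathcal{BC}_g([0,T],V')$. Passing to a subsequence along which $u_n(s)\to u(s)$ in $V$ for $g$-a.e.\ $s\in[0,T]$ (available because $u_n\to u$ in $L^p_g([0,T],V)$) and invoking the continuous embedding $V\hookrightarrow V'$, I get $u_n(s)\to u(s)$ in $V'$ for $g$-a.e.\ $s$; comparing with the uniform limit $\bar u$ forces $u=\bar u$ $g$-a.e. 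Thus I may and do take $\bar u$ as the representative of $u$.

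Then I would pass to the limit, in $V'$, in the identity $u_n(t)=u_n(0)+\int_{[0,t)}\widetilde u_n(s)\,\operatorname{d}\mu_g(s)$, valid for every $t\in[0,T]$. The left-hand side and the constant converge to $u(t)$ and $u(0)$ uniformly in $t$ by the uniform convergence $u_n\to\bar u=u$. For the integral term, the continuous embedding $L^q_g([0,T],V')\hookrightarrow L^1_g([0,T],V')$ (with constant $\mu_g([0,T])^{1-1/q}$, exactly as in the proof of Lemma~\ref{lempq}) gives $\widetilde u_n\to w$ in $L^1_g([0,T],V')$, so by Bochner's inequality
\[
\Big\|\int_{[0,t)}\widetilde u_n(s)\,\operatorname{d}\mu_g(s)-\int_{[0,t)}w(s)\,\operatorname{d}\mu_g(s)\Big\|_{V'}\le\int_{[0,T)}\|\widetilde u_n(s)-w(s)\|_{V'}\,\operatorname{d}\mu_g(s)\longrightarrow 0
\]
uniformly in $t$. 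Hence $u(t)=u(0)+\int_{[0,t)}w(s)\,\operatorname{d}\mu_g(s)$ in $V'$ for all $t\in[0,T]$, so $u\in\widetilde{W}^{1,p,q}_g([0,T],V,V')$ with associated function $w$; by the Remark following the definition of $\widetilde{W}^{1,p,q}_g$ (which uses Theorem~\ref{td}), $w$ is the unique such function and $w=u'_g$ $g$-a.e. Finally $\|u_n-u\|_{\widetilde{W}^{1,p,q}_g([0,T],V,V')}=\|u_n-u\|_{L^p_g([0,T],V)}+\|\widetilde u_n-w\|_{L^q_g([0,T],V')}\to 0$, so $u_n\to u$ in $\widetilde{W}^{1,p,q}_g([0,T],V,V')$, proving completeness.

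I expect the only genuinely delicate point to be the identification of the $L^p_g([0,T],V)$-limit $u$ with the uniform $V'$-limit $\bar u$: a priori $u$ is merely an equivalence class of $V$-valued maps, and one must go through a $g$-a.e.\ convergent subsequence together with the chain of embeddings $V\hookrightarrow H\hookrightarrow V'$ to see that this class admits a representative in $\mathcal{BC}_g([0,T],V')$ for which the integral equation makes sense and passes to the limit. Everything else reduces to the completeness of the $L^r_g$ spaces and Bochner's inequality.
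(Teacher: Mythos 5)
Your proposal is correct and follows essentially the same route as the paper: completeness of $L^p_g([0,T],V)$ and $L^q_g([0,T],V')$ for the two components, Lemma~\ref{lempq} to obtain convergence in $\mathcal{BC}_g([0,T],V')$, and then passage to the limit in the integral identity $u_n(t)=u_n(0)+\int_{[0,t)}\widetilde u_n\,\operatorname{d}\mu_g$. The only (minor) differences are that you pass to the limit directly in the $V'$-norm via Bochner's inequality rather than testing against $v\in V$ as the paper does, and that you spell out the identification of the $L^p_g$-limit with the uniform $V'$-limit through an a.e.\ convergent subsequence, a point the paper asserts without detail.
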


\begin{proof} We first prove that $\widetilde{W}^{1,p,q}_g([0,T],V,V')$ is a Banach space. Consider a Cauchy sequence
 $\{u_n\}_{n \in \mathbb{N}}$ in $\widetilde{W}^{1,p,q}_g([0,T],V,V')$. In particular, 
 the sequences $\{u_n\}_{n \in \mathbb{N}}$ and
 $\{\widetilde{u}_n\}_{n \in \mathbb{N}}$ are Cauchy sequences in $L^p_g([0,T],V)$ and $L^q_g([0,T],V')$. Furthermore, thanks to Lemma~\ref{lempq}, they will also be so in
 $\mathcal{BC}_g([0,T],V')$. Since the previous spaces are complete, there will exist $u \in L^p_g([0,T],V)$ and $\widetilde{u}\in L^q_g([0,T],V')$ such that
 $u_n \rightarrow u$ in $L^p_g([0,T],V)$, $\widetilde{u}_n \rightarrow \widetilde{u}$ strongly in $L^q_g([0,T],V')$, $u_n(t) \rightarrow u(t)$ strongly in $V'$ for every $t \in [0,T]$. Thus, 
 we can take the following expression to the limit for every $ v \in V'$ and every $t \in [0,T]$,
 \begin{equation}
 \begin{aligned}
 \left<u_n(t),v\right>_{V',V} = & \left<u_n(0),v\right>_{V',V}
 + \int_{[0,t)} \left<\widetilde{u}_n(s),v\right>_{V',V} \, \operatorname{d} \mu_g(s),
 \end{aligned}
 \end{equation}
 and we get, for every $ v \in V'$ and every $t \in [0,T]$,
 \begin{equation}
 \left<u(t),v\right>_{V',V} = \left<u(0),v\right>_{V',V} \\
 \displaystyle
 + \int_{[0,t)} \left<\widetilde{u}(s),v\right>_{V',V} \, \operatorname{d} \mu_g(s).
 \end{equation}
 Since
 \begin{equation}
 \left<u_n(t),v\right>_{V',V}-\left<u(t),v\right>_{V',V} \leq 
 \|u_n(t)-t(t)\|_{V'}\|v\|_V, 
 \vspace{0.2cm} 
 \end{equation}
 we have that,for every $ v \in V$ and every $ t \in [0,T]$,
 \begin{equation}
 \begin{aligned}
& \int_{[0,t)} 
  \left(\left<\widetilde{u}_n(s),v\right>_{V',V}\, -
 \left<\widetilde{u}(s),v\right>_{V',V}\right) \operatorname{d} \mu_g(s) \\
 \leq  &
 \int_{[0,t)} \|\widetilde{u}_n(s)-\widetilde{u}(s)\|_{V'}\|v\|\, \operatorname{d} \mu_g(s) 
 \\ 
 \leq  &
 \|v\|\, \int_{[0,T)} \|\widetilde{u}_n(s)-\widetilde{u}(s)\|_{V'} \operatorname{d} \mu_g(s).
 \end{aligned}
 \end{equation}
 Therefore $\{u_n\}_{n \in \mathbb{N}}\to u$ in  $\widetilde{W}^{1,p,q}_g([0,T],V,V')$.
\end{proof}

Now we are going to prove that the space $\widetilde{W}^{1,p,q}_g([0,T],V,V')$ 
is also reflexive. In order to achieve that, we need some results that we are going 
to review for the convenience of the reader.

\begin{dfn}[{\cite{Bennett}}]
 A pair $(X, Y)$ of Banach spaces $X$ and $Y$ is called a 
 \emph{compatible couple} if there is some Hausdorff topological vector space in which each of $X$ and $Y$ is continuously embedded. Let $((X,\|\cdot\|_X),(Y,\|\cdot\|_Y))$ be a compatible couple, Then $X\cap Y$ with the norm $\|x\|=\max\{\|x\|_X,\|x\|_Y\}$ and $X+Y:=\{x+y\ :\ x\in X,\ y\in Y\}$ with the norm \begin{equation}\|z\|_{X+Y}=\inf_{\substack{x\in X\\ y\in Y\\x+y=z}}\left( \|x\|_X+\|y\|_Y\right) \end{equation} are Banach spaces. The cartesian product $X\times Y$ with the norm $\|(x,y)\|_{X\times Y}=\|x\|_X+\|y\|_Y$ is such that
 $X+Y\simeq (X\times Y)/L$ where $L:=\{(z,-z)\in X\cap Y\}$. A compatible couple $(X, Y)$ with the property that $X\cap Y$ is dense in $X$ and in $Y$ is called a \emph{conjugate couple}.
\end{dfn}

\begin{lem}[{\cite[Theorem 3.1, p. 15]{Krein}}]\label{lemcc} If $(X, Y)$ is a conjugate couple, then $(X\cap Y)'$ is isometric to $X'+ Y'$ and $(X+ Y)'$ is isometric to $X'\cap Y'$.
\end{lem}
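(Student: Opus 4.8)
The plan is to establish the two isometric identifications separately, in each case reducing the statement to standard Hahn--Banach duality for subspaces and quotients of suitable products of $X$ and $Y$, and keeping careful track of which product norm is in play. Throughout, write $X\times_\infty Y$ for the vector space $X\times Y$ equipped with the norm $\max\{\|x\|_X,\|y\|_Y\}$, so that $X\times Y$ (as in the preceding definition) carries the sum norm; recall that $(X\times Y)'\cong X'\times_\infty Y'$ and $(X\times_\infty Y)'\cong X'\times Y'$, both isometrically, as for finite products of Banach spaces. Observe also at the outset that $(X',Y')$ is itself a compatible couple: restriction to $X\cap Y$ embeds each of $X'$ and $Y'$ into $(X\cap Y)'$, and these restriction maps are injective precisely because $X\cap Y$ is dense in $X$, resp. in $Y$. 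Hence the preceding definition applies to $(X',Y')$, giving $X'+Y'\cong(X'\times Y')/L'$ isometrically, with $L'=\{(h,-h):h\in X'\cap Y'\}$, and $X'\cap Y'$ with the norm $\max\{\|\cdot\|_{X'},\|\cdot\|_{Y'}\}$.

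For $(X\cap Y)'\cong X'+Y'$, I would define the contraction $\Phi\colon X'\times Y'\to(X\cap Y)'$ by $\Phi(f,g)=(f+g)|_{X\cap Y}$; that $\|\Phi\|\le 1$ is immediate from $\|z\|_X,\|z\|_Y\le\|z\|_{X\cap Y}$. A short computation, using density of $X\cap Y$ in both $X$ and $Y$ to make extensions of functionals unique, shows $\ker\Phi=L'$, so $\Phi$ factors through an injective contraction $\overline\Phi\colon X'+Y'\to(X\cap Y)'$. For surjectivity and the reverse norm bound I would use that $z\mapsto(z,z)$ embeds $X\cap Y$ isometrically onto a \emph{closed} subspace of $X\times_\infty Y$ (closedness being argued exactly as in the completeness of $X\cap Y$); thus, given $\psi\in(X\cap Y)'$, the Hahn--Banach theorem extends it to an element of $(X\times_\infty Y)'\cong X'\times Y'$ of the same norm, i.e. to a pair $(f,g)$ with $\Phi(f,g)=\psi$ and $\|f\|_{X'}+\|g\|_{Y'}=\|\psi\|_{(X\cap Y)'}$. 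Together with injectivity this forces $\overline\Phi$ to be a surjective isometry.

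For $(X+Y)'\cong X'\cap Y'$, I would start from the isometric identification $X+Y\cong(X\times Y)/L$ with $L=\{(z,-z):z\in X\cap Y\}$ from the preceding definition, so that by the isometric duality between a quotient $Z/L$ and the annihilator $L^{\perp}\subseteq Z'$ one gets $(X+Y)'\cong L^{\perp}\subseteq(X\times Y)'\cong X'\times_\infty Y'$, isometrically. Then $L^{\perp}=\{(f,g):f|_{X\cap Y}=g|_{X\cap Y}\}$, and the map $(f,g)\mapsto f|_{X\cap Y}$ identifies $L^{\perp}$ with $X'\cap Y'$: it is onto because any $\varphi\in X'\cap Y'$ has extensions $f\in X'$ and $g\in Y'$ that automatically agree on $X\cap Y$, and it is one-to-one because, by density in $X$ and in $Y$, $f|_{X\cap Y}=0$ forces $f=0$ and then $g=0$. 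Since the $X'\times_\infty Y'$ norm $\max\{\|f\|_{X'},\|g\|_{Y'}\}$ restricts on $L^{\perp}$ to the $X'\cap Y'$ norm, this identification is isometric, which completes the proof.

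The ancillary facts $(X\times Y)'\cong X'\times_\infty Y'$, $(X\times_\infty Y)'\cong X'\times Y'$, and the isometric dualities $(Z/L)'\cong L^{\perp}$ and $S'\cong Z'/S^{\perp}$ for closed subspaces $S$ are all routine Hahn--Banach computations that I would only sketch. The one genuinely delicate point — and the main obstacle — is the bookkeeping of norms: one must pair $X\cap Y$ (max norm) with the $\ell^\infty$-product and $X+Y$ (quotient norm) with the $\ell^1$-product so that dualization swaps the two correctly, and one must pin down exactly the two places — the identification $\ker\Phi=L'$ in the first part and $L^{\perp}\cong X'\cap Y'$ in the second — where density of $X\cap Y$ in each of $X$ and $Y$ is indispensable; without this hypothesis these become proper inclusions and the isometries break down.
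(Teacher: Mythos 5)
Your proof is correct. Note, however, that the paper does not prove this lemma at all: it is imported verbatim by citation from Krein--Petunin--Semenov, so there is no in-paper argument to compare yours against. What you have written is essentially the standard proof of the duality theorem for interpolation couples (it is the argument behind, e.g., Theorem 2.7.1 in Bergh--L\"ofstr\"om), and all the load-bearing steps are in place: the pairing of the max-norm intersection with the $\ell^\infty$-product and of the sum space with the $\ell^1$-product so that dualization swaps them; the Hahn--Banach extension from the closed diagonal copy of $X\cap Y$ in $X\times_\infty Y$ to get surjectivity and the reverse norm inequality for $\overline\Phi$; the quotient/annihilator duality $\bigl((X\times Y)/L\bigr)'\cong L^\perp$ for the second identification; and, crucially, the correct localization of where density of $X\cap Y$ in each of $X$ and $Y$ enters (to make $(X',Y')$ a compatible couple in the first place, to identify $\ker\Phi$ with $L'$, and to make $(f,g)\mapsto f|_{X\cap Y}$ injective on $L^\perp$). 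The only points you leave implicit -- closedness of $L$ in $X\times_1 Y$ (needed for the quotient duality), which follows since $z\mapsto(z,-z)$ is bounded below by the intersection norm, and the automatic closedness of $L'=\ker\Phi$ -- are one-line remarks. As a blind reconstruction of a cited result, this is complete.
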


Let us define the  space
\begin{equation}
\begin{aligned}
\widetilde L^q_g([0,T],V'):= & \Big\{ u:[0,T]\to V'\ :\ \exists\, \widetilde{u} \in L^q_g([0,T],V'),\\
\vspace{0.1cm} &
 u(t)=u(0)+\int_{[0,t)}\widetilde{u}(s)\, 
\operatorname{d} \mu_g(s) \in V',\, \forall t\in[0,T]\Big\}.
\end{aligned}
\end{equation}
It is clear that $\widetilde L^q_g([0,T],V')\subset \mathcal{BC}_g([0,T],V')$ 
so, $\forall u \in \widetilde L^q_g([0,T],V')$, $u(0)$ has sense in $V'$. 
We also have $u'_g(t)=\widetilde{u}(t) \in V'$, $g$-a.e. $t \in [0,T]$.

\begin{lem}$\widetilde L^q_g([0,T],V')$ is a Banach space with the norm
\begin{equation}\|u\|_{\widetilde L^q_g([0,T],V')}:=\|u(0)\|_{V'}+\|\widetilde u\|_{L^q_g([0,T],V')}.\end{equation}
\end{lem}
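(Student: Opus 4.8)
The plan is to exhibit an explicit isometric isomorphism between $\widetilde L^q_g([0,T],V')$ and a genuine product Banach space, and then transport completeness through it. Concretely, I would consider the map
\[
\Phi:\widetilde L^q_g([0,T],V')\longrightarrow V'\times L^q_g([0,T],V'),\qquad \Phi(u)=\bigl(u(0),\widetilde u\bigr),
\]
where $\widetilde u$ is the (a.e.\ unique, as noted after the definition) $L^q_g$-representative of $u'_g$. The norm on the target is $\|(c,w)\|=\|c\|_{V'}+\|w\|_{L^q_g([0,T],V')}$, under which $V'\times L^q_g([0,T],V')$ is a Banach space (a finite product of Banach spaces; recall $V$ is reflexive separable, hence $V'$ is Banach, and $L^q_g([0,T],V')$ is Banach by the Remark following the $L^p_g$ definition). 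By construction $\Phi$ is linear and norm-preserving, so it suffices to show $\Phi$ is a bijection; completeness of $\widetilde L^q_g([0,T],V')$ then follows immediately, since the image of a Cauchy sequence is Cauchy in a complete space, its limit $(c,w)$ lies in the target, and $\Phi^{-1}(c,w)$ is the corresponding element of $\widetilde L^q_g([0,T],V')$.

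For surjectivity, given $(c,w)\in V'\times L^q_g([0,T],V')$ I would define $u(t):=c+\int_{[0,t)} w(s)\,\operatorname{d}\mu_g(s)$. Since $L^q_g([0,T],V')\hookrightarrow L^1_g([0,T],V')$ (the embedding constant being $\mu_g([0,T])^{1-1/q}$, as used in the proof of Lemma~\ref{lempq}), $w$ is Bochner $g$-integrable, so $u$ is well defined with values in $V'$; the indefinite integral is $g$-absolutely continuous, hence $u\in\mathcal{BC}_g([0,T],V')$, and therefore $u\in\widetilde L^q_g([0,T],V')$ with witness $\widetilde u=w$. Evaluating at $t=0$ gives $u(0)=c$, so $\Phi(u)=(c,w)$. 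For injectivity, suppose $\Phi(u)=0$: then $u(0)=0$ and the witness $\widetilde u$ vanishes $g$-a.e.\ in $L^q_g([0,T],V')$, whence $\int_{[0,t)}\widetilde u(s)\,\operatorname{d}\mu_g(s)=0$ for all $t$ by Bochner's inequality, so $u(t)=u(0)+0=0$ for all $t$; thus $u=0$ as an element of $\widetilde L^q_g([0,T],V')$. (Here I am using that the map $u\mapsto\widetilde u$ is single-valued on $\widetilde L^q_g$: this is exactly the uniqueness argument in the Remark after the definition of $\widetilde W^{1,p,q}_g$, applied via Theorem~\ref{td} and the Hahn--Banach separation of points of $V'$ by $V$.)

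The main obstacle, such as it is, is bookkeeping rather than depth: one must be careful that the target space really is the \emph{product} $V'\times L^q_g([0,T],V')$ with the sum norm (so that it is plainly complete) rather than some more delicate interpolation space, and that $\Phi$ is an exact isometry — which forces the choice of the norm $\|u\|_{\widetilde L^q_g}=\|u(0)\|_{V'}+\|\widetilde u\|_{L^q_g([0,T],V')}$ in the statement. The only genuinely nontrivial input is the well-definedness of the witness $\widetilde u$ and the recovery $u(t)=u(0)+\int_{[0,t)}\widetilde u\,\operatorname{d}\mu_g$ in $V'$, but both are already secured by Theorem~\ref{td} and the remarks preceding the lemma. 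Hence the proof reduces to checking that $\Phi$ is a well-defined linear isometric bijection, after which completeness of $\widetilde L^q_g([0,T],V')$ is inherited from that of the product.
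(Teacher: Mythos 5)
Your proof is correct and is essentially the paper's argument in different packaging: the paper directly verifies the norm axioms and then sends a Cauchy sequence $\{u_n\}$ to the Cauchy pairs $(u_n(0),\widetilde u_n)$ in $V'$ and $L^q_g([0,T],V')$, reconstructing the limit as $u(t)=x+\int_{[0,t)}v\,\operatorname{d}\mu_g$, which is exactly your isometry $\Phi$ applied termwise. Indeed, the paper introduces your map $\Phi^{-1}$ explicitly (as $\varphi$) in the subsequent reflexivity lemma, so nothing new is needed.
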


\begin{proof}
First, $\|\cdot\|_{\widetilde L^q_g([0,T],V')}$ is a norm. It is clearly subadditive and absolutely homogeneous. It is left to check that it is positive definite. If $\|u\|_{\widetilde L^q_g([0,T],V')}=0$ then $\|u(0)\|_{V'}=0$ and $\|\widetilde u\|_{L^q_g([0,T],V')}=0$. Since they both are norms, $u(0)=0$ and $\widetilde u=0$. By definition of $u$,
\begin{equation}
u(t)=u(0)+\int_{[0,t)} 
\widetilde{u}(s)\, 
\operatorname{d} \mu_g(s)=0.
\end{equation}

Now, take a Cauchy sequence $\{u_n\}_{n\in\mathbb N}$ in $\widetilde L^q_g([0,T],V')$. Then
\begin{equation}
u_n(t)=u_n(0)+\int_{[0,t)} 
\widetilde{u}_n(s)\, 
\operatorname{d} \mu_g(s).
\end{equation}
Thus, $\{u_n(0)\}_{n\in\mathbb N}$ and $\{\widetilde{u}_n\}_{n\in\mathbb N}$ are Cauchy sequences and, since both $V'$ and $L^q_g([0,T],V')$ are Banach spaces, they converge to $x$ and $v$ respectively. Now, define
\begin{equation}
u(t)=x+\int_{[0,t)} 
v(s)\, 
\operatorname{d} \mu_g(s).
\end{equation}
Clearly $u\in\widetilde L^q_g([0,T],V')$ and $\{u_n\}_{n\in\mathbb N}\to u$ in $\widetilde L^q_g([0,T],V')$. Hence, we have that $\widetilde L^q_g([0,T],V')$ is a Banach space.
\end{proof}
\begin{lem} $\widetilde{W}^{1,p,q}_g([0,T],V,V')$ and $ L^p_g([0,T],V)\cap\widetilde L^q_g([0,T],V')$ are isomorphic.
\end{lem}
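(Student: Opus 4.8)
The plan is to exhibit the natural identification as a linear isomorphism of Banach spaces: every element of $\widetilde{W}^{1,p,q}_g([0,T],V,V')$ comes, by the very form of its defining integral representation, with a distinguished everywhere–defined $g$-continuous $V'$-valued representative, and I claim the assignment of this representative is a bijection onto $L^p_g([0,T],V)\cap\widetilde L^q_g([0,T],V')$ that is bounded with bounded inverse; equivalently, that the two spaces coincide as sets and carry equivalent norms. As a preliminary, I would note that $\big(L^p_g([0,T],V),\widetilde L^q_g([0,T],V')\big)$ is a compatible couple: since $V\hookrightarrow V'$ continuously and $\mu_g([0,T])<\infty$, we have $L^p_g([0,T],V)\hookrightarrow L^1_g([0,T],V')$, while $\widetilde L^q_g([0,T],V')\hookrightarrow\mathcal{BC}_g([0,T],V')\hookrightarrow L^1_g([0,T],V')$, so both embed continuously into the Hausdorff space $L^1_g([0,T],V')$ and the intersection, normed by $\max\{\|\cdot\|_{L^p_g([0,T],V)},\|\cdot\|_{\widetilde L^q_g([0,T],V')}\}$, makes sense.

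For the set equality: if $u\in\widetilde{W}^{1,p,q}_g([0,T],V,V')$ then by definition $u\in L^p_g([0,T],V)$ and there is $\widetilde u\in L^q_g([0,T],V')$ with $u(t)=u(0)+\int_{[0,t)}\widetilde u(s)\,\operatorname{d}\mu_g(s)$ for all $t\in[0,T]$, which is exactly the statement that (the $g$-continuous representative of) $u$ lies in $\widetilde L^q_g([0,T],V')$; hence $u\in L^p_g([0,T],V)\cap\widetilde L^q_g([0,T],V')$. Conversely, any $u$ in the intersection lies in $L^p_g([0,T],V)$ and admits the integral representation with $\widetilde u\in L^q_g([0,T],V')$, so $u\in\widetilde{W}^{1,p,q}_g([0,T],V,V')$. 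Linearity and injectivity of the identification are immediate, and by the Remark after the definition of $\widetilde{W}^{1,p,q}_g([0,T],V,V')$ we have $\widetilde u=u'_g$ $g$-a.e., so the two descriptions of the derivative part agree.

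For the norm equivalence, in one direction $\|u\|_{L^p_g([0,T],V)}\le\|u\|_{\widetilde{W}^{1,p,q}_g}$ and $\|\widetilde u\|_{L^q_g([0,T],V')}\le\|u\|_{\widetilde{W}^{1,p,q}_g}$, while the estimate for $\|u(0)\|_{V'}$ already obtained in the proof of Lemma~\ref{lempq}, namely $\|u(0)\|_{V'}\le\widetilde N\,\mu_g([0,T))^{-1/p}\|u\|_{L^p_g([0,T],V)}+\mu_g([0,T))^{1-1/q}\|\widetilde u\|_{L^q_g([0,T],V')}$ with $\widetilde N$ the norm of the embedding $V\hookrightarrow V'$, yields $\|u\|_{\widetilde L^q_g([0,T],V')}=\|u(0)\|_{V'}+\|\widetilde u\|_{L^q_g([0,T],V')}\le C\,\|u\|_{\widetilde{W}^{1,p,q}_g}$ for a suitable constant $C$; hence $\|u\|_{L^p_g([0,T],V)\cap\widetilde L^q_g([0,T],V')}\le\max\{1,C\}\,\|u\|_{\widetilde{W}^{1,p,q}_g}$. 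In the other direction $\|u\|_{\widetilde{W}^{1,p,q}_g}=\|u\|_{L^p_g([0,T],V)}+\|\widetilde u\|_{L^q_g([0,T],V')}\le\|u\|_{L^p_g([0,T],V)}+\|u\|_{\widetilde L^q_g([0,T],V')}\le 2\,\|u\|_{L^p_g([0,T],V)\cap\widetilde L^q_g([0,T],V')}$. Thus the identification is an isomorphism; since $\widetilde{W}^{1,p,q}_g([0,T],V,V')$ is complete by the previous corollary, the intersection with the max norm is complete as well. I expect the only delicate point to be the bookkeeping around representatives—consistently identifying an element of $\widetilde{W}^{1,p,q}_g([0,T],V,V')$, a priori an $L^p_g$-equivalence class, with the everywhere-defined $g$-continuous function in its integral representation, and checking this identification respects the intersection structure—rather than any substantive estimate.
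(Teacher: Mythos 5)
Your argument is correct and follows essentially the same route as the paper: the set identification is immediate from the definitions, and the norm equivalence rests on the bound $\|u(0)\|_{V'}\le\|u\|_0\le C\|u\|_{\widetilde{W}^{1,p,q}_g}$ from Lemma~\ref{lempq} together with the comparison between the sum norm and the max norm. Your extra care with the compatible-couple structure and the choice of everywhere-defined representatives is a welcome refinement of what the paper leaves implicit, but it does not change the substance of the proof.
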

\begin{proof} To see this remember that
\begin{equation}\|u\|_{\widetilde{W}^{1,p,q}_g([0,T],V,V')}=\|u\|_{L^p_g([0,T],V)}+\|u'_g\|_{L^q_g([0,T],V')},\end{equation}
and
\begin{equation}
\|u(0)\|_{V'}\le\|u\|_{0} \leq C \|u\|_{\widetilde{W}^{1,p,q}_g([0,T],V,V')},\end{equation}
where $C:=\max\left\{2\mu_g([0,T))^{1-\frac{1}{q}},\widetilde N \mu_g([0,T))^{-\frac{1}{p}}\right\}$. Hence, 
\begin{equation}
\begin{aligned}
& \|u\|_{L^p_g([0,T],V)\cap\widetilde L^q_g([0,T],V')} \\= & 
\max\{\|u\|_{L^p_g([0,T],V)},\|u\|_{\widetilde L^q_g([0,T],V')}\}\\ = & \max\{\|u\|_{L^p_g([0,T],V)},\|u(0)\|_{V'}+\|u'_g\|_{L^q_g([0,T],V')}\}\\ \le & \max\{\|u\|_{\widetilde{W}^{1,p,q}_g([0,T],V,V')},C \|u\|_{\widetilde{W}^{1,p,q}_g([0,T],V,V')}+\|u\|_{\widetilde{W}^{1,p,q}_g([0,T],V,V')}\}\\  =& (C+1)\|u\|_{\widetilde{W}^{1,p,q}_g([0,T],V,V')}. 
\end{aligned}
\end{equation}

On the other hand,
\begin{equation}
\begin{aligned}
& \|u\|_{\widetilde{W}^{1,p,q}_g([0,T],V,V')}= 
\|u\|_{L^p_g([0,T],V)}+\|u'_g\|_{L^q_g([0,T],V')}\\ \le & 2\max\{\|u\|_{L^p_g([0,T],V)},\|u'_g\|_{L^q_g([0,T],V')}\}\\ \le & 2\max\{\|u\|_{L^p_g([0,T],V)},\|u(0)\|_{V'}+\|u'_g\|_{L^q_g([0,T],V')}\}\\ = &2\|u\|_{L^p_g([0,T],V)\cap\widetilde L^q_g([0,T],V')}. 
\end{aligned}
\end{equation}
\end{proof}

\begin{lem}
$\widetilde L^q_g([0,T],V')$ is reflexive.
\end{lem}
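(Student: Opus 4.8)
The plan is to realize $\widetilde L^q_g([0,T],V')$ as a product of two Banach spaces that are (or are readily seen to be) reflexive. Consider the linear map
\[
\Phi\colon \widetilde L^q_g([0,T],V')\longrightarrow V'\times L^q_g([0,T],V'),\qquad \Phi(u)=\bigl(u(0),\widetilde u\bigr),
\]
where $\widetilde u=u'_g$ is the ($g$-a.e.\ unique) density occurring in the definition of $\widetilde L^q_g([0,T],V')$ and $V'\times L^q_g([0,T],V')$ carries the norm $\|(x,v)\|=\|x\|_{V'}+\|v\|_{L^q_g([0,T],V')}$. By the very definition of $\|\cdot\|_{\widetilde L^q_g([0,T],V')}$, $\Phi$ is a linear isometry; it is onto because, given $(x,v)$ in the product, the function $u(t):=x+\int_{[0,t)}v(s)\,\operatorname{d}\mu_g(s)$ belongs to $\widetilde L^q_g([0,T],V')$ (its indefinite Stieltjes integral is $g$-continuous and bounded, exactly as in the proof of Lemma~\ref{lempq}), satisfies $u(0)=x$ and, by Theorem~\ref{td}, $u'_g=v$ $g$-a.e., so $\Phi(u)=(x,v)$. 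Hence $\widetilde L^q_g([0,T],V')\simeq V'\times L^q_g([0,T],V')$ isometrically, and since a finite product of Banach spaces is reflexive precisely when each factor is, it suffices to show that $V'$ and $L^q_g([0,T],V')$ are reflexive; here we take $1<q<\infty$, which is the range relevant in what follows.

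For the first factor, $V$ reflexive gives $V'$ reflexive, and $V$ reflexive and separable forces $V'$ separable as well (the Banach space $V'$ has separable dual $V''=V$, hence is itself separable). For the second factor one invokes the duality theory of Bochner spaces over the finite measure space $([0,T],\mathcal{M}_g,\mu_g)$: a reflexive separable Banach space has the Radon--Nikod\'ym property, so for $1<q<\infty$,
\[
\bigl(L^q_g([0,T],V')\bigr)'\simeq L^{q'}_g([0,T],V'')\simeq L^{q'}_g([0,T],V),\qquad \bigl(L^{q'}_g([0,T],V)\bigr)'\simeq L^q_g([0,T],V'),
\]
and, after checking that the composed identification is the canonical embedding $L^q_g([0,T],V')\hookrightarrow L^q_g([0,T],V')''$, one concludes that $L^q_g([0,T],V')$ is reflexive. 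Consequently $V'\times L^q_g([0,T],V')$ is reflexive, and therefore so is $\widetilde L^q_g([0,T],V')$.

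The only step beyond routine bookkeeping is the Bochner duality $\bigl(L^q_g([0,T],V')\bigr)'\simeq L^{q'}_g([0,T],V)$ together with the verification that it is implemented by the natural pairing; this is where the reflexivity and separability of $V$ are genuinely used (through the Radon--Nikod\'ym property), and it is the point one must be careful to cite correctly (the Bochner-space analogue of \cite[Theorem 8.15]{leoni2009}, or the classical Dunford--Pettis and Phillips results). All the remaining ingredients --- that $\Phi$ is an isometric isomorphism, that separability of $V$ transfers to $V'$, and that reflexivity passes to finite products --- are immediate.
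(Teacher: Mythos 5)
Your proof is correct and follows essentially the same route as the paper: both identify $\widetilde L^q_g([0,T],V')$ isometrically with $V'\times L^q_g([0,T],V')$ via $u\mapsto(u(0),u'_g)$ and then reduce reflexivity to that of the factors through the Bochner--Riesz duality $(L^q_g([0,T],V'))'\simeq L^{q^*}_g([0,T],V)$. You are in fact more careful than the paper in flagging the restriction $1<q<\infty$ and the role of the Radon--Nikod\'ym property (guaranteed here by reflexivity of $V$), both of which the paper leaves implicit.
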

\begin{proof} Take the map
\begin{equation}
\begin{array}{rcl}
\varphi: V' \times L_g^q([0,T],V') & \rightarrow & \widetilde L^q_g([0,T],V') \\
\vspace{0.1cm}
(x,v) & \rightarrow & \varphi(x,v)=u,
\end{array}
\end{equation}
where, $\forall t \in [0,T]$, 
\begin{equation}
u(t)=x+\int_{[0,t)} v(s) \operatorname{d} \mu_g(s)\in V.
\end{equation}
Clearly, $\varphi$ is an isometric isomorphism with inverse
\begin{equation}
\begin{array}{rcl}
\varphi^{-1}:  \widetilde L^q_g([0,T],V') & \rightarrow & V' 
\times L^q_g({[}0,T{]},V') \\ \vspace{0.1cm}  
u & \rightarrow & \varphi^{-1}(u)=(u(0),u'_g).
\end{array}
\end{equation}
 $\varphi$ induces the isomorphism $\varphi^*:\widetilde L^q_g([0,T],V')' \to (V'\times L^q_g({[}0,T{]},V'))'$.
 We know that $(X\times Y)'=X'\times Y'$ with the norm $\|(f,g)\|_{X'\times Y'}=\max\{\|f\|_{X'},\|g\|_{Y'}\}$ --and vice-versa, see \cite[p. 14]{Krein}. Hence,  thanks to Riesz representation theorem (\cite[Theorem 8.17]{leoni2009}), we have that $\widetilde L^q_g([0,T],V')'$ is isomorphic to 
 $V \times L^{q^*}_g([0,T],V)$, where $p^*\in[1,\infty]$ such that $p+p^*=pp^*$, with the norm 
 \begin{equation}
 \|(f,g)\|_{V'\times L^{q^*}_g([0,T],V')}=\max\{\|f\|_{V'},\|g\|_{L^{q^*}_g([0,T],V')}\}.
 \end{equation}
 Taking the dual again, we obtain that $\widetilde L^q_g([0,T],V')$ is reflexive.
\end{proof}
\begin{lem}$(L^p_g([0,T],V),\widetilde L^q_g([0,T],V'))$ is a conjugate couple.
\end{lem}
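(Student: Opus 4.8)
The plan is to verify the two defining properties: that the pair is a \emph{compatible couple}, and that its intersection is dense in each of the two spaces. (We work in the range $1\le p,q<\infty$, the one relevant to the reflexivity statement that follows; for $p$ or $q$ equal to $\infty$ the density below generally fails.) For compatibility I would take the ambient space to be $L^1_g([0,T],V')$, which is Banach, hence Hausdorff. The inclusion $L^p_g([0,T],V)\hookrightarrow L^1_g([0,T],V')$ is continuous by $\|f\|_{L^1_g([0,T],V')}\le\widetilde N\,\mu_g([0,T))^{1-1/p}\|f\|_{L^p_g([0,T],V)}$ and injective since $V\hookrightarrow V'$ is injective; the inclusions $\widetilde L^q_g([0,T],V')\hookrightarrow\mathcal{BC}_g([0,T],V')\hookrightarrow L^1_g([0,T],V')$ are continuous (the first by the estimate $\|u(t)\|_{V'}\le\|u(0)\|_{V'}+\mu_g([0,T))^{1-1/q}\|\widetilde u\|_{L^q_g([0,T],V')}$, essentially as in the proof of Lemma~\ref{lempq}, the second since $\mu_g([0,T])<\infty$), and injective because an element of $\widetilde L^q_g([0,T],V')$ that vanishes $g$-a.e. is, being $g$-continuous with $u(t)=u(0)+\int_{[0,t)}u'_g\operatorname{d}\mu_g$, identically zero---pair with $v\in V$ and apply Theorem~\ref{td}, exactly as in the uniqueness remark after the definition of $\widetilde W^{1,p,q}_g([0,T],V,V')$. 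This yields the compatible couple; and by the lemma identifying $\widetilde W^{1,p,q}_g([0,T],V,V')$ with $L^p_g([0,T],V)\cap\widetilde L^q_g([0,T],V')$, the intersection is that Sobolev-type space.

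For density in $\widetilde L^q_g([0,T],V')$ I would use the isometric isomorphism $\varphi\colon V'\times L^q_g([0,T],V')\to\widetilde L^q_g([0,T],V')$: approximate $(x,\widetilde u)$ by $(x_n,\widetilde{u}_n)$ with $x_n\in V$ (density of $V$ in $V'$) and $\widetilde{u}_n$ simple and $V$-valued (take ordinary simple approximants of $\widetilde u$ in $L^q_g([0,T],V')$ and replace their values by nearby elements of $V$). Then $u_n:=\varphi(x_n,\widetilde{u}_n)$ satisfies $u_n(t)=x_n+\int_{[0,t)}\widetilde{u}_n\operatorname{d}\mu_g\in V$ for every $t$, since the Bochner $g$-integral of a $V$-valued function lies in $V$; being $g$-bounded in $V$ it belongs to $L^p_g([0,T],V)$, hence to the intersection, and $u_n\to\varphi(x,\widetilde u)$ in $\widetilde L^q_g([0,T],V')$.

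For density in $L^p_g([0,T],V)$ I would use that simple $V$-valued functions are dense and that the norm is subadditive; together with the observation that for a scalar $g$-absolutely continuous $\psi$ with bounded $\psi'_g$ the product $x\,\psi$ lies in $L^p_g([0,T],V)\cap\widetilde L^q_g([0,T],V')$ (with $(x\psi)'_g=x\,\psi'_g$), this reduces the problem to approximating, in $L^p_g([0,T])$, a scalar indicator $\chi_A$ by such $\psi$. Here I would discard $A\cap C_g$ (which is $\mu_g$-null), use the outer regularity of the Lebesgue--Stieltjes measure to pass to $\chi_U$ with $U\supseteq A$ open in $[0,T]$ and $\mu_g(U\setminus A)$ small, then to $\chi_W$ with $W$ a finite union of connected components of $U$, so that it remains to approximate a single $\chi_{(a,b)}$ with $\mu_g((a,b))>0$. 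For this I would use a \emph{$g$-trapezoid}, most conveniently written as $\psi=\int_{[0,\cdot)}h\operatorname{d}\mu_g$ for a bounded scalar $h$ of the form $h=\frac{1}{g(\beta)-g(\alpha)}\chi_{[\alpha,\beta)}-\frac{1}{g(\beta')-g(\alpha')}\chi_{[\alpha',\beta')}$ with $\alpha<\beta\le\alpha'<\beta'$: this $\psi$ automatically belongs to $L^\infty_g([0,T])\cap\widetilde L^q_g([0,T])\subseteq L^p_g([0,T])\cap\widetilde L^q_g([0,T])$, vanishes outside $[\alpha,\beta']$, rises $g$-affinely from $0$ to $1$ on $[\alpha,\beta]$, equals $1$ on $[\beta,\alpha']$, and descends symmetrically on $[\alpha',\beta']$. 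One then chooses the four nodes so that $g(\beta)>g(\alpha)$, $g(\beta')>g(\alpha')$, the two ramp regions carry arbitrarily small $\mu_g$-measure, and $[\beta,\alpha']$ matches $(a,b)$ up to a set of small $\mu_g$-measure; the left-continuity of $g$ makes strips shrinking toward a continuity point of $g$ have vanishing $\mu_g$-measure, at a point of $D_g$ no ramp is required (there $\chi_{(a,b)}$ is already $g$-differentiable), and a constancy component of $g$ abutting an endpoint is $\mu_g$-null and gets absorbed into the plateau or the exterior. Then $\psi\to\chi_{(a,b)}$ in $L^p_g([0,T])$ as the ramp regions shrink.

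The main obstacle is precisely this last step: choosing the nodes of the $g$-trapezoids requires a case analysis according to whether each endpoint of $(a,b)$ is an ordinary point of $g$, a point of $D_g$, or an endpoint of a constancy component of $g$, so that every ramp is simultaneously well defined ($g(\beta)>g(\alpha)$) and of arbitrarily small $\mu_g$-measure; the rest of the argument is routine.
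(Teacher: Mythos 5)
Your proof has the same skeleton as the paper's --- exhibit a Hausdorff ambient space to obtain a compatible couple, then show that the intersection $L^p_g([0,T],V)\cap\widetilde L^q_g([0,T],V')\simeq\widetilde W^{1,p,q}_g([0,T],V,V')$ is dense in each factor --- but the resemblance ends there, because the paper merely \emph{asserts} the two dense embeddings with no argument whatsoever, whereas you actually construct the approximating sequences. (The choice of ambient space, your $L^1_g([0,T],V')$ versus the paper's $L^p_g([0,T],V')$, is immaterial.) Your argument for density in $\widetilde L^q_g([0,T],V')$ --- transport the problem through the isometry $\varphi$, approximate $u(0)$ in $V'$ by elements of $V$ and $u'_g$ by $V$-valued simple functions, and note that the resulting primitive is bounded and $V$-valued, hence lies in $L^p_g([0,T],V)$ --- is complete and is precisely the justification the paper owes the reader. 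Density in $L^p_g([0,T],V)$ is the genuinely delicate direction, and your reduction (simple functions, outer regularity, finitely many intervals, one $g$-trapezoid per interval) is the right mechanism; the endpoint case analysis you flag as the main obstacle does close along the lines you sketch: at an endpoint in $D_g$ the jump of $g$ itself realizes the transition exactly and no ramp is needed; at a point where $g$ is continuous and not locally constant, $\mu_g([\alpha,\beta))=g(\beta)-g(\alpha)$ can be made positive and arbitrarily small by left-continuity and $g(t^+)=g(t)$; and an abutting constancy component is $\mu_g$-null, so the interval may be replaced $\mu_g$-a.e.\ by one whose endpoint is the far end of that component, reducing to the previous cases. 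So this is not a gap so much as an honest acknowledgement that the step the paper silently skips is the one that takes work. Your restriction to $1\le p,q<\infty$ is also a genuine point the paper glosses over: for $q=\infty$ the simple functions are no longer dense and the conjugate-couple property becomes doubtful, though nothing is lost for the reflexivity corollary, which only needs $1<p,q<\infty$.
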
 
\begin{proof}
 Observe that we have the continuous inclusion
 \begin{equation}\label{i1}
 L^p_g([0,T],V)\cap\widetilde L^q_g([0,T],V')\simeq\widetilde{W}^{1,p,q}_g([0,T],V,V')\hookrightarrow\mathcal{BC}_g([0,T],V').
 \end{equation}
 Therefore, $(L^p_g([0,T],V),\widetilde L^q_g([0,T],V'))$ is a compatible couple when embedded in $L_g^p([0,T],V')$. Since we have the dense embeddings
 \begin{equation}
\widetilde{W}^{1,p,q}_g([0,T],V,V') \hookrightarrow L^p_g([0,T],V)
 \end{equation}
and 
 \begin{equation} 
\widetilde{W}^{1,p,q}_g([0,T],V,V') \hookrightarrow \widetilde L^q_g([0,T],V'),
 \end{equation}
  $(L^p_g([0,T],V),\widetilde L^q_g([0,T],V'))$ is a conjugate couple. 
\end{proof}

\begin{cor} $\widetilde{W}^{1,p,q}_g([0,T],V,V')$ is reflexive.
\end{cor}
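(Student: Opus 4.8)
The plan is to read off reflexivity from the structural description obtained above. By the isomorphism lemma, $\widetilde{W}^{1,p,q}_g([0,T],V,V')$ is isomorphic to $X\cap Y$, where I abbreviate $X:=L^p_g([0,T],V)$ and $Y:=\widetilde L^q_g([0,T],V')$, and where $(X,Y)$ is a conjugate couple, both spaces being continuously embedded in the Hausdorff space $L^p_g([0,T],V')$. Since reflexivity is a topological-isomorphism invariant, it suffices to prove that $X\cap Y$ is reflexive.

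First I would verify that each of the two ingredients is individually reflexive — this is what silently forces the restriction $1<p,q<\infty$. For $Y=\widetilde L^q_g([0,T],V')$ this is exactly the lemma established just above. For $X=L^p_g([0,T],V)$ I would invoke the vector-valued Riesz representation theorem: $V$ is reflexive, so $V'$ is reflexive and therefore has the Radon--Nikod\'ym property, and $\mu_g$ is a finite measure on $[0,T]$; hence $X'\cong L^{p^*}_g([0,T],V')$ with $\tfrac1p+\tfrac1{p^*}=1$, and a second application, using $V''\cong V$, gives $X''\cong X$. (Equivalently, one quotes directly that $L^p(\mu,V)$ is reflexive whenever $V$ is and $1<p<\infty$.)

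To conclude I would avoid duality altogether. Consider the diagonal map $\iota:X\cap Y\to X\times Y$, $\iota(z)=(z,z)$. As $\|(z,z)\|_{X\times Y}=\|z\|_X+\|z\|_Y$ is equivalent to the intersection norm $\max\{\|z\|_X,\|z\|_Y\}$, the map $\iota$ is an isomorphism onto its range, and that range is closed: if $(z_n,z_n)\to(a,b)$ in $X\times Y$, then $z_n\to a$ in $X$ and $z_n\to b$ in $Y$, hence $z_n$ converges both to $a$ and to $b$ in $L^p_g([0,T],V')$, so $a=b\in X\cap Y$. Now $X\times Y$ is reflexive, being a finite product of reflexive spaces, and a closed subspace of a reflexive Banach space is reflexive; therefore $X\cap Y$, and hence $\widetilde{W}^{1,p,q}_g([0,T],V,V')$, is reflexive.

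A variant staying inside the interpolation-couple language: by Lemma~\ref{lemcc} one has $(X\cap Y)'\cong X'+Y'$; after checking that $(X',Y')$ is again a conjugate couple, a second use of Lemma~\ref{lemcc} gives $(X\cap Y)''\cong X''\cap Y''\cong X\cap Y$. In either route the one place that needs genuine input is the reflexivity — equivalently, the explicit identification of the dual — of $L^p_g([0,T],V)$ and of $\widetilde L^q_g([0,T],V')$; everything else is soft functional analysis, and it is precisely here that the standing assumptions ``$V$ reflexive and separable'' and ``$1<p,q<\infty$'' are used.
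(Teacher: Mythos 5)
Your argument is correct, and your primary route is genuinely different from the paper's. The paper applies Lemma~\ref{lemcc} twice: since $(L^p_g([0,T],V),\widetilde L^q_g([0,T],V'))$ is a conjugate couple, it writes $\widetilde{W}^{1,p,q}_g([0,T],V,V')'=L^p_g([0,T],V)'+\widetilde L^q_g([0,T],V')'$ and then dualizes again to get $\widetilde{W}^{1,p,q}_g([0,T],V,V')''=L^p_g([0,T],V)''\cap\widetilde L^q_g([0,T],V')''=L^p_g([0,T],V)\cap\widetilde L^q_g([0,T],V')$ --- i.e., exactly your ``variant''. Your main route instead realizes the intersection as a closed subspace of the product $L^p_g([0,T],V)\times\widetilde L^q_g([0,T],V')$ via the diagonal map and then uses that reflexivity passes to finite products and closed subspaces; this is softer and sidesteps the second application of Lemma~\ref{lemcc}, whose hypothesis (that the dual pair $(X',Y')$ is again a conjugate couple) the paper does not verify. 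You also make explicit two points the paper leaves implicit: the reflexivity of $L^p_g([0,T],V)$ itself, which the paper never proves (its reflexivity lemma covers only $\widetilde L^q_g([0,T],V')$) and which needs the vector-valued Riesz representation theorem via the Radon--Nikod\'ym property of $V'$; and the fact that either argument genuinely requires $1<p,q<\infty$, even though the corollary is stated for $1\le p,q\le\infty$. Both routes rest on the same nontrivial inputs (reflexivity of the two factors plus the compatible-couple structure); yours trades the interpolation-theoretic bookkeeping for elementary subspace and product arguments and is, if anything, the more complete of the two.
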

\begin{proof}
 Using Lemma~\ref{lemcc} and the fact that $(L^p_g([0,T],V),\widetilde L^q_g([0,T],V'))$ is a conjugate couple,
 \begin{equation}\widetilde{W}^{1,p,q}_g([0,T],V,V')'= L^p_g([0,T],V)'+\widetilde L^q_g([0,T],V')'.\end{equation}
 Thus,
 \begin{equation}
 \begin{array}{rcl}
 \widetilde{W}^{1,p,q}_g([0,T],V,V')''&=&
 L^p_g([0,T],V)''\cap\widetilde L^q_g([0,T],V')'' \\ \vspace{0.1cm}
&=&  L^p_g([0,T],V)\cap\widetilde L^q_g([0,T],V'),
\end{array}
\end{equation}
 and so, $\widetilde{W}^{1,p,q}_g([0,T],V,V')$ is reflexive.
\end{proof}

\section{The concept of solution}

In this section we will establish the concept of solution of 
system~\eqref{eq:estado1}. In order to properly motivate this 
concept, we will proceed by analogy with the classic case. So, 
we consider the following system:
\begin{equation} \label{eq:clasical}
\begin{dcases}
 \frac{\partial u}{\partial t} -\nabla \cdot (k_1 \nabla u) + k_2 u = f, & \text{in}\; 
(0,T) \times \Omega, \vspace{0.1cm} \\
u = 0,& \text{on}\; (0,T) \times \partial \Omega, \vspace{0.1cm} \\
u(0,x) = u_0(x), & \text{in}\; \Omega,
\end{dcases}
\end{equation}
with $f \in L^2([0,T],L^2(\Omega))$, $u_0 \in L^2(\Omega)$. If we denote by 
\begin{equation}
W^{1,p,q}([0,T],V,V')=\left\{ u \in L^p([0,T],V)\ :\ 
\frac{\operatorname{d} u}{\operatorname{d} t} \in L^q([0,T],V')\right\},
\end{equation}
where $\frac{\operatorname{d} u}{\operatorname{d} t}$ is distributional derivative of $u$. We have 
that there exists an unique element $u \in W^{1,2,2}([0,T],H_0^1(\Omega),
H^{-1}(\Omega))\cap \mathcal{C}([0,T],L^2(\Omega))$ (where $H_0^1(\Omega) = 
\{u \in W^{1,2}(\Omega):\; u|{_{\partial \Omega}}=0\}$ and 
$H^{-1}(\Omega)=H_0^1(\Omega)'$) such that 
$u(0)=u_0$ and, for every $ v \in H_0^1(\Omega)$, $u$ satisfies the following 
variational formulation in $\mathcal{D}'(0,T)$:
\begin{equation}
\frac{\operatorname{d}}{\operatorname{d} t} \left(u(t),v\right)_{L^2(\Omega)} +
k_1 \int_{\Omega} \nabla u(t) \cdot \nabla v \, \operatorname{d} x
+
k_2 \int_{\Omega} u(t) v\, \operatorname{d} x = 
\int_{\Omega} f(t) v\, \operatorname{d} x,
\end{equation} 
Thus, the distributional derivative is such that
\begin{equation}
\frac{\operatorname{d} u}{\operatorname{d} t}=f+\nabla \cdot (k_1 \nabla u)-k_2 u \in L^2([0,T],H^{-1}(\Omega)),
\end{equation}
and then, by  \cite[Proposition A.6]{BREZIS1973}, we can identify $u$ with 
an element of the space $\widetilde{W}^{1,2,2}([0,T],H_0^1(\Omega),H^{-1}(\Omega)) =$
\begin{equation}
\begin{array}{c}
\displaystyle
 
\Big\{u \in L^2([0,T],H_0^1(\Omega))\ :\ 
\exists \widetilde{u} \in L^2([0,T],H^{-1}(\Omega)),\\ 
\vspace{0.1cm} 
\displaystyle
u(t)=u(0)+\int_{[0,t]} \widetilde{u}(s) \operatorname{d} s \in H^{-1}(\Omega),
\, \forall t \in [0,T] \Big\}
\end{array}
\end{equation}
with $\widetilde{u}=\frac{\operatorname{d} u}{\operatorname{d} t}$ almost everywhere in $[0,T]$. So, we have that the 
spaces \begin{equation}\widetilde{W}^{1,2,2}([0,T],H_0^1(\Omega),H^{-1}(\Omega))\quad \text{and} \quad W^{1,2,2}([0,T],H_0^1(\Omega),H^{-1}(\Omega))\end{equation} are essentially the same and, for every 
$ t \in [0,T]$, and $v \in H_0^1(\Omega)$ we have
\begin{equation} \label{eq:solinicial}
\left<u(t),v\right>=
\left<u_0,v\right>+\int_{[0,t]} \int_{\Omega} f(s)\,v- k_1\nabla u(s) \cdot \nabla v-
k_2 u(s)\, v \, \operatorname{d} x \, \operatorname{d} s,
\end{equation}
Moreover, thanks to the Lebesgue Differentiation Theorem \cite[Theorem 1.6]{SHOWALTER1997}, there exists
\begin{equation}
\lim_{h \rightarrow 0} \frac{u(t+h)-u(t)}{h}=f(t)+\nabla \cdot (k_1 \nabla u(t))-k_2 u(t) \in 
H^{-1}(\Omega),
\end{equation}
for a.e. $t\in [0,T]$. That is, there exists the classical derivative 
in time, $u'(t)\in H^{-1}(\Omega)$, almost everywhere in $[0,T]$ and it 
satisfies 
\begin{equation}
u'(t)=f(t)+\nabla \cdot (k_1 \nabla u(t))-k_2 u(t) \in 
H^{-1}(\Omega),\; \text{for a.e.}\; t \in [0,T].
\end{equation}

In our case, we cannot define the space 
$W^{1,2,2}_g([0,T],H_0^1(\Omega),H^{-1}(\Omega))$ because we 
don't have a $g$-distributional derivative. Still, we can use the generalized 
Lebesgue’s Differentiation Theorem (Theorem~\ref{td}) and define the solution of 
system~\eqref{eq:estado1} in the following way.

\begin{dfn}[Solutions of system~\eqref{eq:estado1}] \label{definition1} Given 
 $u_0\in L^2(\Omega)$ and $f \in L^2_g([0,T],L^2(\Omega))$, we say that
 \begin{equation}
 u \in \widetilde{W}_g^{1,2,2}([0,T],H^1_0(\Omega),H^{-1}(\Omega)) \cap 
 \mathcal{BC}_g([0,T],L^2(\Omega))
 \end{equation}
 is a solution of equation~\eqref{eq:estado1} if, for every $ v \in V$ and every 
 $ t \in [0,T]$,
 \begin{equation} \label{eq:estado2}
 \begin{aligned}
 (u(t),v) = & 
 (u_0,v)_H +\int_{[0,t)} (f(s),v) \, \operatorname{d}\mu_g(s) \vspace{0.2cm} \\ & 
 -k_1 \int_{[0,t)}(\nabla u(s),\nabla v) \, \operatorname{d} \mu_g(s)- 
 k_2 \int_{[0,t)} (u(s),v) \, \operatorname{d}\mu_g(s).
 \end{aligned}
 \end{equation}
\end{dfn}

In the following corollary, a direct consequence of Theorem~\ref{td}, we will see that we can 
recover the $g$ derivative of 
the solution $g$-almost everywhere in time.

\begin{cor} If $u$ is a solution of equation~\eqref{eq:estado1} then there exists a $g$-measurable set, $N\subset [0,T]$, with 
 $\mu_g(N)=0$ such that 
 \begin{equation}
 u_g'(t)=f(t) + \nabla \cdot (k_1 \nabla u)(t) - k_2 u(t) \in 
 H^{-1}(\Omega),\; \forall t \in [0,T]\setminus N.
 \end{equation}
 \end{cor}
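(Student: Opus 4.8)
The plan is to recognize the right-hand side of~\eqref{eq:estado2} as the indefinite Lebesgue--Stieltjes integral of a fixed $H^{-1}(\Omega)$-valued function and then invoke Theorem~\ref{td}. First I would set
\[
w(s):=f(s)+\nabla\cdot(k_1\nabla u(s))-k_2u(s)\in H^{-1}(\Omega),
\]
where $\nabla\cdot(k_1\nabla u(s))$ denotes the functional $v\mapsto-k_1(\nabla u(s),\nabla v)$ on $H^1_0(\Omega)$. The first thing to check is that $w\in L^2_g([0,T],H^{-1}(\Omega))\subset L^1_g([0,T],H^{-1}(\Omega))$ (the inclusion being legitimate since $\mu_g([0,T])<\infty$): the operator $A:v\in H^1_0(\Omega)\mapsto\nabla\cdot(k_1\nabla v)\in H^{-1}(\Omega)$ is bounded, so $A\circ u$ is strongly $g$-measurable with $\mu_g$-integrable norm because $u\in L^2_g([0,T],H^1_0(\Omega))$; similarly $k_2u\in L^2_g([0,T],H^1_0(\Omega))\hookrightarrow L^2_g([0,T],H^{-1}(\Omega))$ and $f\in L^2_g([0,T],L^2(\Omega))\hookrightarrow L^2_g([0,T],H^{-1}(\Omega))$ via $H^1_0(\Omega)\hookrightarrow L^2(\Omega)\hookrightarrow H^{-1}(\Omega)$. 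Then $F(t):=\int_{[0,t)}w(s)\,\operatorname{d}\mu_g(s)\in H^{-1}(\Omega)$ is well defined for every $t\in[0,T]$.

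Next I would pair $F(t)$ against an arbitrary $v\in H^1_0(\Omega)$. By the lemma on interchanging a bounded linear map with a Bochner $g$-integral (\cite[\S V.5, Corollary 2]{YOSIDA1995}) applied to the evaluation functional $\left<\cdot,v\right>$,
\[
\left<F(t),v\right>=\int_{[0,t)}\left<w(s),v\right>\,\operatorname{d}\mu_g(s)=\int_{[0,t)}\big[(f(s),v)-k_1(\nabla u(s),\nabla v)-k_2(u(s),v)\big]\,\operatorname{d}\mu_g(s),
\]
so~\eqref{eq:estado2} reads $(u(t),v)=(u_0,v)+\left<F(t),v\right>$ for every $v\in H^1_0(\Omega)$ and every $t\in[0,T]$. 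Evaluating at $t=0$ and using the density of $H^1_0(\Omega)$ in $L^2(\Omega)$ gives $u(0)=u_0$ in $L^2(\Omega)$; for general $t$, since the functionals $\left<\cdot,v\right>$ with $v\in H^1_0(\Omega)$ separate the points of $H^{-1}(\Omega)$, I would conclude
\[
u(t)=u_0+F(t)=u_0+\int_{[0,t)}w(s)\,\operatorname{d}\mu_g(s)\quad\text{in }H^{-1}(\Omega),\ \forall t\in[0,T].
\]

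Finally, I would apply Theorem~\ref{td} with the Banach space $H^{-1}(\Omega)$ (reflexive and separable, being the dual of the separable Hilbert space $H^1_0(\Omega)$) in the role of $V$ and $w$ in the role of $f$: there is a $g$-measurable $N\subset[0,T]$ with $\mu_g(N)=0$ and $F_g'(t)=w(t)$ for all $t\in[0,T]\setminus N$. Because $u$ and $F$ differ only by the constant $u_0\in H^{-1}(\Omega)$, their Stieltjes difference quotients coincide, whence $u_g'(t)=w(t)=f(t)+\nabla\cdot(k_1\nabla u(t))-k_2u(t)$ in $H^{-1}(\Omega)$ for all $t\in[0,T]\setminus N$. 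The only step that is not pure bookkeeping is the measurability/integrability verification in the first paragraph, in particular that $s\mapsto\nabla\cdot(k_1\nabla u(s))$ genuinely defines an element of $L^2_g([0,T],H^{-1}(\Omega))$; this is where one uses boundedness of the elliptic operator from $H^1_0(\Omega)$ into $H^{-1}(\Omega)$ together with Bochner's theorem, and everything else follows by a direct invocation of Theorem~\ref{td}.
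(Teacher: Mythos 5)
Your proposal is correct and follows exactly the route the paper intends: the corollary is stated there as a direct consequence of Theorem~\ref{td}, with the proof omitted, and your argument simply fills in the routine verifications (that $s\mapsto f(s)+\nabla\cdot(k_1\nabla u(s))-k_2u(s)$ lies in $L^2_g([0,T],H^{-1}(\Omega))$, that~\eqref{eq:estado2} is the weak form of the $H^{-1}(\Omega)$-valued identity $u(t)=u_0+\int_{[0,t)}w(s)\,\operatorname{d}\mu_g(s)$, and that differentiating via Theorem~\ref{td} yields the claim $g$-a.e.). No gaps.
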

\section{An existence result}

In this section we will study the existence and uniqueness of solution of the equation~\eqref{eq:estado1} where $\Omega \subset \mathbb{R}^3$ is a domain with a sufficiently regular boundary
$\partial \Omega$. We take
$V=H_0^1(\Omega)$ and $H=L^2(\Omega)$ in the functional framework of the previous section and we use the classical diagonalization method --see \cite{DAUTRAY1992}-- in order to prove existence of solution. The fundamental goal is to recover those results known for the case $D_g=\varnothing$.

Let us establish some notation. Let $\{w_k\}_{k \in \mathbb{N}}$ be an eigenvector basis of $H_0^1(\Omega)$, orthonormal with respect to $L^2(\Omega)$, related to the following spectral problem
\begin{equation}
\begin{dcases}
w_n \in H_0^1(\Omega), \; \forall n \in \mathbb{N}, \\
k_1 \int_{\Omega} \nabla w_n \cdot \nabla v \, \operatorname{d} x + 
k_2 \int_{\Omega} w_n v \, \operatorname{d} x= \lambda_n 
\int_{\Omega} w_n v \, \operatorname{d} x, \; \forall v \in H_0^1(\Omega),\; 
\forall n \in \mathbb{N},
\end{dcases}
\end{equation}
where $0<\lambda_1<\lambda_2 <\cdots\to\infty$ and such that $\left\{ {w_n}/{\sqrt{\lambda_n}} \right\}_{n\in
 \mathbb{N}}$ is a basis $H_0^1(\Omega)$ for the scalar product of $H_0^1(\Omega)$, 
\begin{equation}\label{eq:escalar}
(u,v)= \int_{\Omega} (k_1 \nabla u \cdot \nabla v + k_2 u v ) \operatorname{d} x.
\end{equation}
Let $\Delta g(t)=g(t^+)-g(t)$ for any function $g$. For our next result we recall the following.
\begin{thm}[{\cite[Lemma 6.4]{POUSO2017}}]\label{teosol}
 Let $x_0\in{\mathbb R}$, $h,d\in L^1_g([a,b))$, with $d(t)\Delta g(t)\ne 1$ for every $t\in[a,b)\cap D_g$ and $\sum_{t\in[a,b)\cap D_g}|\ln|1-d(t)\Delta g(t)||<\infty$. Then
\begin{equation}
\begin{dcases}
x'_g(t)+d(t)x(t)= h(t)& \text{for }g\text{-a.e. in } [a,b),\\
x(a) = x_0.& 
\end{dcases}
\end{equation} 
 has a unique solution $x\in{\mathcal A}{\mathcal C}_g([a,b])$.
 \end{thm}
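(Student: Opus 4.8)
The plan is to establish existence through an explicit integrating-factor construction and uniqueness by dividing out the homogeneous solution; every delicate point is concentrated at the atoms of $\mu_g$, i.e.\ at the points of $D_g$. First I would note that, since $\mu_g$ gives zero measure to the connected components of $C_g$, the clause ``$g$-a.e.\ in $[a,b)$'' automatically disregards $C_g$, and that, combining the fundamental theorem of calculus for the Stieltjes derivative (Theorem~\ref{PoRo}) with the characterisation of absolutely $g$-continuous functions as indefinite Stieltjes integrals of their $g$-derivatives, a function $x\in{\mathcal A}{\mathcal C}_g([a,b])$ (which is in particular bounded, so that $d\,x\in L^1_g([a,b))$) solves the initial value problem if and only if
\[
x(t)=x_0+\int_{[a,t)}\bigl[h(s)-d(s)\,x(s)\bigr]\operatorname{d}\mu_g(s)\qquad\text{for every }t\in[a,b].
\]

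Next I would build the Stieltjes exponential $v:[a,b]\to{\mathbb R}$ given by
\[
v(t)=\exp\!\left(-\int_{[a,t)\setminus D_g}d(s)\operatorname{d}\mu_g(s)\right)\prod_{s\in[a,t)\cap D_g}\bigl(1-d(s)\Delta g(s)\bigr),
\]
using the decomposition of $\int_{[a,t)}(\cdot)\operatorname{d}\mu_g$ into its non-atomic part over $[a,t)\setminus D_g$ and the series $\sum_{s\in[a,t)\cap D_g}\Delta g(s)\,(\cdot)$. The hypotheses $\sum_{t\in[a,b)\cap D_g}|\ln|1-d(t)\Delta g(t)||<\infty$ and $d(t)\Delta g(t)\ne1$ guarantee that the infinite product converges to a finite nonzero limit, so that $v\in{\mathcal A}{\mathcal C}_g([a,b])$, $v(a)=1$, and both $v$ and $1/v$ are bounded on $[a,b]$. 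A computation separating the non-atomic and atomic parts of $\mu_g$ then shows $v'_g=-d\,v$ $g$-a.e.\ in $[a,b)$ and $v(t^{+})=(1-d(t)\Delta g(t))\,v(t)$ for $t\in[a,b)\cap D_g$.

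With $v$ available I would solve the inhomogeneous problem by variation of parameters: since $1/v(\cdot^{+})$ is bounded and $h\in L^1_g([a,b))$, the function $y(t)=x_0+\int_{[a,t)}h(s)/v(s^{+})\operatorname{d}\mu_g(s)$ belongs to ${\mathcal A}{\mathcal C}_g([a,b])$ and satisfies $y'_g=h/v(\cdot^{+})$ $g$-a.e.\ (Theorem~\ref{PoRo}); hence so does $x:=v\,y$. Invoking the product rule for the Stieltjes derivative --- which at a point $t\in D_g$ reads $(v\,y)'_g(t)=v'_g(t)\,y(t)+v(t^{+})\,y'_g(t)$ --- one verifies that $x'_g+d\,x=h$ $g$-a.e.; evaluating $v$ at $s^{+}$ rather than at $s$ inside the integral is exactly what makes the atomic terms telescope and cancel. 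Since $x(a)=v(a)\,y(a)=x_0$, this $x$ is a solution. For uniqueness, if $x_1,x_2$ are both solutions then $w:=x_1-x_2$ satisfies $w'_g=-d\,w$, $w(a)=0$; putting $z:=w/v\in{\mathcal A}{\mathcal C}_g([a,b])$ (licit because $v$ is bounded away from $0$) and applying the product rule to $v\cdot(1/v)=1$ --- which gives $(1/v)'_g=d/v$ off $D_g$ and $(1/v)'_g=d/v(\cdot^{+})$ on $D_g$ --- together with the identity $w(t^{+})/v(t^{+})=w(t)/v(t)$ for $t\in D_g$ (here $d(t)\Delta g(t)\ne1$ is used again), one obtains $z'_g=0$ $g$-a.e., so $z$ is constant, $z\equiv z(a)=0$, and $x_1=x_2$.

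The step I expect to be the real obstacle is the analysis at $D_g$: verifying that the product defining $v$ converges to a finite nonzero value and that $v$ and $1/v$ stay bounded, which is where the summability hypothesis and $d(t)\Delta g(t)\ne1$ enter in an essential way, and then carrying out the product-rule computations so that the jump contributions telescope correctly in both the existence and the uniqueness arguments. An alternative for the existence half would be a Banach fixed-point argument for the integral operator displayed above, acting on the space of bounded $g$-continuous functions on $[a,b]$ and iterated until some power of it becomes a contraction; the explicit integrating factor is, however, more transparent and has the added benefit of producing the variation-of-parameters formula for the solution.
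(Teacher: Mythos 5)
This theorem is quoted from \cite[Lemma 6.4]{POUSO2017} rather than proved in the paper, but the explicit solution formula the paper reproduces in Part 1 of its existence proof --- the Stieltjes exponential $\widehat e^{\,k}$ built from the modified density $\widehat d^{\,k}$ (with $-\ln|1-\lambda_k\Delta g(t)|/\Delta g(t)$ on the atoms and the $(-1)^j$ sign corrections on the finite set $T^-_{\widetilde d^k}$), together with $\widetilde f^{\,k}=f^k/(1-\lambda_k\Delta g)$ --- shows that your construction is exactly the one used there: your $v$ is $(\widehat e)^{-1}$ and your $h/v(\cdot^{+})$ is $\widehat e\,\widetilde h$. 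Your integrating-factor and variation-of-parameters argument, including the convergence of the product from the log-summability hypothesis, the finiteness of the set of negative factors, and the telescoping of the jump terms in both the existence and uniqueness steps, is correct and essentially the same approach as the cited proof.
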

\begin{theorem} [Existence of solution of the system~\eqref{eq:estado1}] 
 Let $u_0\in L^2(\Omega)$, $f \in L^2_g([0,T],L^2(\Omega))$ and
 $g$ a nondecreasing function, continuous in a neighborhood of $t=0$ and left-continuous in $(0,T]$.
 Assume that, for every $n\in{\mathbb N}$,
\begin{itemize}
 \item[\rm{(H1)}]
 $\displaystyle\lambda_n \Delta g(t) \neq 1 ,\; \forall t \in [0,T] \cap D_g$, $\displaystyle\sum_{t\in[a,b)\cap D_g}|\ln|1-\lambda_n\Delta g(t)||<\infty$,
 \item[\rm{(H2)}]
 $\displaystyle e^{-2 \lambda_n \,\mu_g([0,t)\setminus D_g)} 
 \prod_{u \in [0,t) \cap D_g} |1-\lambda_n \Delta g(u)|^2 \leq C_1, \;\forall t \in [0,T]
 $,
\item[\rm{(H3)}] $\displaystyle \int_{[0,T)} \lambda_n 
 e^{-2 \lambda_n \,\mu_g([0,t)\setminus D_g)} 
 \prod_{u \in [0,t) \cap D_g} |1-\lambda_n \Delta g(u)|^2\, \operatorname{d}\mu_g(t) 
 \leq C_1$,
 \item[\rm{(H4)}]
 $\displaystyle \int_{[0,t)} e^{-2 \lambda_n \,\mu_g([s,t)\setminus D_g)} 
 \frac{\prod_{u \in [s,t) \cap D_g} |1-\lambda_n \Delta g(u)|^2}{
 |1-\lambda_n \Delta g(s)|^2} \,\operatorname{d}\mu_g(s) \leq C_2,
 \: \forall t \in [0,T]$,
 \item[\rm{(H5)}]
 $\displaystyle \int_{[0,T)} \int_{[0,t)} \lambda_n e^{-2 \lambda_n \,\mu_g([s,t)\setminus D_g)} 
 \frac{\prod_{u \in [s,t) \cap D_g} |1-\lambda_n \Delta g(u)|^2}{
 |1-\lambda_n \Delta g(s)|^2} \,\operatorname{d}\mu_g(s)\, \operatorname{d} \mu_g(t)$\vspace*{.7em}\\$\leq C_2$, 
\end{itemize}
 where $C_1,C_2\in{\mathbb R}^+$ are constants. Then there exists
 \begin{equation}
 u \in \widetilde{W}_g^{1,2,2}([0,T],H^1_0(\Omega),H^{-1}(\Omega)) \cap 
 \mathcal{BC}_g([0,T],L^2(\Omega)),
 \end{equation}
 unique solution of the equation~\eqref{eq:estado1} in the sense of the formulation
~\eqref{eq:estado2}, such that satisfies the following bounds with respect to the data
 \begin{equation}\label{eq:acotaec}
 \begin{aligned}
 & \|u\|_{L^{\infty}_g([0,T],L^2(\Omega))}+
 \|u\|_{L^2_g([0,T],H^1_0(\Omega))}
 +\|u_g'\|_{L_g^{2}([0,T],H^{-1}(\Omega))}
 \\ \leq & \widehat{C}_1 \|u_0\|_{L^2(\Omega)}+
 \widehat{C}_2 \|f\|_{L_g^{2}([0,T],L^2(\Omega))},
 \end{aligned}
 \end{equation}
 where $\widehat{C}_1,\widehat{C}_2\in{\mathbb R}^+$ are constants.
\end{theorem}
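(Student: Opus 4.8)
The plan is to use the diagonalization (Galerkin) method with the spectral basis $\{w_k\}$, which here has the pleasant feature that it turns the variational formulation~\eqref{eq:estado2} into a \emph{decoupled} countable family of scalar linear Stieltjes equations, so that Theorem~\ref{teosol} applies mode by mode. First I would set $f_k(t):=(f(t),w_k)_{L^2(\Omega)}$; since $\mu_g([0,T))<\infty$ one has $f_k\in L^2_g([0,T])\subset L^1_g([0,T))$, and Parseval gives $\sum_k\|f_k\|_{L^2_g([0,T])}^2=\|f\|_{L^2_g([0,T],L^2(\Omega))}^2$ and $\sum_k|(u_0,w_k)_{L^2}|^2=\|u_0\|_{L^2(\Omega)}^2$. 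For each $k$ I apply Theorem~\ref{teosol} with $d\equiv\lambda_k$ (constant, so the hypotheses on $d$ there reduce exactly to (H1)), $h=f_k$ and $x_0=(u_0,w_k)_{L^2}$, obtaining a unique $c_k\in\mathcal{AC}_g([0,T])$ with $(c_k)'_g+\lambda_k c_k=f_k$ $g$-a.e.\ and $c_k(0)=(u_0,w_k)_{L^2}$ (the continuity of $g$ near $0$ guarantees this is the genuine initial value). The $g$-exponential then yields the representation
\[
c_k(t)=c_k(0)\,e_{-\lambda_k}(t,0)+\int_{[0,t)}\frac{e_{-\lambda_k}(t,s)}{1-\lambda_k\Delta g(s)}\,f_k(s)\,\operatorname{d}\mu_g(s),
\]
\[
e_{-\lambda_k}(t,s):=e^{-\lambda_k\,\mu_g([s,t)\setminus D_g)}\prod_{u\in[s,t)\cap D_g}\bigl(1-\lambda_k\Delta g(u)\bigr),
\]
and these are precisely the kernels appearing in (H2)--(H5).

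Next I would set $u_m:=\sum_{k=1}^m c_k w_k$. Because $\{w_k\}$ is $L^2$-orthonormal, $\{w_k/\sqrt{\lambda_k}\}$ is orthonormal for~\eqref{eq:escalar}, and $\{\sqrt{\lambda_k}\,w_k\}$ is orthonormal for the $H^{-1}$ norm, we get $\|u_m(t)\|_{L^2}^2=\sum_{k\le m}|c_k(t)|^2$, $\|u_m(t)\|_{H^1_0}^2=\sum_{k\le m}\lambda_k|c_k(t)|^2$ and $\|(u_m)'_g(t)\|_{H^{-1}}^2=\sum_{k\le m}\lambda_k^{-1}|(c_k)'_g(t)|^2$. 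Splitting $c_k=c_k^{(0)}+c_k^{(f)}$ into initial-data and forcing parts, a Cauchy--Schwarz estimate on the forcing term (using $\int_{[0,t)}|f_k|^2\operatorname{d}\mu_g\le\|f_k\|_{L^2_g}^2$) converts (H2)--(H5) into the four mode-wise bounds $\sup_t|c_k^{(0)}(t)|^2\le C_1|c_k(0)|^2$, $\sup_t|c_k^{(f)}(t)|^2\le C_2\|f_k\|_{L^2_g}^2$, $\lambda_k\int_{[0,T)}|c_k^{(0)}|^2\operatorname{d}\mu_g\le C_1|c_k(0)|^2$ and $\lambda_k\int_{[0,T)}|c_k^{(f)}|^2\operatorname{d}\mu_g\le C_2\|f_k\|_{L^2_g}^2$, respectively. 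Summing over $k$, and using $(c_k)'_g=f_k-\lambda_k c_k$ together with $\lambda_k\ge\lambda_1$ for the derivative, yields bounds uniform in $m$ for $\|u_m\|_{L^\infty_g([0,T],L^2)}$, $\|u_m\|_{L^2_g([0,T],H^1_0)}$ and $\|(u_m)'_g\|_{L^2_g([0,T],H^{-1})}$ of the form $\widehat C_1\|u_0\|_{L^2}+\widehat C_2\|f\|_{L^2_g(L^2)}$; in particular $\sum_k\sup_t|c_k(t)|^2<\infty$, which is the estimate I will lean on most.

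Then I would pass to the limit. The last summability makes $u(t):=\sum_k c_k(t)w_k$ converge in $L^2(\Omega)$ for every $t$, and combined with the $g$-continuity of each $c_k$ and a tail estimate uniform in $t$ it gives $u\in\mathcal{BC}_g([0,T],L^2(\Omega))$; the previous bounds give $u\in L^2_g([0,T],H^1_0(\Omega))$ and that $\psi:=\sum_k(c_k)'_g w_k$ converges in $L^2_g([0,T],H^{-1}(\Omega))$. Passing the finite-sum identity $u_m(t)=\sum_{k\le m}c_k(0)w_k+\int_{[0,t)}\sum_{k\le m}(c_k)'_g(s)w_k\operatorname{d}\mu_g(s)$ to the limit (continuity of $v\mapsto\int_{[0,t)}v\operatorname{d}\mu_g$ from $L^1_g(H^{-1})$ to $H^{-1}$, and $L^2_g\hookrightarrow L^1_g$) shows $u(t)=u_0+\int_{[0,t)}\psi(s)\operatorname{d}\mu_g(s)$ in $H^{-1}$, so $u\in\widetilde W^{1,2,2}_g([0,T],H^1_0(\Omega),H^{-1}(\Omega))$ with $u'_g=\psi$, and~\eqref{eq:acotaec} follows. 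Taking $v=w_j$ in~\eqref{eq:estado2} and using $k_1(\nabla u,\nabla w_j)+k_2(u,w_j)=(u,w_j)_{H^1_0}=\lambda_j(u,w_j)_{L^2}=\lambda_j c_j$ and $(f,w_j)=f_j$ shows~\eqref{eq:estado2} is exactly the integral form of the $j$-th scalar equation, which $c_j$ satisfies; hence~\eqref{eq:estado2} holds for $v\in\operatorname{span}\{w_j\}$, and since both sides are continuous linear functionals of $v\in H^1_0(\Omega)$ (Cauchy--Schwarz plus $u\in\mathcal{BC}_g(L^2)\cap L^1_g(H^1_0)$) and $\operatorname{span}\{w_j\}$ is dense in $H^1_0(\Omega)$, it holds for every $v$. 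For uniqueness, the difference $w$ of two solutions has $w(0)=0$ and its coefficients $d_j(t):=(w(t),w_j)_{L^2}$ satisfy $d_j(t)=-\lambda_j\int_{[0,t)}d_j\operatorname{d}\mu_g$; since $w\in\mathcal{BC}_g(L^2)$ makes $d_j$ bounded and $g$-continuous, $d_j\in\mathcal{AC}_g([0,T])$ with $(d_j)'_g+\lambda_j d_j=0$, $d_j(0)=0$, so $d_j\equiv0$ by the uniqueness part of Theorem~\ref{teosol}, and hence $w\equiv0$ because $\{w_j\}$ is total in $L^2(\Omega)$.

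The hard part will be the first two steps: producing the correct $g$-exponential solution of the scalar Stieltjes equation and, above all, recognizing that the five somewhat opaque hypotheses (H1)--(H5) are precisely (i) what Theorem~\ref{teosol} requires mode by mode and (ii) the four bounds that must be \emph{summable in $k$} in order to upgrade the formal Galerkin scheme to genuine convergence in $L^\infty_g(L^2)\cap L^2_g(H^1_0)$ with $g$-derivative in $L^2_g(H^{-1})$ and, via $\sum_k\sup_t|c_k(t)|^2<\infty$, to $g$-continuity into $L^2(\Omega)$. Once those estimates are secured, the passage to the limit, the density argument for general test functions, and the (Grönwall-free, Theorem~\ref{teosol}-based) uniqueness are routine adaptations of the classical parabolic theory to Stieltjes calculus.
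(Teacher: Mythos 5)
Your proposal is correct and follows essentially the same route as the paper: diagonalization over the spectral basis, Theorem~\ref{teosol} applied mode by mode with the explicit $g$-exponential representation, hypotheses (H2)--(H5) used exactly as the four summable mode-wise bounds, then passage to the limit, density in $H_0^1(\Omega)$, and uniqueness reduced to the scalar uniqueness of Theorem~\ref{teosol}. The only cosmetic differences are that the paper bounds $|\xi^k|^2$ via the parallelogram law rather than splitting into initial-data and forcing parts, and proves uniqueness by identifying the coefficients of a second solution directly rather than via the difference of two solutions.
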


\begin{proof} In order to make a clearer proof, we will divide it into five parts. In the first part we will approximate problem~\eqref{eq:estado2} using the functions of the spectral basis. Then, in Part 2, we will obtain bounds for the solutions associated to the discrete problem. In the third part, we will analyze how to take the limit and recover a solution of the continuous problem. Later, in Part 4, we will analyze the continuity with respect to the data. In the last part we will prove the uniqueness of solution.

 \textbf{\textbullet\ Part 1, spectral basis approximation.}
 
 Given $k \in \mathbb{N}$, 
 let us write
 \begin{equation}
 u_0^k=(u_0,w_k), \quad
 f^k(t)=(f(t),w_k),
 \end{equation}
 for every $k=1,\ldots,n$. We have that
 \begin{equation}
 u_0=
 \sum_{k=1}^{\infty} u_0^k w_k, \quad
 f(t)=
 \sum_{k=1}^{\infty} f^k(t) w_k.
 \end{equation}
 We look for a solution of the form
 \begin{equation}
 u(t)=\sum_{k=1}^{\infty} \xi^k(t) w_k,
 \end{equation}
 which, substituting formally in~\eqref{eq:estado2},
 \begin{equation} \label{eq:galerkin1}
 \begin{aligned}
 (u(t),w_k)=&(u_{0},w_k)+\int_{[0,t)} (f(s),w_k)\,\operatorname{d}\mu_g(s)
 \\&-k_1 \int_{[0,t)} \int_{\Omega} \nabla u(s) \cdot \nabla w_k \, \operatorname{d} x \, \operatorname{d}\mu_g(s)
 \\
 & -
 k_2
 \int_{[0,t)} \int_{\Omega} (u(s), w_k) \, \operatorname{d} x \, \operatorname{d}\mu_g(s),\; \forall t \in [0,T], \; 
 \; k \in \mathbb{N},
 \end{aligned}
 \end{equation}
 it will satisfy, for every $k\in \mathbb{N}$, the approximated problem
 \begin{equation} \label{eq:galerkin2}
 \xi^k(t)=u_{0}^k+\int_{[0,t)}[ f^k(s)
 - \lambda_k \, \xi^k(s)
 ]\operatorname{d}\mu_g(s) ,\; \forall t \in [0,T].
 \end{equation}
 Thanks to the Fundamental Theorem of Calculus for the Lebesgue-Stieltjes integral (see \cite[Theorem 5.1]{POUSO2017}) that the previous problem is equivalent to
 \begin{equation} \label{eq:galerkin3}
 \begin{dcases}
 (\xi^k)_g'(t) + \lambda_k \,\xi^k(t)=f^k(t),\; 
 \text{for $g$-almost all $t \in [0,T]$}, \\
 \xi^k(0)=u_{0}^k, \; k\in \mathbb{N}.
 \end{dcases}
 \end{equation}
 Now, by Theorem~\ref{teosol}, if the compatibility conditions (H1) are satisfied, there exists a unique solution 
 $\xi^k \in \mathcal{AC}_g([0,T])\cap \mathcal{BC}_g([0,T])$, $k=1,\ldots,n$, which, furthermore, we can compute explicitly taking into account the exponential function in \cite[Lemma 6.4]{POUSO2017}, this is,
 \begin{equation} \label{eq:galerkin4}
 \xi^k(t)=\widehat{e}^{k}(t)^{-1} u_{0}^k +
 \widehat{e}^{k}(t)^{-1} \int_{[0,t)} \widehat{e}^{k}(s) \widetilde{f}^k(s) \,\operatorname{d}\mu_g(s),
 \end{equation}
 with, for every $k=1,\ldots,n$, 
 \begin{equation}
 \widehat{e}^k(t)=\begin{dcases}
 e^{\int_{[0,t)} \widehat{d}^k(s) \,\operatorname{d}\mu_g(s)}, & 0\leq t \leq t_1, \\
 (-1)^j e^{\int_{[0,t)} \widehat{d}^k(s) \,\operatorname{d}\mu_g(s)}, & t_j < t \leq t_{j+1}, \; 
 k=1,\ldots,N_k, 
 \end{dcases}
 \end{equation}
 with $t_{N_k+1}=T$ and $\widehat{d}^k$ given by
 \begin{equation}
 \widehat{d}^k(t)=\begin{dcases}
 \widetilde{d}^k(t), & \; t \in [0,T] \setminus D_g, \\
 \frac{\ln|1+\widetilde{d}^k(t)\Delta g(t)|}{\Delta g(t)}, &
 t \in [0,T] \cap D_g,
 \end{dcases}
 \end{equation}
 where 
 \begin{equation}\label{dft}
 \widetilde{d}^k(t)=
 \frac{\lambda_k}{1-\lambda_k\Delta g(t)}, \quad 
 \widetilde{f}^k(t)=
 \frac{f^k(t)}{1-\lambda_k\Delta g(t)},
 \end{equation}
 are functions in $L^1_g([0,T])$, as it was pointed out in the proof of \cite[Proposition~6.8]{POUSO2017}. Finally, 
 the points $\{t_1,\ldots,t_{N_k}\}$, with $k=1,\ldots,n$, are those in
 \begin{equation}T_{\widetilde{d}^k}^-=\{ t \in [0,T]\cap D_g: \; 1+ \widetilde{d}^k(t)\Delta g(t)<0\}.\end{equation}
 This is a finite set because
 \begin{equation}
 \sum_{t \in T_{\widetilde{d}^k}^-} 1 < \sum_{t \in T_{\widetilde{d}^k}^-} 
 |\widetilde{d}^k(t)\Delta g(t)| \leq \|\widetilde{d}^k\|_{L^1_g(0,T)} < \infty.
 \end{equation}
 Observe that, given $t \in [0,T]\cap D_g$, we have that
 \begin{equation}
 \begin{aligned}
 \widehat{d}^k(t)=&
 \frac{1}{\Delta g(t)} \ln\left|1+\frac{\lambda_k \Delta g(t) }{1-\lambda_k\Delta g(t)} \right|
 =
 \frac{1}{\Delta g(t)} \ln \left| \frac{1}{1-\lambda_k\Delta g(t)} \right|\\
 =& 
 - \frac{1}{\Delta g(t)} \ln\left| 1-\lambda_k\Delta g(t) \right|. 
 \end{aligned}
 \end{equation}
 Thence,
 \begin{equation}\label{dsim}
 \widehat{d}^k(t)=\begin{dcases}
 \lambda_k, & t \in [0,T] \setminus D_g, \\
 - \frac{\ln\left| 1-\lambda_k\Delta g(t) \right|}{\Delta g(t)} , &
 t \in [0,T] \cap D_g.
 \end{dcases}
 \end{equation}
 Observe that, for a given $t \in [0,T] \cap D_g$, there exists an index $k$ 
 from which $\ln\left| 1-\lambda_k\Delta g(t) \right|$ is a strictly positive number. 

 \textbf{\textbullet\ Part 2: obtaining of bounds related to the solution of the discrete problem.}

 In what follows we will obtain a series of bounds of the solution of
~\eqref{eq:galerkin4} of the approximated problem~\eqref{eq:galerkin3}. Taking the absolute value on~\eqref{eq:galerkin4} we have that
 \begin{equation}
 |\xi^k (t)| \leq \frac{|u_0^k|}{|\widehat{e}^{k}(t)|} + 
 \int_{[0,t)} \frac{|\widehat{e}^{k}(s)|}{|\widehat{e}^{k}(t)|} |\tilde{f}^k(s)|\,\operatorname{d}\mu_g(s),
 \end{equation}
 Thence, taking into account Hölder's inequality and the parallelogram law,
 \begin{equation}\label{paralaw}
 |\xi^k (t)|^2 \leq 2 \frac{|u_0^k|^2}{|\widehat{e}^{k}(t)|^2} + 
 2 \int_{[0,t)} \frac{|\widehat{e}^{k}(s)|^2}{|\widehat{e}^{k}(t)|^2 |1-\lambda_k \Delta g(s)|^2} \,\operatorname{d}\mu_g(s)
 \int_{[0,t)} |f^k(s)|^2\,\operatorname{d}\mu_g(s).
 \end{equation}
 Now,
 \begin{equation}\label{cotae}
 \begin{aligned}
 \frac{1}{|\widehat{e}^{k}(t)|^2} = &
 \exp\left( {-2 \int_{[0,t)\setminus D_g} \widehat{d}^k(u) \,\operatorname{d}\mu_g(u)}\right) 
 \exp\left( {-2 \int_{[0,t) \cap D_g} \widehat{d}^k(u) \,\operatorname{d}\mu_g(u)}\right) 
 \\ 
 = &e^{-2 \lambda_k \,\mu_g([0,t)\setminus D_g)}
 \exp\left( {2 \sum_{u \in [0,t) \cap D_g}\ln|1-\lambda_k \Delta g(u)|}\right) 
 \\ 
 = & 
 e^{-2 \lambda_k \,\mu_g([0,t)\setminus D_g)} 
 \prod_{u \in [0,t) \cap D_g} |1-\lambda_k \Delta g(u)|^2.
 \end{aligned}
 \end{equation}

 Analogously, using~\eqref{cotae},
 \begin{equation}
 \begin{aligned}
 &\int_{[0,t)}
 \frac{|\widehat{e}^{k}(s)|^2}{|\widehat{e}^{k}(t)|^2|1-\lambda_k \Delta g(s)|^2} \,\operatorname{d}\mu_g(s) \\ = & 
 \int_{[0,t)} \frac{e^{-2 \lambda_k \,\mu_g([0,t)\setminus D_g)} 
 \prod_{u \in [0,t) \cap D_g} |1-\lambda_k \Delta g(u)|^2}{
 e^{-2 \lambda_k \,\mu_g([0,s)\setminus D_g)} 
 \prod_{u \in [0,s) \cap D_g} |1-\lambda_k \Delta g(u)|^2|1-\lambda_k \Delta g(s)|^2}\operatorname{d}\mu_g(s) \\
 = &
 \int_{[0,t)} e^{-2 \lambda_k \,\mu_g([s,t)\setminus D_g)} 
 \frac{\prod_{u \in [s,t) \cap D_g} |1-\lambda_k \Delta g(u)|^2}{
 |1-\lambda_k \Delta g(s)|^2} \,\operatorname{d}\mu_g(s).
 \end{aligned}
 \end{equation}
 Thus, from~\eqref{paralaw} and using (H2) and (H4) we have that, for every $k \in \mathbb{N}$ and every $t \in [0,T]$,
 \begin{equation} \label{eq:galerkin6}
 \begin{aligned}
 |\xi^k (t)|^2  \leq 2 \left[e^{-2 \lambda_k \,\mu_g([0,t)\setminus D_g)} 
 \prod_{u \in [0,t) \cap D_g} |1-\lambda_k \Delta g(u)|^2\right] \,|u_0^k|^2 \\ 
 + 2 \left[\int_{[0,t)} e^{-2 \lambda_k \,\mu_g([s,t)\setminus D_g)} 
 \frac{\prod_{u \in [s,t) \cap D_g} |1-\lambda_k \Delta g(u)|^2}{
  |1-\lambda_k \Delta g(s)|^2} \,\operatorname{d}\mu_g(s) \right]\\  \cdot \int_{[0,t)}|f^k(s)|^2 \,\operatorname{d}\mu_g(s) 
 \leq 2 C_1 \, |u_0^k|^2 + 2 C_2 \, \|f^k\|_{L_g^{2}([0,T],{\mathbb R})}^2.
 \end{aligned}
 \end{equation}
 On the other hand, from~\eqref{paralaw} and using (H3) and (H5) we have that
 \begin{equation} \label{eq:galerkin7}
 \begin{aligned}
 & \int_{[0,T)} \lambda_k |\xi^k(t)|^2\, \operatorname{d} \mu_g(t) \\ & \leq 2\left[
 \int_{[0,T)} \lambda_k
 e^{-2 \lambda_k \,\mu_g([0,t)\setminus D_g)} 
 \prod_{u \in [0,t) \cap D_g} |1-\lambda_k \Delta g(u)|^2\, \operatorname{d} \mu_g(t)\right]\,
 |u_0^k|^2 \\ &
 + 2 \int_{[0,T)} \int_{[0,t)} \lambda_k e^{-2 \lambda_k \,\mu_g([s,t)\setminus D_g)} 
 \frac{\prod_{u \in [s,t) \cap D_g} |1-\lambda_k \Delta g(u)|^2}{
 |1-\lambda_k \Delta g(s)|^2} \,\operatorname{d}\mu_g(s)\, \operatorname{d} \mu_g(t) \\ & \cdot
 \|f^k\|_{L_g^{2}([0,T],{\mathbb R})}^2 \leq 2 C_1 \, |u_0^k|^2 + 2 C_2 \, \|f^k\|_{L_g^{2}([0,T],{\mathbb R})}^2.
 \end{aligned}
 \end{equation}
 From~\eqref{eq:galerkin6} we deduce that, given
 $n,p\in \mathbb{N}$,
 \begin{equation} \label{eq:galerkin8}
 \sum_{k=n}^{n+p} |\xi^k(t)|^2 \leq 
 2C_1 \sum_{k=n}^{n+p} |u_0^k|^2 +
 2C_2 \int_{[0,T)} \sum_{k=n}^{n+p} |f^k(s)|^2 \,\operatorname{d}\mu_g(s), \; \forall t \in [0,T]
 \end{equation}
 and, from~\eqref{eq:galerkin7}, 
 \begin{equation} \label{eq:galerkin9}
 \sum_{k=n}^{n+p} 
 \int_{[0,T)} \lambda_k |\xi^k(t)|^2\, \operatorname{d} \mu_g(t) \leq 
 2C_1 \sum_{k=n}^{n+p} |u_0^k|^2 +
 2C_2 \int_{[0,T)}\sum_{k=n}^{n+p} |f^k(s)|^2 \,\operatorname{d}\mu_g(s).
 \end{equation}

 \textbf{\textbullet\ Part 3: taking to the limit the discrete problem.}

 Given $n \in \mathbb{N}$, we write $u_n(t):=\sum_{k=1}^n \xi^k(t) w_k$, since 
 $\xi^k \in \mathcal{AC}_g([0,T])\cap \mathcal{BC}_g([0,T])$, 
 we have that
 $u_n \in \mathcal{BC}_g([0,T],L^2(\Omega))$. 
 Thanks to the bound in
\eqref{eq:galerkin8} we observe that $\{u_n\}_{n \in \mathbb{N}}$ is a Cauchy sequence in $\mathcal{BC}_g([0,T],L^2(\Omega))$. Indeed, on one hand,
 \begin{equation}
 \sup_{t \in [0,T]} \|u_n(t)\|_{L^2(\Omega)} = \sup_{t \in [0,T]} \left[
 \sum_{k=1}^n 
 |\xi^k(t)|^2\right]^{1/2},
 \end{equation}
 so, taking into account~\eqref{eq:galerkin8} and the subadditivity of the square root,
 \begin{equation}
 \begin{aligned}
 &\sup_{t \in [0,T]} \|u_{n+p}(t)-u_n(t)\|_{L^2(\Omega)} = 
 \sup_{t \in [0,T]} \left[
 \sum_{k=n}^{n+p} 
 |\xi^k(t)|^2\right]^{1/2} \\
 \leq &\sqrt{2 C_1}\, \left[\sum_{k=n}^{n+p}|u_0^k|^2 \right]^{1/2} +
 \sqrt{2 C_2}\, \left[
 \int_{[0,T)} \sum_{k=n}^{n+p} |f^k(s)|^2 \,\operatorname{d}\mu_g(s)
 \right]^{1/2},
 \end{aligned}
 \end{equation}
 from where we deduce the Cauchy character of the series at the left hand side of the equality. In particular, 
 since $\mathcal{BC}_g([0,T],L^2(\Omega))$ is a Banach space, 
 the sequence will be convergent to an element $u \in \mathcal{BC}_g([0,T],L^2(\Omega))$. Furthermore,
 $u(0)=u_0$. To see this, observe that, since $g$ is continuous at $0$ and $u_n\in\mathcal{BC}_g([0,T],L^2(\Omega))$, $u_n$ are continuous at $0$, 
 so $u_n(0)=u_n(0^+)$.

 From equation~\eqref{eq:galerkin4} and the continuity of $g$ at $0$ we have that
 \begin{equation}
 \begin{aligned}
& \|u_n(0^+)-u_0\|_{L^2(\Omega)}^2 = \sum_{k=1}^{n} |\xi^k(0^+)-u_0^k|^2+\sum_{k=n+1}^{\infty} |u_0^k|^2 \\= &
 \sum_{k=1}^{\infty} \left|(\widehat{e}^{k}(0^+)^{-1}-1)u_{0}^k +
 \widehat{e}^{k}(0)^{-1} \int_{[0,0^+)} \widehat{e}^{k}(s) \widetilde{f}^k(s) \,\operatorname{d}\mu_g(s)\right|^2 \\ & +\sum_{k=n+1}^{\infty}|u_0^k|^2 =  \sum_{k=n+1}^{\infty}|u_0^k|^2 \to 0. 
 \end{aligned}
 \end{equation}
 Hence, $\{u_n(0)\}_{n\in\mathbb N}\to u_0$ and so $u(0)=u_0$. On the other hand, if we take into account that
 \begin{equation}
 \|u_n\|_{L^2_g([0,T],H^1_0(\Omega))}= \left[\int_{[0,T)}
 \sum_{k=1}^n\lambda_k |\xi^k(t)|^2\, \operatorname{d} \mu_g(t)
 \right]^{1/2},
 \end{equation}
 we have that, thanks to~\eqref{eq:galerkin9},
 \begin{equation}
 \begin{aligned}
 &\|u_{n+p}-u_n\|_{L^2_g([0,T],H^1_0(\Omega))} = 
 \left[
 \int_{[0,T)} \sum_{k=n}^{n+p} \lambda_k |\xi^k(t)|^2 \, 
 \operatorname{d} \mu_g(t)
 \right]^{1/2}
 \\
 \leq & \sqrt{2 C_1}\, \left[\sum_{k=n}^{n+p}|u_0^k|^2 \right]^{1/2} +
 \sqrt{2 C_2}\, \left[
 \int_{[0,T)} \sum_{k=n}^{n+p} |f^k(s)|^2 \,\operatorname{d}\mu_g(s)
 \right]^{1/2}.
 \end{aligned}
 \end{equation}
 Thus, $\{u_n\}_{n\in\mathbb N}$ is a Cauchy sequence in the Banach space $L^2_g([0,T],H^1_0(\Omega))$ and so
 $\{u_n\}_{n \in \mathbb{N}}\to u$ in $L^2_g([0,T],H_0^1(\Omega))$. Finally, given
 $n \in \mathbb{N}$ and $1\leq k\leq n$,~\eqref{eq:galerkin1} can 
 be written for every $t \in [0,T]$ and $k =1,\ldots,n$,
 \begin{equation} \label{eq:galerkin10}
 \begin{aligned}
 & (u_n(t),w_k)=  (u_{0,n},w_k)+\int_{[0,t)} (f_n(s),w_k)\,\operatorname{d}\mu_g(s)\\ &
 -k_1 \int_{[0,t)} \int_{\Omega} \nabla u_n(s) \cdot \nabla w_k \, \operatorname{d} x \, \operatorname{d}\mu_g(s)
 -
 \delta
 \int_{[0,t)} \int_{\Omega} u_n(s) w_k \, \operatorname{d} x \, \operatorname{d}\mu_g(s).
 \end{aligned}
 \end{equation}

 Let us fix an element $k<n$ and take $n\to\infty$. We have that, for every $t \in [0,T]$,
 \begin{equation}
 \begin{array}{l}
 \displaystyle
 \lim_{n \to \infty} \left|(u_n(t)-u(t),w_k)\right|\leq  
 \lim_{n \to \infty} \|u_n(t)-u(t)\|_{L^2(\Omega)}=0, \\ 
 \displaystyle
 \lim_{n \to \infty} \left|\int_{[0,t)} (u_n(s)-u(s),w_k) \,\operatorname{d}\mu_g(s) \right| 
 \\ \displaystyle \quad
 
 \leq  
 \lim_{n \to \infty} \int_{[0,t)} \|u_n(s)-u(s)\|_{H^1_0(\Omega)}
 \|w_i\|_{H_0^1(\Omega)}
 \,\operatorname{d}\mu_g(s) = 0,
 \\ 
 \displaystyle
 \lim_{n \to \infty} \left|\int_{[0,t)} (f_n(s)-f(s),w_k)\,\operatorname{d}\mu_g(s) \right| \\
 \displaystyle \quad
  \leq 
 \lim_{n \to \infty} \int_{[0,t)} \|f_n(s)-f(s)\|_{L^2(\Omega)}\,\operatorname{d}\mu_g(s)=0,
 \end{array}
 \end{equation}
 Furthermore, $\lim_{n \to \infty} (u_n(0),w_k) =
 (u_0,w_k)$. Thus, since we can choose $k$ arbitrarily, we deduce, by the density of the system of vectors $\{w_k\}_{k \in \mathbb{N}}$ in $H_0^1(\Omega)$, that
 \begin{equation} \label{eq:galerkin11}
 \begin{aligned}
 (u(t),w)= & (u_{0},w)+\int_{[0,t)} (f(s),w)\,\operatorname{d}\mu_g(s)
 -k_1 \int_{[0,t)} \int_{\Omega} \nabla u(s) \cdot \nabla w \, \operatorname{d} x \, \operatorname{d}\mu_g(s)
 \\ &
 -
 \delta
 \int_{[0,t)} \int_{\Omega} u(s) w \, \operatorname{d} x \, \operatorname{d}\mu_g(s),\; \forall t \in [0,T], \; 
 \; \forall w \in H_0^1(\Omega).
 \end{aligned}
 \end{equation}

 \textbf{\textbullet\ Part 4: bounding with respect to the data.}
 
  On one hand, 
 we have by~\eqref{eq:galerkin6} and~\eqref{eq:galerkin7} that, for every $t\in[0,T]$,
 \begin{equation}
 \begin{aligned}
 & \|u_n(t)\|_{L^2(\Omega)}^2+
 \int_{[0,T)}\|u_n(s)\|^2_{H_0^1(\Omega)} \, 
 \operatorname{d} \mu_g(s) 
 \\\leq &  4 C_1 \|u_0\|^2_{L^2(\Omega)}
 +4 C_2 \|f\|^2_{L^2([0,T],L^2(\Omega))},
 \end{aligned}
 \end{equation}
 so, taking $n\to \infty$, 
 \begin{equation} \label{eq:galerkin15}
 \|u\|_{L^{\infty}_g([0,T],L^2(\Omega))} +
 \|u \|_{L^2_g([0,T],H^1_0(\Omega))}
 \leq 
 \widehat{C}_1 \|u_0\|_{L^2(\Omega)}+
 \widehat{C}_2 \|f\|_{L_g^{2}([0,T],L^2(\Omega))}.
 \end{equation}
 Recovering the $g$-time derivative from~\eqref{eq:galerkin11},
 \begin{equation}
 u_g'=f-k_1\, \Delta u -k_2 \, u \in L_g^{2}([0,T],H^{-1}(\Omega)),
 \end{equation}
 and using the bounds in~\eqref{eq:galerkin15}, we obtain, redefining the constants if necessary, that
 \begin{equation}\label{eq:galerkin16}
 \begin{aligned}
 & \|u\|_{L^{\infty}_g([0,T],L^2(\Omega))} +
 \|u \|_{L^2_g([0,T],H^1_0(\Omega))}+
 \|u_g'\|_{L_g^2([0,T],H^{-1}(\Omega))}
 \\ 
 \leq & \widehat{C}_1 \|u_0\|_{L^2(\Omega)}+
 \widehat{C}_2 \|f\|_{L_g^{2}([0,T],L^2(\Omega))}.
 \end{aligned}
 \end{equation}

 \textbf{\textbullet\ Part 5: uniqueness of solution.} Suppose that there exists 
 another solution of system~\eqref{eq:estado1} 
 $\widehat{u} \in \widetilde{W}^{1,2,2}_g([0,T],H_0^1(\Omega),H^{-1}(\Omega))\cap 
 \mathcal{BC}_g([0,T],L^2(\Omega))$ in the sense of Definition 
~\ref{definition1}. Then,
 \begin{equation} \label{eq:series1}
 \widehat{u}(t)=\sum_{k=1}^{\infty} \widehat{\xi}^k(t)\, w_k,\; \forall t \in [0,T],
 \end{equation}
 where
 \begin{equation}
 \widehat{\xi}^k(t)=\int_{\Omega} \widehat{u}(t)\, w_k \, \operatorname{d} x
 \end{equation}
 and the convergence of series occurring in~\eqref{eq:series1} is considered in 
 $L^2(\Omega)$ for all $t \in [0,T]$ and in $H_0^1(\Omega)$ for $g$-almost 
 all $t \in [0,T]$. 
 To see this, observe that since $\widehat{u}(t)\in L^2(\Omega)$, for all $t \in [0,T]$, and $\{w_k\}_{
 k \in \mathbb{N}}$ is an orthonormal basis of $L^2(\Omega)$, we have that 
 \begin{equation}
 \widehat{u}(t)=\sum_{k=1}^{\infty} \left(\int_{\Omega} \widehat{u}(t) \, w_k \, 
 \operatorname{d} x \right) w_k,
 \end{equation}
 where the convergence is in $L^2(\Omega)$. Now, $\{w_k/\sqrt{\lambda_k}\}_{k=1}^{\infty}$ is a orthonormal basis of 
 $H_0^1(\Omega)$ associated to scalar product in~\eqref{eq:escalar}, 
 so, since $\widehat{u}(t) \in H_0^1(\Omega)$ for $g$-almost all $t \in [0,T]$, we have that
 \begin{equation}
 \begin{aligned}
 \widehat{u}(t)
 =&
 \sum_{k=1}^{\infty} \left(\widehat{u}(t),\frac{w_k}{\sqrt{\lambda_k}}\right) 
 \frac{w_k}{\sqrt{\lambda_k}}=
 \sum_{k=1}^{\infty} \frac{1}{\lambda_k} (\widehat{u}(t),w_k) w_k \\
 =&
 \sum_{k=1}^{\infty} \frac{\lambda_k}{\lambda_k}\left(
 \int_{\Omega} \widehat{u}(t)\, w_k\, \operatorname{d} x \right)\, w_k,
 \end{aligned}
 \end{equation} 
 where the convergence is in $H_0^1(\Omega)$. Now, given elements $t<s$ in $[0,T]$ (analogous for the case $s<t$) 
 and $k \in \mathbb{N}$, we have that
 \begin{equation}
 \begin{aligned}
 \frac{\widehat{\xi}^k(s)-\widehat{\xi}^k(t)}{g(s)-g(t)}=&
 \int_{\Omega} \frac{\widehat{u}(s)-\widehat{u}(t)}{
 g(s)-g(t)} \, w_k \, \operatorname{d} x 
 =\left< \frac{\widehat{u}(s)-\widehat{u}(t)}{
 g(s)-g(t)} , w_k
 \right>_{H^{-1}(\Omega),H_0^1(\Omega)},
 \end{aligned}
 \end{equation}
 thus,
 \begin{equation}
 \begin{aligned}
 & \lim_{s \to t^+} \left|
 \frac{\widehat{\xi}^k(s)-\widehat{\xi}^k(t)}{g(s)-g(t)}-
 \left<\widehat{u}_g'(t),w_k \right>_{H^{-1}(\Omega),H_0^1(\Omega)}
 \right| \\  
 \leq& \lim_{s \to t^+} \left\|
 \frac{\widehat{u}(s)-\widehat{u}(t)}{
 g(s)-g(t)} -\widehat{u}_g'(t)
 \right\|_{H^{-1}(\Omega)} \|w_k\|_{H_0^1(\Omega)}=0.
 \end{aligned}
 \end{equation}
 Where the convergence is a consequence of
$\widehat{u} \in \widetilde{W}_g^{1,2,2}([0,T],H_0^1(\Omega),H^{-1}(\Omega))$. From 
 the previous expression we deduce that 
 \begin{equation} \label{eq:gder1}
 (\widehat{\xi}^k)_g'(t) = 
 \left<\widehat{u}_g'(t),w_k \right>_{H^{-1}(\Omega),H_0^1(\Omega)}\; 
 \text{for $g$-a.e. $t\in [0,T]$}.
 \end{equation}
 Therefore, $\widehat{\xi}^k \in \widetilde{W}_g^{1,2}(0,T) \cap \mathcal{BC}_g([0,T])$. Now, 
 assume $\widehat{u}$ is a solution of system~\eqref{eq:estado1}. 
 Therefore, for $g$-a.e. $t \in [0,T]$ and  every  $k\in \mathbb{N}$,
 \begin{equation}
 \begin{aligned}
 (\widehat{\xi}^k)_g'(t) 
 = &
 \left<\widehat{u}_g'(t),w_k \right>_{H^{-1}(\Omega),H_0^1(\Omega)} 
\\ = &
 \left< f(t) + k_1 \nabla\cdot\nabla \widehat{u}(t) - 
 k_2 \widehat{u}(t),w_k\right>_{H^{-1}(\Omega),H_0^1(\Omega)} 
 \\ 
 = & 
 f^k(t)-(\widehat{u}(t),w_k)
 =f^k(t)-\lambda_k \widehat{\xi}^k(t).
 \end{aligned}
 \end{equation}
 From the previous expression we have that $\widehat{\xi}^k$ satisfies the following equation:
 \begin{equation} 
 \begin{dcases}
 (\widehat{\xi}^k_g)'(t) + \lambda_i \,\widehat{\xi}^k(t)=f^k(t), & 
 \text{for $g$-almost all $t \in [0,T]$}, \\
 \widehat{\xi}^k(0)=u_{0}^k, & k\in \mathbb{N},
 \end{dcases}
 \end{equation}
 which is the same equation that satisfies $\xi^k$ for $k \in \mathbb{N}$, in \eqref{eq:galerkin3}. Hence, by the uniqueness of solution of previous system, 
 we have that $\xi^k(t)=\widehat{\xi}^k(t)$, $t\in [0,T]$, $\forall k \in \mathbb{N}$, 
 and then, $u$ and $\widehat{u}$ are essentially the same element. 
\end{proof}

\begin{rem} We must point out that the solution we have obtained in
~\eqref{eq:galerkin2} is not, in general, continuous at the 
points of $[0,T] \cap D_g$. Indeed, given $t \in [0,T] \cap D_g$, 
 we have that
 \begin{equation}
 \xi^k(t^+) = \xi^k(t) (1-\lambda_k \Delta g(t)) + 
 f^k(t) \Delta g(t).
 \end{equation}
 \end{rem}

 Let us consider now sufficient conditions in order to guarantee the fulfillment of the 
 existence hypotheses (H1)--(H5).
 As we can see from the proof, such conditions are necessary in order to establish some bounds of the partial sums in some spaces. These appear naturally while establishing the bounds that concern the initial condition and source term.

\begin{cor}[Sufficient conditions] Let $u_0\in L^2(\Omega)$, $f \in L^2_g([0,T],L^2(\Omega))$ and
 $g$ a nondecreasing function, continuous in a neighborhood of $t=0$ and left-continuous in $(0,T]$. A sufficient condition for (H2)--(H5)
 to hold is
 \begin{equation}
 \sum_{u \in [s,t)\cap D_g} \frac{\ln|1-\lambda_k \Delta g(u)|}{\lambda_k} 
 < \mu_g([s,t)\setminus D_g), \; \forall 0\leq s< t \leq T,\quad \forall k \in \mathbb{N}.
 \end{equation}
\end{cor}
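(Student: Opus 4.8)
The plan is to transform the displayed inequality into an equivalent multiplicative bound and then read off (H2)--(H5) from it. Multiplying through by $2\lambda_k>0$ and exponentiating, the condition is equivalent to
\begin{equation}
\Phi_k(s,t):=e^{-2\lambda_k\,\mu_g([s,t)\setminus D_g)}\prod_{u\in[s,t)\cap D_g}|1-\lambda_k\Delta g(u)|^2<1,\qquad 0\le s<t\le T,\ k\in\mathbb N,
\end{equation}
and $\Phi_k$ is precisely the kernel occurring in (H2)--(H5); in fact $\Phi_k(0,\cdot)=|\widehat{e}^{\,k}|^{-2}$ in the notation of~\eqref{cotae}. Two structural facts will be used throughout: splitting $[s,r)\setminus D_g$ at an intermediate point gives $\Phi_k(s,r)=\Phi_k(s,t)\Phi_k(t,r)$ for $s<t<r$, so $t\mapsto\Phi_k(0,t)$ is nonincreasing with $\Phi_k(0,0)=1$; and $\Phi_k(0,\cdot)\in\mathcal{AC}_g([0,T])$, with $(\Phi_k(0,\cdot))_g'=-2\lambda_k\Phi_k(0,\cdot)$ on $[0,T)\setminus D_g$ and $(\Phi_k(0,\cdot))_g'=-\lambda_k\bigl(2-\lambda_k\Delta g(\cdot)\bigr)\Phi_k(0,\cdot)$ on $[0,T)\cap D_g$.

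With this in hand, (H2) is immediate: $\Phi_n(0,t)<1$, so $C_1=1$ works. For (H4) I would split the integral over $([0,t)\setminus D_g)\cup([0,t)\cap D_g)$. On the continuous part $\Delta g(s)=0$, so the integrand is $\Phi_n(s,t)<1$; at an atom $s\in[0,t)\cap D_g$ the denominator $|1-\lambda_n\Delta g(s)|^2$ cancels against one factor of the numerator (since $s\in[s,t)$), leaving $\lim_{s'\to s^+}\Phi_n(s',t)\le 1$. Integrating against $\mu_g$ bounds (H4) by $2\mu_g([0,T))=:C_2$.

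For the $\lambda_n$-weighted estimates (H3) and (H5) I would use the $g$-Fundamental Theorem of Calculus (\cite[Theorem~5.1]{POUSO2017}). Since $\Phi_n(0,\cdot)\in\mathcal{AC}_g([0,T])$,
\begin{equation}
\int_{[0,T)}\bigl(-(\Phi_n(0,\cdot))_g'\bigr)\operatorname{d}\mu_g=1-\Phi_n(0,T)\le 1,
\end{equation}
and the integrand equals $2\lambda_n\Phi_n(0,t)$ off $D_g$ and $\lambda_n(2-\lambda_n\Delta g(t))\Phi_n(0,t)$ on $D_g$. As $\Phi_n(0,\cdot)$ is nonincreasing, both expressions are nonnegative --- in particular $\lambda_n\Delta g(t)\le 2$ at every atom --- so dropping the atomic sum gives $\int_{[0,T)\setminus D_g}\lambda_n\Phi_n(0,t)\operatorname{d}\mu_g(t)\le\tfrac12$, which is the continuous part of (H3); the same computation applied to $s\mapsto\Phi_n(s,t)$, followed by integration in $t$, handles the continuous part of the double integral in (H5).

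The step I expect to be the main obstacle is the \emph{atomic} part of (H3) and (H5), i.e.\ a uniform bound for $\sum_{t\in[0,T)\cap D_g}\lambda_n\Delta g(t)\,\Phi_n(0,t)$: the identity above only yields $\sum_t\lambda_n\Delta g(t)(2-\lambda_n\Delta g(t))\Phi_n(0,t)\le 1$, and converting this into a bound for $\sum_t\lambda_n\Delta g(t)\Phi_n(0,t)$ requires $2-\lambda_n\Delta g(t)$ to stay away from $0$ uniformly in $n$ and $t$. This is precisely where the hypothesis intervenes: applying it on intervals $[t_0,t')$ with $t_0\in[0,T)\cap D_g$ and letting $t'\downarrow t_0$ (so $\mu_g([t_0,t')\setminus D_g)\to 0$, and the sum over $(t_0,t')\cap D_g\to 0$) forces $\ln|1-\lambda_k\Delta g(t_0)|\le 0$, that is $\lambda_k\Delta g(t_0)\le 2$, for \emph{every} $k\in\mathbb N$; since $\lambda_k\to\infty$ this can only occur if $\Delta g(t_0)=0$. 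Hence the displayed condition in fact entails $D_g\cap[0,T)=\varnothing$, the atomic sums vanish identically, and (H2)--(H5) collapse to the classical bounds $e^{-2\lambda_n\sigma}\le1$, $\int_0^{L}\lambda_n e^{-2\lambda_n\sigma}\operatorname{d}\sigma\le\tfrac12$ and $\int_0^{L}e^{-2\lambda_n\sigma}\operatorname{d}\sigma\le\tfrac1{2\lambda_1}$ with $L=\mu_g([0,T))$, obtained via the change of variables $\sigma=\mu_g([0,\cdot))$ --- which is exactly the content of (H2)--(H5) in the jump-free regime.
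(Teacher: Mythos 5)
Your proposal is correct, but it proves the corollary by a genuinely different route than the paper, and in doing so it exposes two points that the paper's own argument glosses over. The paper's proof consists essentially of rewriting the kernel as $\exp\bigl(-2\lambda_k\bigl[\mu_g([s,t)\setminus D_g)-\sum_{u\in[s,t)\cap D_g}\lambda_k^{-1}\ln|1-\lambda_k\Delta g(u)|\bigr]\bigr)$ and observing that the hypothesis makes the bracket positive, so the kernel is $<1$; this does give (H2) and (H4), but — exactly as you note — a pointwise bound of $1$ cannot by itself give the $\lambda_n$-weighted estimates (H3) and (H5), since it only yields $\lambda_n\mu_g([0,T))$, unbounded in $n$. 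Your use of the $g$-Fundamental Theorem of Calculus on $\Phi_n(0,\cdot)$, i.e.\ $\int_{[0,T)}\bigl(-(\Phi_n(0,\cdot))_g'\bigr)\operatorname{d}\mu_g\le 1$ with $-(\Phi_n(0,\cdot))_g'=2\lambda_n\Phi_n(0,\cdot)$ off $D_g$, supplies precisely the missing ingredient (it is the classical bound $\int_0^L\lambda_n e^{-2\lambda_n\sigma}\operatorname{d}\sigma\le\tfrac12$ transported by $\sigma=\mu_g([0,\cdot))$), so your route is the more complete one. Your second observation is also correct and is the more striking: taking $s=t_0\in D_g\cap(0,T)$ and letting $t\downarrow t_0$ in the displayed condition sends the right-hand side to $0$ and the sum over $(t_0,t)\cap D_g$ to $0$ (legitimate, since the hypothesis is only meaningful if these sums converge unconditionally, and (H1) guarantees it in any application of the theorem), leaving $\ln|1-\lambda_k\Delta g(t_0)|\le 0$ for every $k$; since $\lambda_k\to\infty$ this forces $\Delta g(t_0)=0$. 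Hence the stated sufficient condition entails $D_g\cap[0,T)=\varnothing$, the atomic obstruction in (H3)/(H5) vanishes, and the corollary holds — but degenerately, covering only the jump-free regime. The paper's proof, which carries the jump products through the estimates as if atoms could be present, does not detect this; your argument buys rigor on (H3)/(H5) at the price of revealing that the condition, as written with the sum taken over $[s,t)$ for \emph{all} $0\le s<t\le T$, excludes the impulsive examples the paper is built around (a usable version would have to restrict $s\notin D_g$, or weight the jump term differently).
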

\begin{proof} On one hand,
 \begin{equation}
 \begin{aligned}
 & e^{-2 \lambda_k \,\mu_g([0,t)\setminus D_g)} 
 \prod_{u \in [0,t) \cap D_g} |1-\lambda_k \Delta g(u)|^2 \\ 
 = & \begin{dcases} e^{-2\lambda_k \mu_g([0,t))}, & t\in [0,t_0),\\
 \exp\left( -2\lambda_k \left[
 \mu_g([0,t)\setminus D_g) - \sum_{u \in [0,t)\cap D_g} 
 \frac{\ln|1-\lambda_k \Delta g(u)|}{\lambda_k}\right]\right) , & t \in (t_0,T],
 \end{dcases}
 \end{aligned}
 \end{equation}
 for any $t_0 \in [0,T]$ such that $g$ is continuous in $[0,t_0)$. 
 Therefore we obtain the bounds in (H2) and (H3). 
 Let us check now what happens with conditions (H4) and (H5). Given $t \in [0,T]$ and
 $s \in [0,t)$, we have that
 \begin{equation}
 \begin{aligned}
 & e^{-2 \lambda_k \,\mu_g([s,t)\setminus D_g)} 
 \frac{\prod_{u \in [s,t) \cap D_g} |1-\lambda_k \Delta g(u)|^2}{
 |1-\lambda_k \Delta g(s)|^2} 
 \\
 = &
 e^{-2 \lambda_k \,\mu_g([s,t)\setminus D_g)} 
 \prod_{u \in (s,t) \cap D_g} |1-\lambda_k \Delta g(u)|^2
 \\
 \leq &\exp\left( {-2 \lambda_k \left[
 \mu_g([s,t)\setminus D_g)-
 \sum_{u \in [s,t) \cap D_g} \frac{\ln|1-\lambda_k \Delta g(u)|}{\lambda_k}
 \right]}\right) .
 \end{aligned}
 \end{equation}
 In particular, we have that
 \begin{equation}
 \begin{aligned}
 & e^{-2 \lambda_k \,\mu_g([s,t)\setminus D_g)} 
 \frac{\prod_{u \in [s,t) \cap D_g} |1-\lambda_k \Delta g(u)|^2}{
 |1-\lambda_k \Delta g(s)|^2} \\
 \leq &
 \begin{dcases}
 e^{-2 \lambda_k \mu_g([s,t))}, & t \in [0,t_0), \\
 \begin{aligned} & \exp\left( -2 \lambda_k \left[
 \mu_g([s,t_0))+\mu_g([t_0,t)\setminus D_g)\right]\right)\\ &
 \cdot\exp\left(2 \lambda_k\sum_{u \in [t_0,t) \cap D_g} \frac{\ln|1-\lambda_k \Delta g(u)|}{\lambda_k} \right)  ,\end{aligned} & \substack{\displaystyle t\in [t_0,T),\\ \displaystyle s \in [0,t_0),}
 \\
 \exp\left( {-2 \lambda_k \left[
 \mu_g([s,t)\setminus D_g)-
 \sum_{u \in [s,t) \cap D_g} \frac{\ln|1-\lambda_k 
 \Delta g(u)|}{\lambda_k}\right]}\right) , & t, s \in [t_0,T),
 \end{dcases}
 \end{aligned}
 \end{equation}
 from where obtain estimations (H4) and (H5). 
\end{proof}

\begin{rem} We can easily extend the results above to the case of Neumann 
 homogeneous boundary conditions:
 \begin{equation} 
 \begin{dcases} 
 u_g' -\nabla \cdot (k_1 \nabla u) + k_2\, u =f(t,x),& \text{in}\; 
 [(0,T)\setminus C_g] \times \Omega, \\ 
 k_1 \nabla u \cdot \mathbf{n}=0,& \text{on}\; (0,T) \times \partial \Omega, 
 \\ 
 u(0,x)=u_0(x),& \text{in}\; \Omega.
 \end{dcases}
 \end{equation}
 In this case, we obtain a solution in the space $\widetilde{W}_g^{1,2,2}([0,T],H^1(\Omega),
 H^1(\Omega)')\cap \mathcal{BC}_g([0,T],L^2(\Omega))$.
\end{rem}

\begin{rem}Observe that in the case of $g(t)=t$, hypothesis 
(H1)-(H5) are trivially satisfied and we recover the classical results 
for the parabolic partial differential equation (\ref{eq:clasical}). So, 
in a certain sense, the theory that we have developed generalizes 
the classical theory for this type of partial differential equations.
\end{rem}

\section{Applications to population dynamics}

In this section we present a possible application of the theory that we have 
developed in the previous section to a silkworm population model based 
on the example presented in \cite[Section 5]{POUSO2018}. In our case, 
we will consider that we have a diffusion term that allows us to study the spatial distribution of the silkworm in an island (for instance, Gran Canaria). Thus, 
 consider the following equation:
\begin{equation} \label{eq:generalizacion}
\begin{dcases} 
 u_g'(t)-\nabla \cdot (\eta \nabla u)=f(t,u(t),u),& \text{in}\; 
[(0,T)\setminus C_g] \times \Omega, \vspace{0.1cm} \\ 
 \eta \nabla u \cdot \mathbf{n}=0,
& \text{on}\; (0,T) \times \partial \Omega, \vspace{0.1cm} \\
u(0,x)=u_0(x),& \text{in}\; \Omega,
\end{dcases} 
\end{equation}
where $\Omega \subset \mathbb{R}^2$, $\eta>0$, $u_0 \in L^2(\Omega)$, 
$f:[0,T] \setminus C_g \times \mathbb{R} \times L^1_{\operatorname{loc}}(\mathbb{R}) \rightarrow 
\mathbb{R}$ is such that
\begin{equation}
f(t, x, \varphi)=\begin{dcases}-c x, & {\text { if } t \in(5 k, 5 k+4), k=0,1,2, \ldots} \\ 
{-x,} & {\text { if } t=5 k+4, k=0,1,2, \ldots} \\ {\lambda \int_{t-5}^{t-1} \varphi(s) \operatorname{d} s,} & {\text { if } t=5(k+1), k=0,1,2, \ldots}\end{dcases}
\end{equation}
with $c>0$ and $\lambda>0$ and $g: t \in [0,\infty) \rightarrow \mathbb{R}$ defined as
\begin{equation}
g(t)=\begin{dcases}{\frac{1}{2} \sqrt{4 t-t^{2}},} & {\text { if } 0 \leqslant t \leqslant 2}, \\ {1,} & {\text { if } 2<t \leqslant 3}, \\ {2-\sqrt{6 t-t^{2}-8},} & {\text { if } 3<t \leqslant 4}, \\ {3,} & {\text { if } 4<t \leqslant 5}, \\
4+g(t-5), & {\text { if } 5 > t}.
\end{dcases}
\end{equation}

A detailed description of the relationship between the previous functions and the life 
cycle of silkworms can be found in \cite{POUSO2018}. If we integrate~\eqref{eq:generalizacion} in 
the whole domain $\Omega$
 and denote by $ \overline{u}(t)=
\int_{\Omega} u(t) \, \operatorname{d} x$, we have
\begin{equation}
\int_{\Omega} u_g'(t)\, \operatorname{d} x -
\int_{\Omega} \nabla \cdot (\eta \nabla u)\, \operatorname{d} x=
\int_{\Omega} f(t,u(t),u)\, \operatorname{d} x,
\end{equation}
where we have assumed that we can interchange the integral with 
the Stieltjes derivative, we recover the $0$-space-dimensional model 
studied in \cite{POUSO2018}:
\begin{equation} \label{eq:modelmean}
\begin{dcases}
\displaystyle
\overline{u}_g'(t)=f(t,\overline{u}(t),\overline{u}), & \text{in}\; 
(0,T)\setminus C_g, \\ 
u(0)=\overline{u}_0.
\end{dcases}
\end{equation}

The mathematical analysis of equation~\eqref{eq:generalizacion} can be done 
utilizing the same techniques that we have used for the general model~\eqref{eq:estado1}. 
That is, we can consider a spectral basis of $H^1(\Omega)$, solve the corresponding 
problem associated to each eigenvalue and finally pass to the limit. We leave the details to the reader and we will focus on the numerical approximation of the model.

We consider a polygonal approximation of Gran Canaria island (the domain $\Omega$), and the following triangulation $\{\tau^k_h\}_{k=1}^{nt}$ of the 
domain:
\begin{figure}[H]
 \begin{subfigure}{.45\textwidth}
 \centering
 \includegraphics[width=.9\linewidth]{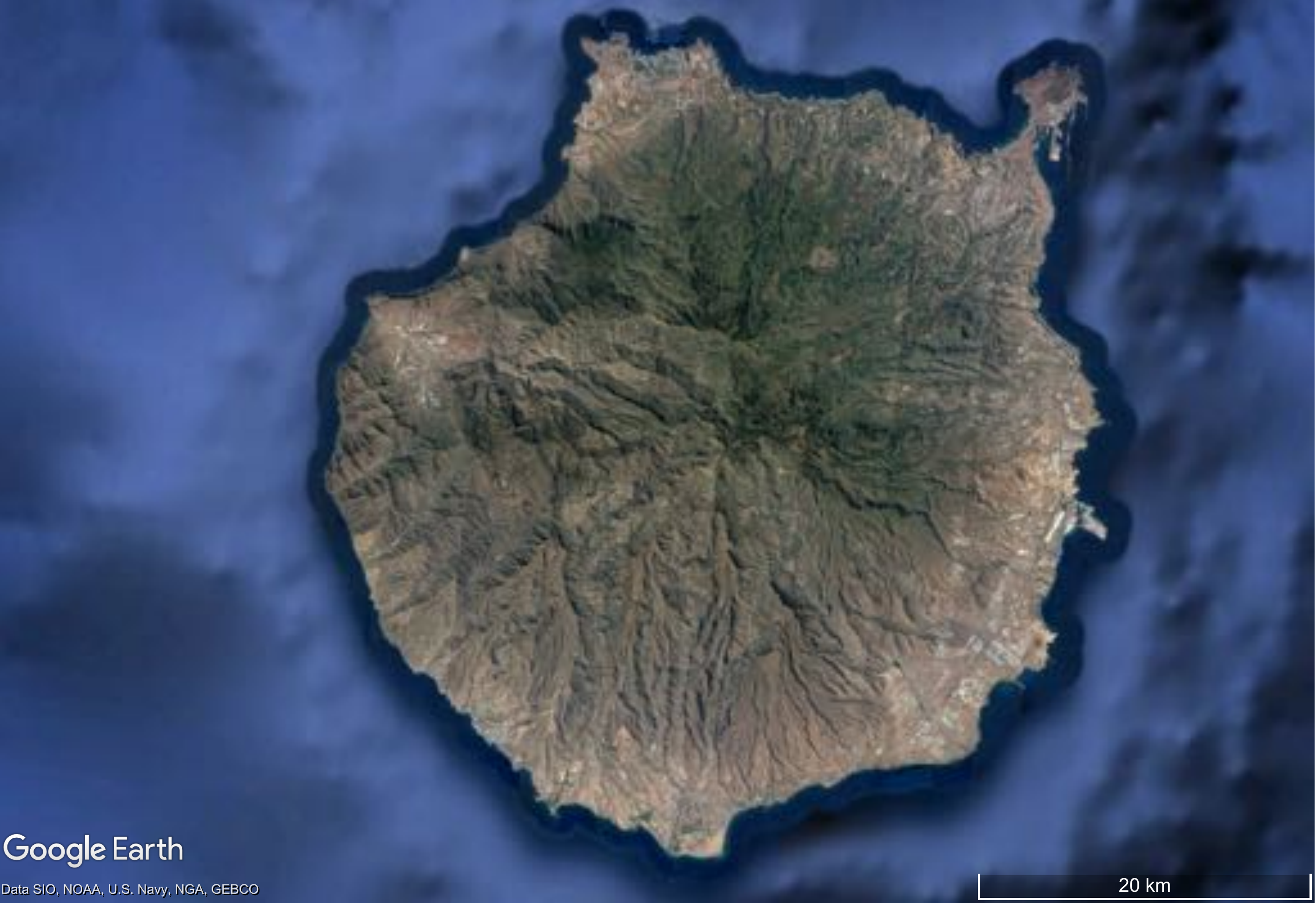}
 \caption{Image of Gran Canaria island generated by Google Earth.}
 \label{fig:sfig1}
 \end{subfigure}\hfill
 \begin{subfigure}{.45\textwidth}
 \centering
 \includegraphics[width=.6\linewidth]{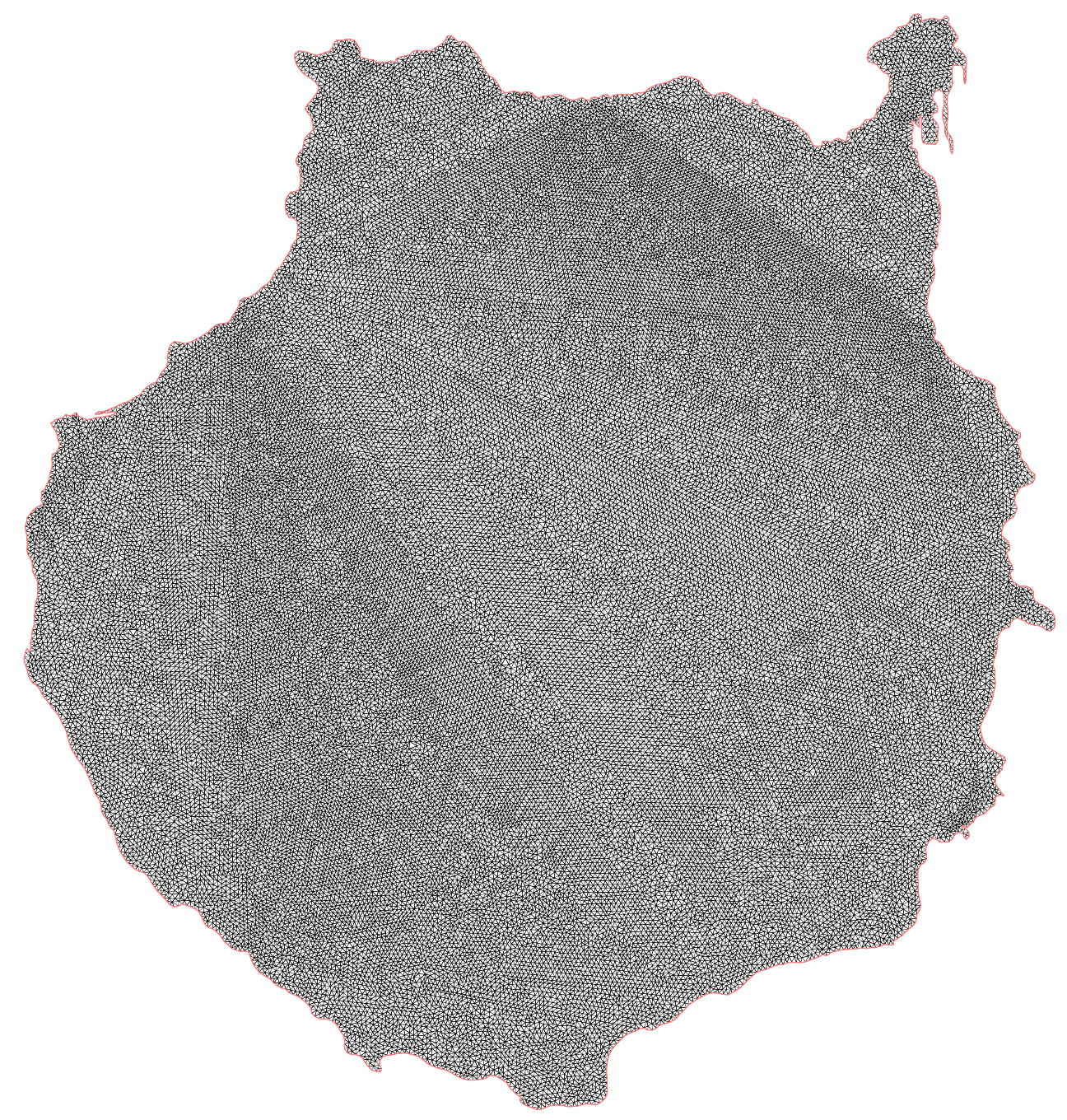}
 \caption{Triangulation generated with FreeFem++\cite{HECHT2012} 
 from the image of the left (88018 triangles).}
 \label{fig:sfig2}
 \end{subfigure}
 \caption{Computational domain.}
 \label{fig:figure1}
\end{figure}
Associated to the previous triangulation, we consider 
the following finite element space:
\begin{equation}
V_h=\{u\in \mathcal{C}(\overline{\Omega}):\; u|_{\tau_h^k} \in 
\mathbb{P}_1(\tau_h^k),\; \forall k=1,\ldots,nt\} \subset H^1(\Omega). 
\end{equation}
Now, let $nv=\dim(V_h)$ (number of vertices) and $\{\varphi_h^j\}_{j=1}^{nv}$ 
a basis of $V_h$ such that
\begin{equation}
\varphi_h^j(e_k)=\delta_{k,j},\; \forall k,\,j\in \{1,\ldots,nv\},
\end{equation}
where $\{e_k\}_{k=1}^{nv}$ are the vertices of the mesh. For $k,j=1,\ldots,nv$ we write
\begin{equation}
\begin{aligned}
[R_h] \in \mathcal{M}_{nv \times nv}({\mathbb R})\ :\ & 
 [R_h]_{k,j}=
\eta \int_{\Omega} \nabla \varphi_k \cdot \nabla \varphi_j \, \operatorname{d} x+\int_{\Omega} \varphi_k \varphi_j \, \operatorname{d} x ,
\vspace{0.1cm}
\\ 
[M_h] \in \mathcal{M}_{nv \times nv}({\mathbb R})\ :\ & 
[M_h]_{k,j}=
\int_{\Omega} \varphi_k \varphi_j \, \operatorname{d} x,.
\end{aligned}
\end{equation}
We will approximate the solution of system~\eqref{eq:generalizacion} by
\begin{equation}
u_h(x,t)=\sum_{k=1}^{nv} \xi^k_h(t) \psi_h^k(x),
\end{equation}
where, for $k=1,\ldots,nv$, $\xi_h^k \in \mathcal{AC}_g([0,T])$ is the solution of
\begin{equation} \label{eq:pbaprox}
\begin{dcases}
 (\xi_h^k)'_g(t) + \lambda_h^k \xi_h^k(t)=f(t,\xi_h^k(t),\xi_h^k) 
,& g\text{-a.e.}\; (0,T)\setminus C_g,\\
\xi_h^k(0)=(u_0,\psi_h^k)_{L^2(\Omega)},
\end{dcases}
\end{equation} 
where 
\begin{equation}
\psi_h^k(x)=\sum_{j=1}^{nv} v^k_j \varphi_j(x)
\end{equation}
with $0<\lambda_h^1\leq \lambda_h^2\leq \cdots\leq \lambda_h^{nv}$ and 
$\{\mathbf{v}^k\}_{k=1}^{nv} \subset 
\mathbb{R}^{nv}$ such that
\begin{equation}
[R_h] \mathbf{v}_h^k = \lambda^k_h [M_h] \mathbf{v}_h^k,\; 
\forall k=1,\ldots,nv.
\end{equation}
We have (see \cite[Theorem 6.4-1]{RAVIART1983}) that 
$ \{\psi_h^k\}_{k=1}^{nv}$ is a basis of $V_h$ orthonormal in 
$L^2(\Omega)$ and $(\psi_h^k,\varphi^j)=\lambda_h^k \left<\psi_h^k,\varphi^j\right>$, $\forall k,\, j =
1,\ldots,nv$, where, given $u,v \in H^1(\Omega)$, 
\begin{equation}
(u,v)=\eta \int_{\Omega} \nabla u \cdot \nabla v \, \operatorname{d} x+\int_{\Omega} u\,v \, \operatorname{d} x.
\end{equation}
Finally, using the same arguments as in \cite{POUSO2018}, we have 
the following expression for the exact solution of~\eqref{eq:pbaprox}:
\begin{equation}
\xi_h^k(t)=\left\{
\begin{array}{ll}
 (u_0,\psi_h^k)\, \mathrm{e}^{-(\lambda_h^k+c-1) g(t)},& 
\text{if } 0\leq t \leq 4, \vspace{0.1cm}\\
 \begin{aligned} & \lambda \int_{5(k-1)}^{5k-1} \xi_h^k(s)\, \operatorname{d} s 
\\ & \cdot \exp\left( -(\lambda_h^k+c-1)[g(t)-g(5k^+)]\right),\end{aligned}  & \text{if } 5k<t\leq 5k+4,\; 
k \in \mathbb{N}, 
\vspace{0.1cm} \\
0, &\text{in other case}.
\end{array}\right.
\end{equation}
In order to implement all the previous approximations we have used 
the software FreeFem++\cite{HECHT2012}. We present some 
results that we have obtaining using the first 150 eigenfunctions and the same 
data as in \cite{POUSO2018}, with
\begin{equation}
u_0(x,y)=\frac{x_0}{\int_{\Omega} (r^2+s^2)\, \operatorname{d} r \,\operatorname{d} s} (x^2+y^2),
\end{equation}
and $T=15$. First, in Figure~\ref{fig:figure2}, we can see a comparison between the 
solution of the $0$-dimensional model and the $2$-dimensional model. We 
observe that the evolution of the spacial mean 
of the $2$-d model solution coincides with the $0$-d model solution, which 
was expected in view of the fact that equation~\eqref{eq:modelmean} has to 
verify the spatial mean of the $2$-d solution. 
\begin{figure}[H]
 \centering
 \includegraphics[width=.8\linewidth]{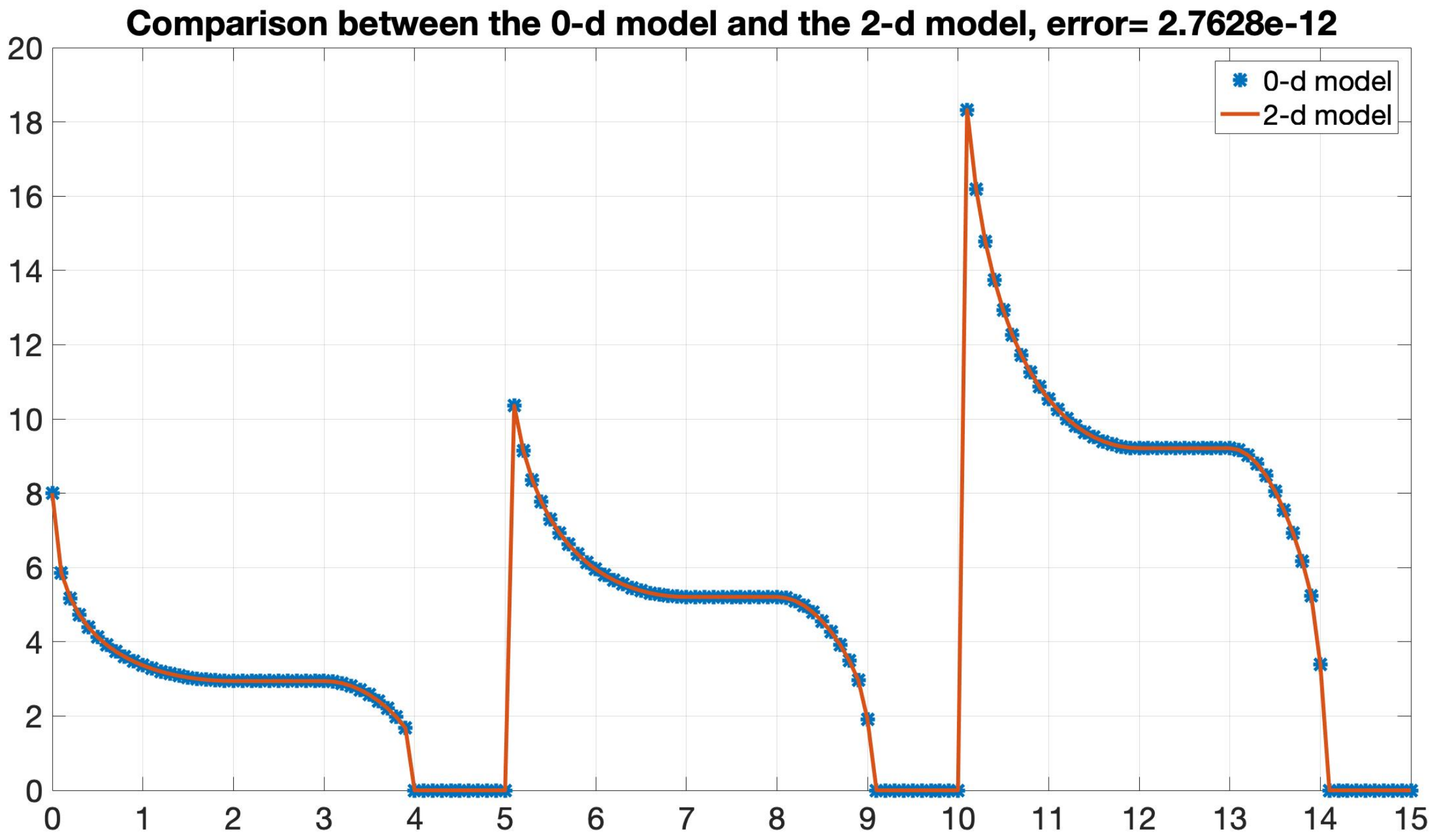}
 \caption{Comparison between the 
 solution of the $0$-d model and the $2$-d model.}
 \label{fig:figure2}
\end{figure}
Secondly, in Figures~\ref{fig:figure3},~\ref{fig:figure4} and~\ref{fig:figure5} we can 
observe, respectively, the initial condition and the solution in the first ($t=5$) and 
second impulse ($t=10$).
\begin{figure}[H]
 \begin{subfigure}{.32\textwidth}
 \centering
 \includegraphics[width=1.\linewidth]{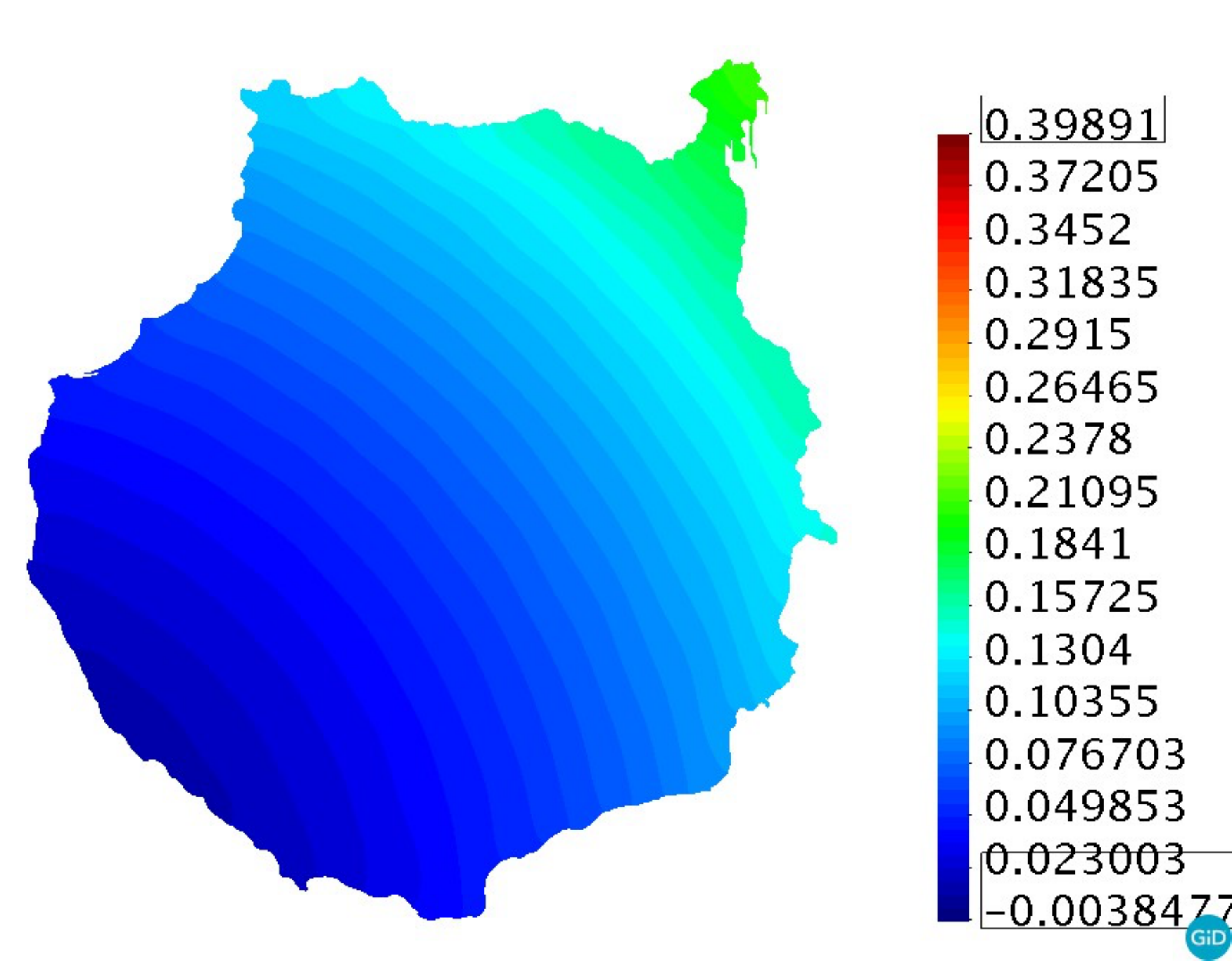}
 \caption{Initial condition.}
 \label{fig:figure3}
 \end{subfigure}
 \begin{subfigure}{.32\textwidth}
 \centering
 \includegraphics[width=1.\linewidth]{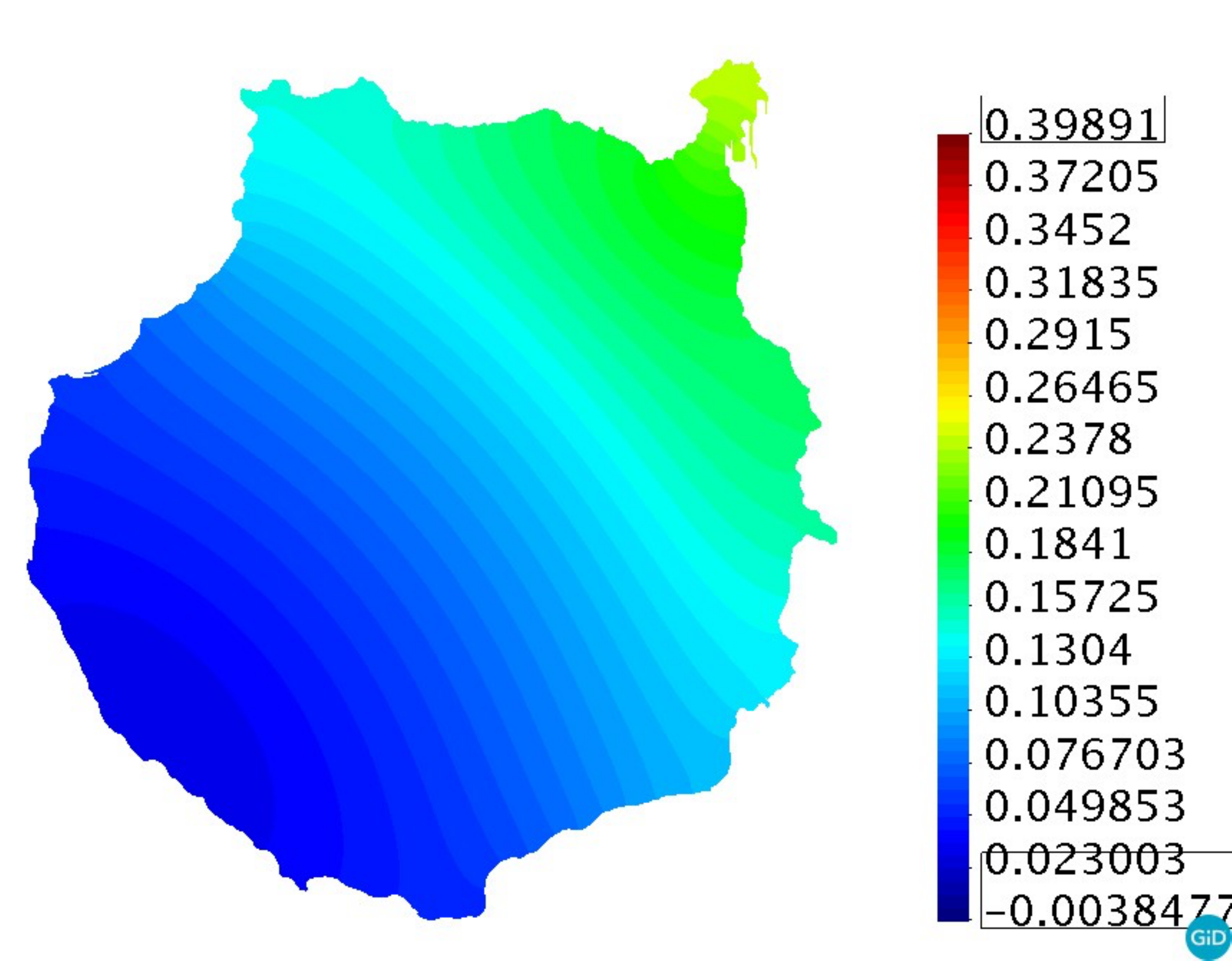}
 \caption{Solution at time $t=5$.}
 \label{fig:figure4}
 \end{subfigure}
\begin{subfigure}{.32\textwidth}
 \centering
 \includegraphics[width=1.\linewidth]{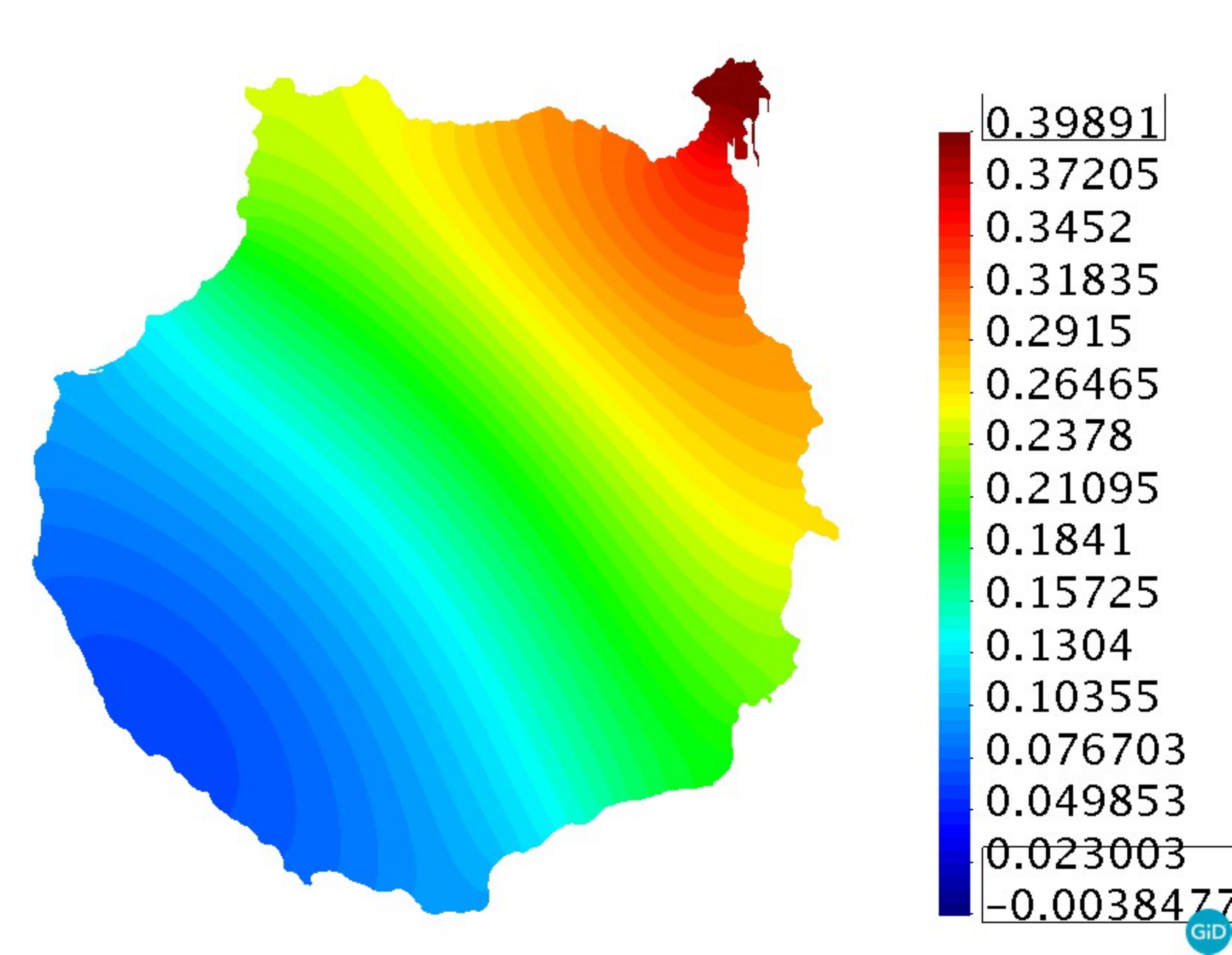}
 \caption{Solution at time $t=10$.}
 \label{fig:figure5}
\end{subfigure}
\caption{Evolution of the model from the initial condition to time $t=10$.}
\end{figure}

\bibliography{Bibliografia}

\end{document}